\numberwithin{equation}{section}
\newtheorem{theorem}{Theorem}
\newtheorem{lemma}{Lemma}[section]
\newtheorem{proposition}[theorem]{Proposition}
\newtheorem{rem}{Remark}[section]
\newtheorem{hyp}{Assumption}
\renewenvironment{proof}[1][Proof]{\begin{trivlist}
\item[\hskip \labelsep {\bfseries #1}]}{\qed\end{trivlist}}
\DeclareMathOperator{\p}{\mathbb{P}}
\newcommand{\ind}{\mathbf{1}}
\renewcommand{\ge}{\geq}
\renewcommand{\le}{\leq}
\newcommand{\R}{\mathbb{R}}
\newcommand{\Z}{\mathbb{Z}}
\newcommand{\N}{\mathbb{N}}
\renewcommand{\tilde}{\widetilde}
\renewcommand{\hat}{\widehat}
\DeclareMathSymbol{\leqslant}{\mathalpha}{AMSa}{"36} 
\DeclareMathSymbol{\geqslant}{\mathalpha}{AMSa}{"3E} 
\DeclareMathSymbol{\eset}{\mathalpha}{AMSb}{"3F}     
\renewcommand{\leq}{\;\leqslant\;}                   
\renewcommand{\geq}{\;\geqslant\;}                   
\newcommand{\dd}{\,\text{\rm d}}             
\newcommand{\sumtwo}[2]{\sum_{\substack{#1 \\ #2}}} 
\newcommand{\partiald}[1]{\frac{\partial}{\partial #1}}
\newcommand{\cA}{{\ensuremath{\mathcal A}} }
\newcommand{\cB}{{\ensuremath{\mathcal B}} }
\newcommand{\cH}{{\ensuremath{\mathcal H}} }
\newcommand{\cC}{{\ensuremath{\mathcal C}} }
\newcommand{\cN}{{\ensuremath{\mathcal N}} }
\newcommand{\cZ}{{\ensuremath{\mathcal Z}} }
\newcommand{\cI}{{\ensuremath{\mathcal I}} }
\newcommand{\cJ}{{\ensuremath{\mathcal J}} }
\newcommand{\bP}{{\ensuremath{\mathbf P}} }
\newcommand{\bE}{{\ensuremath{\mathbf E}} }
\newcommand{\bbE}{{\ensuremath{\mathbb E}} }
\newcommand{\bbN}{{\ensuremath{\mathbb N}} }
\newcommand{\bbP}{{\ensuremath{\mathbb P}} }
\newcommand{\bbR}{{\ensuremath{\mathbb R}} }
\newcommand{\bbZ}{{\ensuremath{\mathbb Z}} }
\newcommand{\ga}{\alpha}
\newcommand{\gb}{\beta}
\newcommand{\gd}{\delta}
\newcommand{\gep}{\varepsilon}       
\newcommand{\gz}{\zeta}
\newcommand{\gD}{\Delta}
\newcommand{\go}{\omega}
\newcommand{\gto}{{\tilde\omega}}
\newcommand{\gl}{\lambda}
\newcommand{\gt}{\theta}
\newcommand{\gU}{\Upsilon}
\newcommand{\tf}{\mathtt{F}}
\renewcommand{\P}{{\ensuremath{\mathbf P}} }
\newcommand{\E}{{\ensuremath{\mathbf E}} }
\renewcommand{\a}{ \mathrm{a}}
\newcommand{\cpin}{\mathbf{C_{\rho}^{\rm pin}}}
\newcommand{\ccop}{\mathbf{C_{\rho}^{\rm cop}}}
\newcommand{\cop}{\mathrm{cop}}
\newcommand{\pin}{\mathrm{pin}}
\newcommand{\oJ}{\overset{\circ}{J}}
\definecolor{light-gray}{gray}{0.5}
\def\f{\textsc{f}}
\title[Critical curve for correlated pinning and copolymer models]{On the critical curves of the Pinning and Copolymer models in Correlated Gaussian environment}
\author[Q.Berger]{Quentin Berger}
\address{Department of Mathematics, KAP 108\\
University of Southern California\\
Los Angeles, CA  90089-2532 USA}
\email{qberger@usc.edu}
\author[J.Poisat]{Julien Poisat}
\address{
Mathematical Institute,
Leiden University\\
 P.O. Box 9512, NL-2300 RA Leiden\\
  The Netherlands
}
\email{poisatj@math.leidenuniv.nl}
\date{\today}
\keywords{Pinning Model, Copolymer Model, Critical Curve, Fractional Moments, Coarse Graining, Correlations}
\subjclass{Primary: 82B44; Secondary: 82D60, 60K35}
\begin{document}
\begin{abstract}

We investigate the disordered copolymer and pinning models, in the case of a correlated Gaussian environment with summable correlations, and when the return distribution of the underlying renewal process has a polynomial tail. As far as the copolymer model is concerned, we prove disorder relevance both in terms of critical points and critical exponents, in the case of non-negative correlations. When some of the correlations are negative, even the annealed model becomes non-trivial. Moreover, when the return distribution has a finite mean, we are able to compute the weak coupling limit of the critical curves for both models, with no restriction on the correlations other than summability. This generalizes the result of Berger, Caravenna, Poisat, Sun and Zygouras \cite{cf:BCPSZ} to the correlated case. Interestingly, in the copolymer model, the weak coupling limit of the critical curve turns out to be the maximum of two quantities: one generalizing the limit found in the IID case \cite{cf:BCPSZ}, the other one generalizing the so-called Monthus bound.
\end{abstract}

\maketitle

\section{Introduction}

In this paper we denote by $\bbN$ the set of positive integers, and $\bbN_0 = \bbN \cup \{0\}$.

\subsection{The copolymer and pinning models}

We briefly present here a general version of the models. For a more complete overview and physical motivations, we refer to \cite{cf:CGG,cf:G1,cf:G2,cf:dH}.
\paragraph{\it Renewal sequence.} Let $\tau=(\tau_i)_{i\geq 0}$ be a renewal process whose law is denoted by $\bP$: $\tau_0:=0$, and the
$(\tau_i-\tau_{i-1})_{i\geq 1}$'s are IID $\bbN$-valued random variables. The set $\tau=\{\tau_0,\tau_1,\ldots\}$ (with a slight abuse of notation) represents the set of contact points of the polymer with the interface, and the intervals $(\tau_{i-1},\tau_i]$ are referred as excursions of the polymer away from the interface.
We assume that the renewal process is recurrent, and that its inter-arrival distribution verifies
\begin{equation}
\label{defK}
K(n):=\bP(\tau_1=n) =\frac{\varphi(n)}{n^{1+\ga}}, \quad \text{for } n\in\bbN,
\end{equation}
where $\ga\in[0,+\infty)$, and $\varphi:(0,\infty)\to(0,\infty)$ is a slowly varying function (see \cite{cf:Bingham} for a definition).
 We also denote by $\{n\in\tau\}$ the event that there exists $k\in\N_0$ such that $\tau_k = n$ and we write $\delta_n = \ind_{\{n\in\tau\}}$.

For the copolymer model, one also has to decide whether the excursions are above or below the interface.
Take $(X_k)_{k\geq1}$ a sequence of IID Bernoulli random variables with parameter $1/2$, independent of the sequence $\tau$, whose law will be denoted by $\bP^X$:
if $X_k=1$, we identify the $k^{\rm th}$ excursion to be below the interface.
Then, we set $\gD_n :=X_k$ for all $n\in(\tau_{k-1}, \tau_k]$, so that $\Delta_n$ is the indicator function that the $n^{\rm th}$ monomer is below the interface.
The sequences $\tau$ and $X$ (with joint law $\bP\otimes \bP^X$) therefore describe the random shape of a polymer.  From now on, we write $\bP$ instead
of $\bP \otimes \bP^X$, for conciseness.\\

\paragraph{\it Disorder sequence.} Let $\go=(\go_n)_{n\geq 0}$ be a centered and unitary Gaussian stationary sequence,
whose law is denoted by $\bbP$: $\omega_n$ is the (random) charge at the $n^{\rm th}$ monomer. Its correlation function is $\rho_{n}:=\bbE[\go_0 \go_{n}]$, defined for $n\in\bbZ$, with $\rho_{-n}=\rho_n$.
The assumption that $\bbE[\go_0]=0$ and $\bbE[\go_0^2]=\rho_0=1$ is just a matter of renormalization, and do not hide anything deep.
For notational convenience, we also write $\gU:=(\rho_{ij})_{i,j\geq0}$ the covariance matrix, where $\rho_{ij}:= \bbE[\go_i \go_{j}]=\rho_{|j-i|}$, and $\gU_k$ the covariance matrix of the Gaussian vector $(\go_1,\ldots,\go_k)$.
An example of valid choice for a correlation structure is $\rho_k=(1+k)^{-a}$ for all $k\geq 0$, with $a>0$ a fixed constant, since it is convex, cf. \cite{cf:Polya}.

\begin{hyp}\rm
\label{hyp:correlations}
We assume that correlations are \emph{summable},
that is $\sum_{n\in\bbZ} |\rho_n|<+\infty$, and we define the constant $\gU_{\infty}:=\sum_{n\in\bbZ}\rho_n$. This means that $\gU$ is a bounded operator.
We also make the additional technical assumption that $\gU$ is invertible.
\end{hyp}

Note that Assumption \ref{hyp:correlations} implies that $\lim_{n\to\infty}\rho_n=0$, which entails ergodicity of $\go$, see \cite[Ch.\ 14 \S2,
Th.\ 2]{cf:Cornfeld}. For the choice $\rho_k=(1+k)^{-a}$, Assumption \ref{hyp:correlations} corresponds to having $a>1$.

\begin{rem}\rm
\label{rem:gUinfty}
The condition that $\gU$ is invertible is a bit delicate, and enables us to get uniform bounds on the eigenvalues of $\gU_k$ and $\gU_k^{-1}$. Indeed, $\gU$ is
a bounded and invertible operator on the Banach space $\ell_1(\bbN)$, so that $\gU^{-1}$ is a
bounded operator. Therefore, Assumption \ref{hyp:correlations} yields that the eigenvalues of $\gU_k$ are uniformly bounded away from $0$.
For example, one has
\begin{equation}
\label{defgUinfty}
\gU_{\infty}:=\lim_{k\to\infty} \frac{\langle \gU_k\ind_k ,\ind_k\rangle}{\langle \ind_k ,\ind_k\rangle}
=\sum_{k\in\bbZ} \rho_k>0,
\end{equation}
where $\langle \cdot\, ,\, \cdot \rangle$ denotes the usual Euclidean scalar product, and $\mathbf{1}_k$ is the vector constituted of $k$ $1$'s and then of $0$'s.

A simple case when $\gU$ is invertible is when $1=\rho_0 > 2 \sum_{k\in\bbN} |\rho_k|$: it is then diagonally dominant. More generally, one has to consider the
Laurent series associated to the Toeplitz matrix $\gU$, namely $f(\lambda) = 1+2\sum_{k\in\bbN} \rho_n \cos (\lambda n)$ (we used that $\rho_0=1$, and that
$\rho_{-n}=\rho_n$).
Then, the fundamental eigenvalue distribution theorem of Szeg\"o \cite[Ch. 5]{cf:Szego} tells that the Toeplitz operator $\gU$ is invertible if and only if $\min_{\lambda\in[0,2\pi]} f(\lambda)>0$. 
For example, if $\rho_0=1, \rho_1=1/2$ and $\rho_k=0$ for $k\geq 2$, then Assumption \ref{hyp:correlations} is not verified.
\end{rem}

\smallskip
{\bf The copolymer model.} For a fixed sequence $\go$ (quenched disorder) and parameters $\gl\in \bbR^+$, $h\in\bbR$, and $N\in \bbN$, define the following Gibbs measures
\begin{equation}\label{def:gibbs_meas_cop}
\frac{\dd \bP_{N,\gl,h}^{\go,\cop}}{\dd \bP}:= \frac{1}{Z_{N,\gl,h}^{\go,\cop}} \exp\bigg( -2\gl \sum_{n=1}^N  (\go_n+h)\gD_n\bigg)\gd_N\, ,
\end{equation}
with the partition function used to normalize the measure to a probability measure,
\begin{equation}\label{def:part_fct_cop}
Z_{N,\gl,h}^{\go,\cop}:=
 \E\bigg[\exp\bigg( -2\gl \sum_{n=1}^N  (\go_n+h)\gD_n\bigg) \gd_N\bigg].
\end{equation}
This measure corresponds to giving a penalty/reward (depending on its sign) $\go_n+h$ if the $n^{\rm th}$ monomer is below the interface.

One then defines the free energy of the system.
\begin{proposition}[cf.\ \cite{cf:G1}, Theorem 4.6]
The following limit exists and is constant $\bbP$-a.s.
\begin{equation}
\tf^{\cop}(\gl,h):= \lim_{N\to\infty}\frac1N \log Z_{N,\gl,h}^{\go,\cop} = \lim_{N\to\infty}\frac1N \bbE \log Z_{N,\gl,h}^{\go,\cop}.
\end{equation}
It is called the \emph{quenched} free energy of the system. The map $h\mapsto \tf^{\cop}(\gl,h)$ is non-negative, non-increasing and convex. There exists a
\emph{quenched} critical point $h_c^{\cop}(\gl):= \inf\{h\, ;\, \tf^{\cop}(\gl,h)=0\}$, such that $\tf^{\cop}(\gl,h)>0$ if and only if $h<h_c^{\cop}(\gl)$.
\end{proposition}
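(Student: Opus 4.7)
My plan is to combine super-additivity of $\bbE[\log Z_N^{\go,\cop}]$ with Gaussian concentration to obtain the almost-sure limit, and then read off the analytic properties of $h\mapsto\tf^{\cop}(\gl,h)$ from those of the pre-limit $h\mapsto\tfrac{1}{N}\log Z_N^{\go,\cop}$.

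For the averaged free energy, I restrict $\tau$ to trajectories with $N\in\tau$; by the renewal property of $\tau$ and the independence of the Bernoulli sequence $(X_k)_{k\geq 1}$, this yields
\begin{equation*}
Z_{N+M}^{\go,\cop}\;\geq\; Z_{N}^{\go,\cop}\cdot Z_{M}^{\theta_N\go,\cop},
\end{equation*}
where $\theta_N$ denotes the shift acting on $\go$. Taking $\log$ and $\bbE$, and using \emph{only} the stationarity of $\go$, one gets $\bbE[\log Z_{N+M}]\geq \bbE[\log Z_N]+\bbE[\log Z_M]$. Finiteness comes from the annealed upper bound $\bbE[\log Z_N]\leq \log\bbE[Z_N]\leq CN$, obtained by computing the Gaussian Laplace transform and bounding $\mathrm{Var}\bigl(\sum_n\omega_n\gD_n\bigr)=\gD^\top\gU_N\gD\leq \|\gU_N\|_{\mathrm{op}} N\leq CN$ under Assumption~\ref{hyp:correlations}. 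Fekete's lemma then gives $\tfrac{1}{N}\bbE[\log Z_N]\to \tf^{\cop}(\gl,h)\in\bbR$.

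To upgrade this to a.s.\ convergence, I use Gaussian concentration. Writing $\go=\gU_N^{1/2}\xi$ with $\xi\sim\cN(0,I_N)$ and differentiating, $|\nabla_\go \log Z_N|\leq 2\gl\sqrt{N}$, so $\xi\mapsto\log Z_N$ is Lipschitz with constant at most $2\gl\sqrt{N\,\|\gU_N\|_{\mathrm{op}}}\leq C\sqrt{N}$. The uniform bound $\|\gU_N\|_{\mathrm{op}}\leq \sum_{k\in\bbZ}|\rho_k|<\infty$ follows from Young's inequality for the Toeplitz action on $\ell^2$, and is precisely what Assumption~\ref{hyp:correlations} delivers. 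The Borell--Sudakov--Tsirelson inequality then gives $\bbP(|\log Z_N-\bbE[\log Z_N]|>Nt)\leq 2e^{-cNt^2}$, and Borel--Cantelli closes the argument.

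Finally, the properties of $h\mapsto\tf^{\cop}(\gl,h)$ are inherited from the pre-limit. Direct differentiation gives $\partial_h\log Z_N=-2\gl\,\E_{N,\gl,h}^{\go,\cop}[\sum_n\gD_n]\leq 0$ and $\partial_h^2\log Z_N=4\gl^2\,\Var_{N,\gl,h}^{\go,\cop}(\sum_n\gD_n)\geq 0$, so monotonicity and convexity pass to the limit. Non-negativity follows by restricting to a single large excursion above the interface ($\tau_1=N$ and $X_1=0$): $Z_N\geq \tfrac{1}{2}K(N)$, hence $\tfrac{1}{N}\log Z_N\geq \tfrac{\log K(N)}{N}\to 0$. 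Therefore $\{h:\tf^{\cop}(\gl,h)=0\}$ is a half-line $[h_c^{\cop}(\gl),+\infty)$, nonempty because the annealed bound forces the limit to $0$ for $h$ large, and the strict inequality $\tf^{\cop}>0$ for $h<h_c^{\cop}$ is immediate from the definition of the infimum. The only conceptual obstacle compared with the IID setting is to notice that super-additivity needs only stationarity of $\go$, whereas correlations enter solely in the concentration step and are tamed uniformly in $N$ by Assumption~\ref{hyp:correlations}.
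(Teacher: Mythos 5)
Your proposal is correct and follows the standard route that the cited reference (Giacomin's book, Thm.\ 4.6) uses for the IID case, with exactly the right adaptation to the correlated setting: super-additivity of $\bbE[\log Z_N]$ via the renewal restriction $\{N\in\tau\}$ (which, as you note, uses only stationarity of $\go$), Fekete for convergence of the means, and Gaussian concentration (Borell--TIS) to upgrade to $\bbP$-a.s.\ convergence, with the uniform Lipschitz bound coming from $\|\gU_N\|_{\mathrm{op}}\leq\sum_{k\in\bbZ}|\rho_k|<\infty$. That last operator-norm control is precisely the point where summability enters, and you isolate it correctly. One small remark: for this proposition you only need the summability half of Assumption~\ref{hyp:correlations}; the invertibility of $\gU$ (lower bound on eigenvalues) is not used here and only becomes relevant later in the change-of-measure estimates. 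An alternative standard route, which avoids concentration entirely, is to invoke Kingman's subadditive ergodic theorem together with the ergodicity of $\go$ (guaranteed by $\rho_n\to 0$, as noted after Assumption~\ref{hyp:correlations}); the concentration route you chose has the mild advantage of being quantitative and self-contained in the Gaussian case. The passage of monotonicity, convexity, non-negativity and the identification of the critical point are all handled correctly.
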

Since it does not change the value of the free energy, we work in some places with the free version of the model, which is obtained by removing the
constraint $\{N\in \tau\}$ in (\ref{def:gibbs_meas_cop}) and (\ref{def:part_fct_cop}).

A straightforward computation shows that $\partial_h \tf^{\cop}(\gl,h)$ is the limiting fraction of monomers below the interface under the measure
$\bP_{N,\gl,h}^{\go,\cop}$ (and $\partial_h \tf^{\cop}(\gl,h)$ exists for $h<h^\cop_c(\gl)$, see \cite{cf:GT2}; differentiability at the critical point is a
consequence of the smoothing inequality, see Proposition \ref{pr:smoothcop}).
Therefore, the critical point $h^\cop_c(\gl)$ marks the transition between a delocalized phase ($\partial_h \tf^{\cop}=0$), where most of the monomers lie
above the interface, and a localized phase ($\partial_{h} \tf^{\cop}>0$), where the polymer sticks around the interface.

One also introduces the annealed counterpart of the model, to be compared with the quenched one.
The annealed partition function is
\begin{equation}
\label{eq:annCop}
\bbE\big[ Z_{N,\gl,h}^{\go,\cop} \big]=
 \E\bigg[\exp\bigg( -2\gl h\sum_{n=1}^N  \gD_n  + 2\gl^2 \sum_{n,m=1}^N
      \rho_{nm} \gD_n\gD_m\bigg)\gd_N \bigg] \, ,
\end{equation}
and the annealed free energy is
\begin{equation}
\tf^{\cop}_\a(\gl,h):=\lim_{N\to\infty} \frac{1}{N} \log \bbE \big[Z_{N,\gl,h}^{\go,\cop}\big] \geq 0.
\end{equation}
The existence of this limit
(which is a non-trivial fact if correlations can be negative) can be proved using Hammersley's approximate subadditive lemma (Theorem~2 in
\cite{cf:H}). We refer to Proposition 2.1 in \cite{cf:P1} for a detailed proof in the context of the correlated pinning model.
The annealed critical point is then defined as $h_\a^{\cop}(\gl):=\inf\{h\, ; \, \tf^{\cop}_\a(\gl,h)=0\}$.
Moreover, a simple application of Jensen's inequality gives 
\begin{equation}
\label{eq:cop_annbound}
\tf^{\cop}(\gl,h)\leq \tf^{\cop}_\a(\gl,h),\quad \mbox{so that}\quad h_c^{\cop}(\gl)\leq
h_\a^{\cop}(\gl).
\end{equation}

\smallskip
{\bf The pinning model.}
The pinning model follows similar definitions, that we state very briefly. Although the parametrization we use is a bit different than that of the copolymer model, it is conform to the existing literature.

For a fixed sequence $\go$ (quenched disorder) and parameters $\gb\in\bbR^+$, $h\in\bbR$, one defines the Gibbs measures
\begin{equation}
\frac{\dd \bP_{N,\gb,h}^{\go,\pin}}{\dd \bP}:= \frac{1}{Z_{N,\gb,h}^{\go,\pin}} \exp\bigg(  \sum_{n=1}^N  (\gb\go_n+h)\gd_n\bigg)\gd_N \, ,
\end{equation}
where the partition function is
\begin{equation}
Z_{N,\gb,h}^{\go,\pin}:=
 \E\bigg[\exp\bigg(  \sum_{n=1}^N  (\gb\go_n+h)\gd_n\bigg)\gd_N  \bigg],
\end{equation}
and which corresponds to giving a reward/penalty $\gb\go_n+h$ if the polymer touches the defect line at site $n$.
The \emph{quenched} free energy is defined as the $\bbP$-a.s limit
\begin{equation}
\tf^{\pin}(\gb,h):=\lim_{N\to\infty} \frac{1}{N} \log 
Z_{N,\gb,h}^{\go,\pin}=\lim_{N\to\infty} \frac{1}{N} \bbE\log  Z_{N,\gb,h}^{\go,\pin}\geq 0,
\end{equation}
and the \emph{quenched} critical point
$h_c^{\pin}(\gb):=\sup\{h\, ;\, \tf^{\pin}(\gb,h)=0\}$ separates a delocalized phase ($h< h_c^{\pin}(\gb)$, $\tf^{\pin}(\gb,h)=0$) and a localized phase
($h>h_c^{\pin}(\gb)$, $\tf^{\pin}(\gb,h)>0$).

One also defines the annealed free energy $\tf_\a^{\pin}(\gb,h):=\lim_{N\to\infty} \frac{1}{N} \log \bbE \big[ Z_{N,\gb,h}^{\go,\pin} \big]$, and the annealed critical
point $h_\a^{\pin}(\gb):=\sup\{h\, ;\, \tf_\a^{\pin}(\gb,h)=0\}$. Analogously to the copolymer model, $h_c^{\pin}(\gl)\geq h_\a^{\pin}(\gl)$.

\begin{rem}\rm
The choice of a Gaussian structure for the disorder $\go$ is very natural, and in addition, is essential. In the Gaussian case, the two-point correlation
function is enough to describe the whole correlation structure and to compute explicitly exponential moments. In particular, it allows us to get
an explicit annealed model, see \eqref{eq:annCop}, which is a central tool in this work. We also stress that when correlations are not summable, the
annealed model is degenerate, and the quenched free energy is always positive. We refer to \cite{cf:B13,cf:B13bis} for an explanation of this so-called
\emph{infinite disorder} phenomenon in the pinning model (the copolymer model follows the same features). This degenerate behavior is actually due to more
complex properties of the correlation structure (cf.\ Definition 1.5 in \cite{cf:B13bis}), and avoiding this issue is another reason to restrict to the Gaussian
case.
\end{rem}

\subsection{The main results}
A question of importance in these two models is that of the influence of disorder: one compares the characteristics of the quenched and annealed models, to see if they differ. In the copolymer and pinning models, this question is addressed both in terms of critical points and in terms of the order of the phase transition (that is, the lack of regularity of the free energy at the critical point).
When disorder is relevant, the question of the weak-coupling asymptotic behavior of the quenched critical point is also investigated.
We present here results on disorder relevance for both pinning and copolymer models. For each model, we give a short overview of the existing literature, and expose our results. 

\subsubsection{The Copolymer Model}
So far, the copolymer model has been studied only in the case of an IID sequence $\go$. In that case, disorder has been shown to be relevant for all $\ga>0$.
Indeed, the annealed phase transition is trivially of order $1$ whereas the quenched phase transition is, by the smoothing inequality \cite{cf:GT}, at least of order $2$. Moreover, it has been shown in \cite{cf:BGLT, cf:T1} that $h_c^{\cop}(\gl)< h_\a^{\cop}(\gl)$ for all $\gl>0$ . Much attention
has then been given to the weak coupling behavior ($\gl\downarrow 0$) of the critical curve.

In \cite{cf:BdH}, Bolthausen and den Hollander focused on the special case where the underlying renewal is given by the return times of the simple symmetric
random walk on $\Z$ (where $\ga = 1/2$), and where the $\omega_n$'s are IID, $\{\pm 1\}$-valued and symmetric.
They proved the existence of a \emph{continuum copolymer model}, in which the random walk is replaced by a Brownian motion and the disorder sequence $\omega$ by
white noise, as a scaling limit of the discrete model. They showed in particular that the slope of the critical curve $\lim_{\gl\downarrow 0}
h_c^{\cop}(\gl)/\gl$ exists, and is equal to the critical point of the continuum model.
This result has been extended by Caravenna and Giacomin~\cite{cf:CG} for the general class of copolymer models that we consider in this paper, with
$\ga\in(0,1)$: the slope of the critical curve exists, and is the critical point of a suitable $\alpha$-continuum copolymer model. In particular, the slope is
shown to be a \emph{universal quantity}, depending only on $\alpha$, and not on the fine details of the renewal process $\tau$ or on the law of the disorder
$\omega$. We then define, at least for $\ga\in(0,1)$,
\begin{equation} \label{eq:slope}
	m_\alpha \,:=\, \lim_{\gl\downarrow 0} \frac{h_c^{{\rm cop}}(\lambda)}{\gl} \,.
\end{equation}

The value of $m_\alpha$ has been the subject of many investigations and debates the past few years. In \cite{cf:M}, Monthus conjectured that $m_{1/2} = 2/3$, and a generalization of her non-rigorous renormalization argument predicts $m_\alpha = 1/(1+\ga)$. Bodineau and Giacomin \cite{cf:BG} proved the lower bound $m_\alpha \ge 1/(\ga+1)$, for every $\alpha \ge 0$. Monthus' conjecture was ruled out first by Bodineau, Giacomin, Lacoin and Toninelli \cite[Theorem 2.9]{cf:BGLT} for $\ga\geq 0.801$, and more recently by Bolthausen, den Hollander and Opoku \cite{cf:BdHO} for $\ga>0$. We also refer to \cite{cf:BGLT} for earlier, partial results, and \cite{cf:CGG} for a numerical study in the case $\ga=1/2$.

The case $\ga>1$ was not considered until recently, in particular
because no non-trivial continuum model is expected to exist, due to the finite mean of the excursions of the renewal process. However, it was proved recently in
\cite{cf:BCPSZ} that the slope $m_{\ga}$ exists also for $\ga>1$, and the exact value was found to be $m_{\ga}=\frac{2+\ga}{2(1+\ga)}$. This answered a conjecture of Bolthausen, den Hollander and Opoku \cite{cf:BdHO}, who had already proved the matching lower bound for the slope.

\medskip
We now turn to the correlated version of the copolymer model. The annealed model already presents some surprising features. When correlations are non-negative, it is still a trivial model to study, but the case with negative correlations is challenging, and more investigation would be needed (see Remark \ref{rem:negativecopolann}). Propositions \ref{prop:copann} to \ref{pr:smoothcop} are valid for all $\lambda>0$ whereas Theorems \ref{thm:copolrel} and \ref{thm:gapcop} deal with the weak-coupling regime ($\lambda\downarrow 0$).

\begin{proposition}
\label{prop:copann}
 If correlations are non-negative, then for any $\lambda\in \bbR^+$ and $h\in\bbR$, the annealed free energy is
\begin{equation}
\tf_\a^{\cop}(\gl,h) = 2\lambda \big( \gU_{\infty}\lambda -h\big)_+ \, ,
\end{equation}
where we used the notation $x_+ = \max(x,0)$.
Therefore, the annealed critical point is
\begin{equation}\label{eq:copanncc}
h_{\a}^{\cop}(\gl) = \gl \gU_{\infty},
\end{equation}
and the annealed phase transition is of order $1$.
\end{proposition}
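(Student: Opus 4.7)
The strategy is a two-sided bound for $\tfrac{1}{N}\log\bbE[Z_{N,\gl,h}^{\go,\cop}]$ using the explicit annealed expression \eqref{eq:annCop}. The non-negativity assumption on $(\rho_n)$ is used only in the upper bound, while the lower bound works in great generality.

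\textbf{Upper bound.} Starting from \eqref{eq:annCop}, since $\rho_{nm}\geq 0$ and $\gD_m\in\{0,1\}$, I bound for each fixed $n$:
\[
\sum_{m=1}^{N}\rho_{nm}\gD_m \;\leq\; \sum_{m=1}^{N}\rho_{nm} \;\leq\; \sum_{k\in\bbZ}\rho_k \;=\;\gU_\infty.
\]
Using this together with $\gD_n\leq 1$ inside \eqref{eq:annCop} yields
\[
\bbE\big[Z_{N,\gl,h}^{\go,\cop}\big]\;\leq\;\E\Big[\exp\Big(\big(2\gl^{2}\gU_\infty-2\gl h\big)\sum_{n=1}^{N}\gD_n\Big)\gd_N\Big]\;\leq\;\exp\!\big(2\gl(\gl\gU_\infty-h)_+ N\big),
\]
where in the last step I use $\sum_{n=1}^{N}\gD_n\leq N$ if $2\gl^{2}\gU_\infty-2\gl h\geq 0$, and simply drop the exponential (bounded by $1$) otherwise; $\gd_N$ is dropped using $\bP(N\in\tau)\leq 1$. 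Dividing by $N$ and passing to the limit gives $\tf^\cop_\a(\gl,h)\leq 2\gl(\gl\gU_\infty-h)_+$.

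\textbf{Lower bound.} Restrict the expectation in \eqref{eq:annCop} to the event that there is a single excursion of length $N$ lying below the interface, i.e.\ $\{\tau_1=N\}\cap\{X_1=1\}$, which has probability $\tfrac{1}{2}K(N)$. On this event $\gD_n\equiv 1$ for $n=1,\dots,N$, and hence
\[
\bbE\big[Z_{N,\gl,h}^{\go,\cop}\big]\;\geq\;\tfrac{1}{2}K(N)\exp\Big(-2\gl h N+2\gl^{2}\sum_{n,m=1}^{N}\rho_{nm}\Big).
\]
The key deterministic input is the Cesàro-type identity
\[
\sum_{n,m=1}^{N}\rho_{nm}\;=\;\sum_{|k|\leq N-1}(N-|k|)\rho_k\;=\;N\sum_{|k|\leq N-1}\rho_k-\sum_{|k|\leq N-1}|k|\rho_k,
\]
combined with the summability assumption, which yields $\tfrac{1}{N}\sum_{n,m=1}^{N}\rho_{nm}\to\gU_\infty$ (the second term is $o(N)$ by a standard truncation argument on $|k|$). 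Since $\tfrac{1}{N}\log K(N)\to 0$ by \eqref{defK}, taking logarithms, dividing by $N$ and passing to the limit gives $\tf^\cop_\a(\gl,h)\geq 2\gl(\gl\gU_\infty-h)$. Combined with the trivial bound $\tf^\cop_\a\geq 0$, this yields the matching lower bound $\tf^\cop_\a(\gl,h)\geq 2\gl(\gl\gU_\infty-h)_+$.

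\textbf{Consequences.} The two bounds give the claimed formula $\tf^\cop_\a(\gl,h)=2\gl(\gl\gU_\infty-h)_+$. The zero set of $h\mapsto\tf^\cop_\a(\gl,h)$ is $[\gl\gU_\infty,+\infty)$, so $h^\cop_\a(\gl)=\gl\gU_\infty$, and the map is affine with slope $-2\gl$ on the localized side, so the annealed transition is of order $1$. The only non-routine point is the Cesàro convergence in the lower bound, which is immediate from summability of $(\rho_n)$; everything else is direct from \eqref{eq:annCop}.
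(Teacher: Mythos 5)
Your proof is correct and follows essentially the same route as the paper's: the lower bound restricts to the event $\{\tau_1=N,\,X_1=1\}$ of a single excursion below the interface (with the Cesàro computation $\tfrac1N\sum_{n,m=1}^N\rho_{nm}\to\gU_\infty$ from summability), and the upper bound uses the non-negativity of correlations to bound $\sum_m\rho_{nm}\gD_m\le\gU_\infty$. The only small thing worth noting is that the trivial inequality $\tf^\cop_\a\geq 0$, which you invoke to get the positive part in the lower bound, should be justified in one line (e.g.\ by also restricting to $\{\tau_1=N,\,X_1=0\}$, as the paper does, or via $\tf^\cop_\a\geq\tf^\cop\geq 0$).
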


\begin{proof}
From \eqref{eq:annCop}, one has the easy lower bound, 
\begin{equation}
\bbE\big[ Z_{N,\gl,h}^{\go,\cop} \big] \geq \P(\tau_1=n, \gD_1=1)\exp \Big(2\gl N \Big( -h+\frac{\gl}{N} \sum_{n,m=1}^N
      \rho_{nm} \Big) \Big),
\end{equation}
as well as $\bbE\big[ Z_{N,\gl,h}^{\go,\cop} \big] \geq \P(\tau_1=n, \gD_1=0)$, which directly gives that
\begin{equation}\label{eq:lowerbound_Facop}
\tf_\a^{\cop}(\gl,h)\geq 2\gl \big(\gU_{\infty}\gl -h \big)_+, 
\end{equation} using in particular the assumption on the renewal \eqref{defK}. Note that this does not require the non-negativity assumption. For the upper bound, one uses that for $n\in\bbN$, $\sum_{m\geq1} \rho_{nm} \gD_m \leq \gU_\infty,$ which is valid only for non-negative correlations.
\end{proof}

Our next result is a general bound on the quenched critical curve, which is the analogous to that of Bodineau and Giacomin \cite{cf:BG} in the correlated case, with no restriction on the sign of correlations.
\begin{proposition}
\label{prop:BG}
For $\ga\geq 0$ and any $\gl>0$,
\begin{equation}
 \frac{\gU_{\infty}}{1+\ga} \leq \frac{h_c^{\cop}(\gl)}{\gl} \leq \frac{h_\a^{\cop}(\gl)}{\gl},
\end{equation}
and we stress that $\frac{h_a^{\cop}(\gl)}{\gl} \geq \gU_{\infty}$ (see \eqref{eq:lowerbound_Facop}), with equality when correlations are non-negative.
\end{proposition}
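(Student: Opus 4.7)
The upper bound is just \eqref{eq:cop_annbound}, and the refinement $h_a^\cop(\lambda)/\lambda \geq \Upsilon_\infty$ was already obtained in \eqref{eq:lowerbound_Facop} with no sign restriction. The only non-trivial task is therefore the lower bound $h_c^\cop(\lambda) \geq \lambda \Upsilon_\infty/(1+\alpha)$, which I would establish by adapting the rare-stretch strategy of Bodineau and Giacomin \cite{cf:BG} to the correlated Gaussian setting.

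Fix $c>0$ and a block length $B\in\bbN$, and call the block $I_i:=(iB,(i+1)B]$ \emph{good} if $S_B^{(i)}:=\sum_{n\in I_i}\omega_n<-cB$. By Assumption~\ref{hyp:correlations} and the ensuing ergodicity of $\omega$ (noted right after the assumption), the empirical density of good blocks tends $\bbP$-a.s.\ to $\rho(c,B):=\bbP(S_B<-cB)$; Gaussian tails for $S_B\sim\cN(0,\sigma_B^2)$, together with $\sigma_B^2/B\to\Upsilon_\infty$ (cf.\ Remark~\ref{rem:gUinfty}), yield $\log(1/\rho(c,B))\sim c^2 B/(2\Upsilon_\infty)$ as $B\to\infty$. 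I would then lower-bound $Z_N^{\omega,\cop}$ by restricting the $\bP$-expectation to the single trajectory whose renewal $\tau$ consists exactly of the endpoints of every good block and whose sign variable equals $1$ in each good block and $0$ in the long ``above'' excursions joining consecutive good ones. The energy contribution of each good block is then at least $2\lambda(c-h)B$, and the combinatorial cost of this trajectory is $\prod_j K(L_j)\cdot 2^{-k_{\mathrm{exc}}}$.

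Taking logarithms, dividing by $N$ and letting $N\to\infty$, then applying the ergodic theorem for good-block indicators and a renewal LLN for the gap lengths $T$ between consecutive good blocks (combined with Jensen's inequality $\bbE[\log T]\leq\log\bbE[T]=\log(1/\rho)$ to control $\sum_j\log K(T_j B)$), one arrives at
\begin{equation*}
\tf^\cop(\lambda,h)\;\geq\;\rho(c,B)\,\Big[\,2\lambda(c-h)\;-\;(1+\alpha)\,\tfrac{c^2}{2\Upsilon_\infty}\;-\;\epsilon_B\,\Big],\qquad \epsilon_B=O\!\Big(\tfrac{\log B}{B}\Big).
\end{equation*}
The bracket is a concave quadratic in $c$, maximized at $c^*=2\lambda\Upsilon_\infty/(1+\alpha)$ with value $2\lambda\big[\lambda\Upsilon_\infty/(1+\alpha)-h\big]$. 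Hence it is strictly positive for $B$ large as soon as $h<\lambda\Upsilon_\infty/(1+\alpha)$, yielding $\tf^\cop(\lambda,h)>0$ and therefore $h_c^\cop(\lambda)\geq\lambda\Upsilon_\infty/(1+\alpha)$.

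The step I expect to be the most delicate is the renewal LLN for gaps between good blocks, which is automatic in the IID setting of \cite{cf:BG} but requires some care here, since the good-block indicators inherit short-range correlations from $\omega$. Summability of $|\rho_n|$ (Assumption~\ref{hyp:correlations}) is precisely what ensures both ergodicity of $\omega$ and a strong enough asymptotic decorrelation of distant blocks to guarantee a non-degenerate limit law with finite mean $1/\rho$ for the gap distribution; it is also what makes the Gaussian variance grow linearly with coefficient $\Upsilon_\infty$. The $\Upsilon_\infty$ appearing in the final bound is therefore the correlated analogue of the unit Gaussian variance of the IID case.
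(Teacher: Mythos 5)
Your argument is correct, and it is a genuinely different (if cognate) implementation of the rare-stretch idea. The paper first establishes the abstract bound \eqref{eq:rarestretch}, with the good event $\cA$ defined by the \emph{local partition function} being close to $\exp\big(L\,\tf^\cop(\gl,h-a)\big)$, and estimates $\bbP(\cA)$ by tilting $\go$ by $-a$ (which makes $\cA$ typical) and computing the relative entropy $\frac{a^2}{2}\langle\gU_L^{-1}\ind_L,\ind_L\rangle\sim a^2L/(2\gU_\infty)$; the Monthus bound then drops out by plugging the trivial one-excursion estimate $\tf^\cop(\gl,h)\geq -2\gl h$ into \eqref{eq:borneinfh} and optimizing over $a$. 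You instead take a concrete Gaussian-deviation event $\{S_B<-cB\}$, read off its probability from Gaussian tails and $\sigma_B^2/B\to\gU_\infty$, and fold the below-interface contribution directly into the rare blocks, thereby merging the two ingredients of the paper (rare-stretch strategy plus elementary free-energy lower bound) into a single step. The quadratic in $c$ that you maximize is literally the same as the quadratic in $a$ appearing in \eqref{eq:borneinfh2}, with the same optimizer $c^*=2\gl\gU_\infty/(1+\ga)$ and the same value, so the two routes produce identical constants. What the paper's formulation buys is modularity: the very same inequality \eqref{eq:rarestretch} is reused verbatim to prove the smoothing inequality of Proposition~\ref{pr:smoothcop}, whereas your computation is specialized to the Monthus bound (though it is more elementary and self-contained). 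Finally, the point you flag as delicate — the LLN for gaps between good blocks — is legitimate but is handled exactly as in the paper: Jensen's inequality $\bbE[\log T]\leq\log\bbE[T]=\log(1/\rho)$ (the paper's $\bbE\log i_1\leq\log\bbE i_1=\log\bbP(\cA)^{-1}$), combined with ergodicity of $\go$ under the shift, already suffices; no independence or approximate independence of the good-block indicators is needed, so summability of $|\rho_n|$ plays no further role here beyond ensuring $\rho_n\to 0$ (ergodicity) and $\sigma_B^2\sim\gU_\infty B$.
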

The upper bound is standard and has been already pointed out in \eqref{eq:cop_annbound}. The lower bound is the so-called Monthus bound, adapted to the correlated case. Its proof is postponed to Section \ref{sec:lowcopol}. Note that if $\alpha =0$ and the correlations are non-negative we get $h_c^\cop(\gl) =h_\a^{\cop}(\gl)= \Upsilon_\infty \gl$, for all $\lambda>0$.

Another general result on the quenched copolymer model is the so-called smoothing inequality \cite{cf:GT}, which is also valid in the correlated case, without any restriction on the sign of the correlations.
\begin{proposition}
\label{pr:smoothcop}
For every $\gl\geq 0$ and $ \gd\geq 0$, one has
\begin{equation}
0\leq \tf(\gl,h_c^{\cop}(\gl)-\gd) \leq \frac{1+\alpha}{2\gU_{\infty}}\gd^2.
\end{equation}
\end{proposition}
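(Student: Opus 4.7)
The plan is to adapt the rare-stretch smoothing argument of Giacomin--Toninelli \cite{cf:GT} to the correlated Gaussian setting. The key novelty lies in the change-of-measure step, whose relative entropy involves the inverse Toeplitz matrix $\gU_\ell^{-1}$ rather than a simple multiple of $\ell$ as in the IID case.

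Fix $h = h_c^\cop(\gl) - \gd$ with $\gd > 0$ and write $\tf := \tf^\cop(\gl, h)$. For a block $J\subset\{1,\ldots,N\}$ of length $\ell$, I would introduce the tilted Gaussian law $\tilde\bbP_J$ with the same covariance as $\bbP$ but whose marginal mean on $J$ is $\gd\mathbf{1}_\ell$. A direct Cameron--Martin computation yields
$$H(\tilde\bbP_J \mid \bbP) \;=\; \frac{\gd^2}{2}\,\langle\mathbf{1}_\ell, \gU_\ell^{-1}\mathbf{1}_\ell\rangle.$$
By Assumption~\ref{hyp:correlations} and the Szegő-type asymptotics of Remark~\ref{rem:gUinfty} (with Laurent symbol $f(\lambda)=1+2\sum_n\rho_n\cos(\lambda n)$ satisfying $f(0)=\gU_\infty>0$ and bounded away from $0$), we have $\langle\mathbf{1}_\ell,\gU_\ell^{-1}\mathbf{1}_\ell\rangle/\ell\to 1/\gU_\infty$ as $\ell\to\infty$, hence $H(\tilde\bbP_J\mid\bbP)=(1+o(1))\,\ell\gd^2/(2\gU_\infty)$. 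Crucially, the tilt $\omega\mapsto\omega+\gd\mathbf{1}_J$ modifies the copolymer Hamiltonian on $J$ by $-2\gl\gd\sum_{n\in J}\gD_n$, which amounts to replacing $h$ by $h+\gd=h_c^\cop(\gl)$ on $J$: under $\tilde\bbP_J$, the polymer is effectively at the quenched critical point on the block.

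Next I would perform a renewal decomposition of $Z_N^\omega(h)$ according to whether $\tau$ visits $J$ or skips it in a single long excursion. Under $\tilde\bbP_J$, the ``visit'' contribution is controlled by the critical copolymer partition function on $J$, whose expected log is sub-extensive, while the ``skip'' contribution produces the factor $(1+\alpha)\log\ell$ arising from the sharp renewal tail $K(n)\sim n^{-(1+\alpha)}$ in \eqref{defK}. Combining this block estimate with Gaussian concentration of $\log Z_N^\omega$ around its mean $N\tf$ (using Lipschitz continuity of $\omega\mapsto\log Z_N^\omega$), together with the Donsker--Varadhan variational inequality
$$\tilde\bbE_J\bigl[\log Z_N^\omega(h)\bigr] \;\leq\; \log\bbE\bigl[Z_N^\omega(h)\bigr]+H(\tilde\bbP_J\mid\bbP),$$
one derives, after dividing by $N$ and isolating the $J$-dependent contributions, the key balance $\ell\,\tf \leq (1+\alpha)\log\ell + \ell\gd^2(1+o(1))/(2\gU_\infty) + o(N)$. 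Sending $N\to\infty$ first and then optimizing $\ell\asymp\gd^{-2}\log(1/\gd)$ so that the renewal and entropic terms match yields the stated bound.

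The main obstacle is the renewal/coarse-graining analysis that produces the sharp $(1+\alpha)$ factor: one must carefully track the boundary effects at $\partial J$, the sub-extensive corrections to the critical copolymer partition function on $J$, and the interplay between the tilt and the renewal structure. A secondary point is the uniformity of the Szegő asymptotic $\langle\mathbf{1}_\ell,\gU_\ell^{-1}\mathbf{1}_\ell\rangle/\ell\to 1/\gU_\infty$, which follows from the invertibility of $\gU$ and the uniform spectral bounds recorded in Remark~\ref{rem:gUinfty}.
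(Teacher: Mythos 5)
Your plan correctly identifies the Giacomin--Toninelli rare-stretch mechanism, the Cameron--Martin entropy $H(\tilde\bbP_J\mid\bbP)=\tfrac{\gd^2}{2}\langle\ind_\ell,\gU_\ell^{-1}\ind_\ell\rangle \sim \gd^2\ell/(2\gU_\infty)$, and the Szeg\H{o}-type asymptotic coming from Assumption~\ref{hyp:correlations}; all of this matches the paper, which in turn relies on \cite[Lemma A.1]{cf:B13} for the Toeplitz estimate. However, the balance you write down,
$\ell\,\tf \leq (1+\ga)\log\ell + \ell\gd^2(1+o(1))/(2\gU_\infty) + o(N)$,
is not the correct intermediate inequality and does not produce the stated bound: dividing by $\ell$ and sending $\ell\to\infty$ (or optimizing $\ell\asymp\gd^{-2}\log(1/\gd)$ as you suggest) gives $\tf\leq\gd^2/(2\gU_\infty)$, which drops the $(1+\ga)$ factor and would be strictly stronger than the Proposition (and false in general). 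The source of the error is that the renewal cost in the rare-stretch argument is not $(1+\ga)\log\ell$ (the cost of skipping a single block of size $\ell$) but $(1+\ga)\log\bbP(\cA)^{-1}$, where $\cA$ is the ``good block'' event and $\bbP(\cA)^{-1}\sim\exp\big(\ell\gd^2/(2\gU_\infty)\big)$ is the typical \emph{distance} between good blocks. The renewal penalty per visited block is thus $(1+\ga)\ell\gd^2/(2\gU_\infty)$ itself, which when balanced against the free-energy gain $\ell\,\tf^\cop(\gl,h-\gd)$ yields the $(1+\ga)$ directly, without any optimization over $\ell$.

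A secondary concern is your use of the Gibbs/Donsker--Varadhan inequality in the form $\tilde\bbE_J[\log Z_N^\go]\leq\log\bbE[Z_N^\go]+H(\tilde\bbP_J\mid\bbP)$: the right-hand side involves the \emph{annealed} partition function, whose free energy at $h=h_c^\cop(\gl)-\gd$ is strictly positive and of order $\gl\gd$, not the relevant quenched quantity. In the paper's argument the entropy inequality is applied in the opposite way, to lower-bound $\bbP(\cA)$ via $\bbP(\cA)\geq\tilde\bbP_L(\cA)\exp\!\big(-\tilde\bbP_L(\cA)^{-1}(H(\tilde\bbP_L\mid\bbP)+e^{-1})\big)$, after which the free-energy lower bound \eqref{eq:rarestretch} evaluated at $h=h_c^\cop(\gl)$ yields the Proposition. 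To repair your sketch, replace the block-scale renewal cost $(1+\ga)\log\ell$ with $(1+\ga)\frac{1}{L}\log\bbP(\cA)^{-1}$, which is precisely $(1+\ga)\gd^2/(2\gU_\infty)(1+o(1))$ per unit length; the balance is then exact and no $\ell$-optimization is needed.
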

It is proved in the same way as in the pinning model, see \cite[Sec. 4]{cf:B13}, and a brief sketch of the proof is given in Section \ref{sec:rarestretch}. Together with Proposition \ref{prop:copann}, this result also shows disorder relevance for all $\ga\geq0$ in terms of critical exponents, in the case of non-negative correlations, since the annealed phase transition is then known to be of order $1$.

We are also able to show disorder relevance in terms of critical points, with the following result, similar to \cite[Theorem 2.1]{cf:T1} in the IID case.
\begin{theorem}
\label{thm:copolrel}
If correlations are non-negative, then for all $\ga>0$, there exists $\theta(\ga)<1$ such that 
\begin{equation}
\limsup_{\gl\downarrow 0} \frac{h_c^{\cop}(\gl)}{\gl} \leq \theta(\ga) \gU_{\infty}.
\end{equation}
\end{theorem}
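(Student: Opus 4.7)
The plan is to combine a fractional moment method with a coarse graining procedure, in the spirit of Toninelli \cite{cf:T1}, and to handle the correlated Gaussian environment via a suitable Gaussian change of measure. The starting point is Jensen's inequality: it suffices to find $\theta(\alpha)<1$ such that, for some fractional moment exponent $\gamma\in(0,1)$ with $\gamma(1+\alpha)>1$, the quantity $\bbE[(Z_{N,\lambda,h}^{\omega,\cop})^\gamma]$ stays uniformly bounded in $N$ when $h=\theta(\alpha)\Upsilon_\infty \lambda$ and $\lambda$ is small enough. This would force $\tf^{\cop}(\lambda,h)=0$ at such $(\lambda,h)$, hence $h_c^{\cop}(\lambda)\leq \theta(\alpha)\Upsilon_\infty \lambda$.

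The next step is coarse graining. I would introduce a block scale $\ell=\ell(\lambda)$ with $\ell(\lambda)\to\infty$ as $\lambda\downarrow 0$, partition $\{1,\ldots,N\}$ into consecutive blocks of length $\ell$, and for each subset $I$ of blocks containing the last one, let $Z_N^{(I)}$ denote the contribution to the partition function coming from renewal trajectories whose set of visited blocks equals $I$. Subadditivity of $x\mapsto x^\gamma$ then gives $\bbE[(Z_N^{\omega,\cop})^\gamma]\leq \sum_I \bbE[(Z_N^{(I)})^\gamma]$. To estimate each summand, I would tilt $\omega$ by a deterministic positive shift $s$ on the coordinates belonging to blocks of $I$, so as to make these blocks less favorable for being below the interface, and apply Hölder's inequality to split $\bbE[(Z_N^{(I)})^\gamma]$ into the Gaussian cost of the tilt and the tilted expectation of $Z_N^{(I)}$. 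The Gaussian cost is an explicit exponential moment of $\omega$, controlled via Assumption \ref{hyp:correlations} and the uniform spectral bounds on $\Upsilon_k^{-1}$ recalled in Remark \ref{rem:gUinfty}, giving a contribution of order $\exp(Cs^2|I|\ell)$. The tilted expectation is essentially an annealed block expectation in which $h$ is replaced, on the visited blocks, by the effective field $h+s$, so Proposition \ref{prop:copann} produces per-block exponential decay of order $\exp(-c\lambda(h+s-\Upsilon_\infty\lambda)\ell)$ as soon as $h+s>\Upsilon_\infty\lambda$.

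Once these two estimates are in place, I would set $s=\kappa\lambda$ with $\kappa>0$ small and $h=\theta(\alpha)\Upsilon_\infty\lambda$, so that the two competing exponents are both of order $\lambda^2|I|\ell$, with net coefficient $C\kappa^2(1-\gamma)-c\gamma\kappa(\kappa-(1-\theta)\Upsilon_\infty)$; choosing $\theta(\alpha)$ strictly below $1$ and $\kappa$ small enough (depending on $\alpha$ and $\gamma$) makes this coefficient strictly negative, so each summand decays exponentially in $\ell$. The combinatorial sum over $I$ is controlled by the renewal weights $\prod_i K(\mathrm{gap}_i)^\gamma$, and this is where $\alpha>0$ enters in an essential way: $\sum_n K(n)^\gamma<+\infty$ for $\gamma$ close to $1$, so with $\ell(\lambda)$ chosen to grow slowly enough as $\lambda\downarrow 0$ the sum over $I$ converges uniformly in $N$. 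The hard part will be running the change of measure in the presence of correlations: the shift supported on $\cup_{i\in I}B_i$ couples to $\omega$ outside this set through the off-diagonal entries of $\Upsilon$, generating extra contributions both to the Gaussian cost and to the tilted block expectation. Summability of $\rho$ together with invertibility of $\Upsilon$ should force all these cross-terms to be of order $|I|$ rather than $|I|\ell$, so that they are dominated by the leading $\lambda^2|I|\ell$ contributions and do not destroy the strict inequality $\theta(\alpha)<1$.
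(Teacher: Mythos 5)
Your high-level strategy is the same as the paper's: the proof of Theorem~\ref{thm:copolrel} in the paper also uses the fractional-moment criterion, a coarse-graining over blocks of length $k_\lambda$, a Gaussian tilt on the visited blocks, H\"older to split off the Radon--Nikodym cost, the spectral bounds from Assumption~\ref{hyp:correlations} and Remark~\ref{rem:gUinfty} to control that cost, the decoupling lemma to handle off-block correlations, and the choice $\zeta\in(\tfrac{1}{1+\alpha},1)$ to make the sum over coarse-grained configurations converge. So you have identified the right tools.

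However, there is a genuine gap in the central estimate. You assert that, after the tilt, ``Proposition~\ref{prop:copann} produces per-block exponential decay of order $\exp(-c\lambda(h+s-\Upsilon_\infty\lambda)\ell)$ as soon as $h+s>\Upsilon_\infty\lambda$,'' and then you want to compare this decay with the Gaussian cost to get a strictly negative net exponent. This is false. Proposition~\ref{prop:copann} only says that, for an effective field exceeding $\Upsilon_\infty\lambda$, the annealed free energy is \emph{zero}, i.e.\ the block partition function is sub-exponential; it does not, and cannot, decay. Indeed the tilted block expectation is
\begin{equation}
\tilde\bbE_J\bigl[Z_{0,n}\bigr]\ \leq\ \bE\Bigl[\exp\Bigl(-2\lambda\bigl(h+s-\Upsilon_\infty\lambda\bigr)\sum_{i=1}^n \Delta_i\Bigr)\,\delta_n\Bigr],
\end{equation}
and the renewal trajectory with a single excursion above the interface contributes at least $K(n)/2$, which is only polynomially small. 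So the best you can hope for is that $\tilde\bbE_J[Z_{0,n}]$ is bounded by a \emph{small constant} for $n$ of order $k_\lambda\sim\lambda^{-2}$, not that it decays exponentially in the block length. Consequently your ``net coefficient'' $C\kappa^2(1-\gamma)-c\gamma\kappa(\kappa-(1-\theta)\Upsilon_\infty)$ does not control anything like $\exp(-c'\lambda^2\ell)$: with the natural choice $\ell\sim\lambda^{-2}$ these exponents are $O(1)$, and a negative sign would at best give a bounded prefactor, not exponential decay.

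The correct version, which is what the paper (following Toninelli~\cite{cf:T1}) does, is subtler. After the tilt, one is left with $\bE[\exp(-a_\lambda\sum_{i\leq n}\Delta_i)\,\delta_n]$, where the product $a_\lambda\cdot k_\lambda$ is a \emph{large fixed constant}, here $\frac{1}{\Upsilon_\infty\sqrt{1-\theta}}$, which can be made arbitrarily large by letting $\theta\uparrow 1$. Typical trajectories spend a macroscopic fraction of time below the interface, and this forces the expectation to be bounded by a constant that becomes arbitrarily small as $\theta\uparrow 1$; the polynomial contribution $K(n)$ of atypical trajectories is handled separately, using that $n\sim\lambda^{-2}$. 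It is this \emph{small constant} per block, not an exponential decay, that combines with $\sum_n K(n)^\zeta<\infty$ (using $\zeta(1+\alpha)>1$) to make the coarse-grained sum converge. Moreover, your plan assumes an arbitrary $\theta(\alpha)<1$ chosen up front; in fact $\theta$ has to be chosen close to $1$ \emph{a posteriori}, precisely to make that constant small enough, and this is why the theorem only asserts the existence of some $\theta(\alpha)<1$ rather than any given one. Everything else in your outline (the tilt scale $s\asymp\lambda$, the cost estimate, the control of off-block correlations by summability and invertibility of $\Upsilon$, the combinatorics over subsets $I$) matches the paper.
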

Since $h_\a^{\cop}(\gl) = \gU_{\infty}\gl$ when correlations are non-negative, this proves in particular that $h_c^{\cop}(\gl)<h_\a^{\cop}(\gl)$ for $\gl$ small enough, that is disorder relevance in terms of critical points for all $\ga >0$.

As far as the slope of the critical curve is concerned, we strongly believe that the proof of \cite{cf:CG} is still valid with correlated disorder, and that the
slope exists for $\alpha\in (0,1)$. Reproducing Step 2 in \cite[Section 3.2.]{cf:CG}, one would presumably end up with a continuum $\alpha$-copolymer where the disorder is given by a standard Brownian motion multiplied by a corrected variance $\gU_\infty$. We therefore suspect that for $\ga\in (0,1)$, the slope for correlated disorder is the slope for IID disorder multiplied by a factor $\gU_\infty$.
\smallskip

We focus now on the case $\mu:=\bE[\tau_1]<+\infty$, for which we manage to identify the weak-coupling limit of the critical curve, without any restriction on the signs of the correlations. This result is the analogous to \cite[Theorem 1.4]{cf:BCPSZ} in the correlated case.

\begin{theorem}\label{thm:gapcop}
For the correlated copolymer model with $\mu=\bE[\tau_1]<+\infty$,
\begin{equation}\label{eq:limcop}
\lim_{\gl\downarrow 0} \frac{h_c^{\cop}(\gl)}{\gl} = \max\left\{\frac{\gU_{\infty}}{1+\alpha}\, ;\, \frac12 \frac{\gU_{\infty}}{1+\ga} + \frac12 \ccop\right\}
\end{equation}
with
\begin{equation}
\label{defccop}
\ccop:= \bE\bigg[ \frac{1}{\mu}\sum_{n,m=1}^{\tau_1} \rho_{nm} \bigg] =\sum_{n\in\Z} \rho_n \kappa_n,
\end{equation}
where we defined, for $n\in\Z$,
$\kappa_n = \frac{1}{\mu} \sum_{k\geq |n|}\P(\tau_1\geq k  +1) \in [0,1]$. We stress that the $\kappa_n$'s have a probabilistic interpretation in terms of the tail of the invariant measure of the backward recurrence time, see Appendix \ref{app1}.
\end{theorem}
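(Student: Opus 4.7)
The proof strategy adapts that of \cite{cf:BCPSZ} for the IID case to the correlated setting. Writing $M:=\max\{\gU_\infty/(1+\alpha),\ \tfrac12\gU_\infty/(1+\alpha)+\tfrac12\ccop\}$ for the right-hand side of \eqref{eq:limcop}, I would prove the matching inequalities $\liminf_{\lambda\downarrow 0} h_c^\cop(\lambda)/\lambda \geq M$ and $\limsup_{\lambda\downarrow 0} h_c^\cop(\lambda)/\lambda \leq M$ separately.

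For the lower bound on the slope it is enough to establish each of the two candidate lower bounds independently. The inequality $\liminf h_c^\cop(\lambda)/\lambda\geq\gU_\infty/(1+\alpha)$ is already contained in Proposition \ref{prop:BG} (Monthus-type bound). For the other candidate, I would exhibit a localization strategy showing $\tf^\cop(\lambda,h)>0$ whenever $h=\lambda(\tfrac12\gU_\infty/(1+\alpha)+\tfrac12\ccop-\eta)$ with $\eta>0$ fixed and $\lambda$ small. The natural adaptation of the rare-stretch coarse-graining of \cite{cf:BdHO} is to pick a block length $L=L(\lambda)$ of order $1/\lambda^2$ and to target blocks on which $L^{-1}\sum_{n=1}^L\omega_n$ exceeds a threshold $u\asymp\lambda\gU_\infty/(1+\alpha)$. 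On such a block, forcing the polymer into a single excursion below the interface gathers a linear gain $2\lambda L(u-h)$ together with a quadratic gain $2\lambda^2\sum_{n,m=1}^L\rho_{nm}\simeq 2\lambda^2 L\ccop$, the second asymptotic being justified by the ergodic interpretation of $\ccop$ from Appendix \ref{app1}. Balancing these gains against the Gaussian large-deviation cost $\exp(-Lu^2/(2\gU_\infty))$ and the renewal-entropy cost $L^{-\alpha}$ of having such a long excursion, then optimizing in $u$ and $L$, recovers the required slope.

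For the upper bound on the slope I would use the fractional-moment method with a coarse-graining at scale $L\sim 1/\lambda^2$ and a quadratic change of measure on the Gaussian environment. For $h=\lambda(M+\eta)$, the aim is to show $\sup_N\bbE[(Z_{N,\lambda,h}^{\omega,\cop})^\gamma]<\infty$ for some $\gamma\in(0,1)$, which forces $\tf^\cop(\lambda,h)=0$. The tilt is chosen quadratic in $\omega$ in order to simultaneously lower the empirical mean of $\omega$ on each block (neutralizing the Gaussian fluctuation gain of order $\lambda L\cdot\gU_\infty/(1+\alpha)$, i.e.\ the Monthus competitor) and the quadratic $\omega$-statistic $\sum_{n,m}\rho_{nm}\gD_n\gD_m$ (neutralizing the $\lambda^2 L\ccop$ gain appearing in \eqref{eq:annCop}). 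The Gaussian entropic cost of this tilt splits along the two statistics, producing exactly the two numbers $\gU_\infty/(1+\alpha)$ and $\tfrac12(\gU_\infty/(1+\alpha)+\ccop)$, the $\max$ in \eqref{eq:limcop} reflecting the fact that whichever of these dominates determines the binding constraint on $\eta$. One then closes the fractional-moment bound by a standard summation over the coarse-grained skeleton of renewal returns visiting the blocks.

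The principal technical obstacle lies in the construction of the Gaussian change of measure: one must pick a tilt whose Radon--Nikodym density has $L^p$-norms (for $p$ slightly larger than $1/\gamma$) bounded uniformly in the block scale $L$, and whose action simultaneously captures the two competing localization strategies identified above. This is where the invertibility of $\gU$ in Assumption \ref{hyp:correlations} and the uniform spectral bounds on $\gU_L$ from Remark \ref{rem:gUinfty} become essential, as they allow one to manipulate Gaussian exponential moments and invert the block-level covariance matrix with constants that do not deteriorate as $L\to\infty$. A secondary issue, used in both directions, is the concentration of the quadratic term $\sum_{n,m=1}^L\rho_{nm}\gD_n\gD_m$ around $\ccop\cdot L$ when the renewal is typical, which follows from stationarity of the backward-recurrence chain together with the summability of $\rho$.
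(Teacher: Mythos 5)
Your high-level scheme — Monthus bound plus a second lower bound, then a fractional-moment/coarse-graining upper bound — matches the paper's overall organization, but the specific mechanisms you propose for both directions are wrong.

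For the lower bound, you claim that on a high-$\omega$ block, forcing a single long excursion below the interface yields a quadratic gain $2\lambda^2\sum_{n,m=1}^L\rho_{nm}\simeq 2\lambda^2 L\,\ccop$. This is incorrect: with $\gD_n\equiv 1$ on the block, $\sum_{n,m=1}^L\rho_{nm}\sim L\,\gU_\infty$, not $L\,\ccop$, so that strategy only recovers the Monthus bound $\gU_\infty/(1+\alpha)$. The constant $\ccop$ is tied to \emph{typical} renewal behavior (it is $\lim_N N^{-1}\sum_{n,m}\rho_{nm}\bE[\gD_n\gD_m]$ up to an affine shift, cf.\ Lemma \ref{lem:convergenceapp}), and the paper obtains the competitor $\tfrac12\gU_\infty/(1+\alpha)+\tfrac12\ccop$ via a quite different route: Lemma \ref{lem:linearbound} proves the linear-in-$h$ free-energy estimate $\liminf_{\lambda\downarrow 0}\lambda^{-2}\tf^{\cop}(\lambda,c\lambda)\geq\tfrac12\ccop-c$ using Jensen's inequality and the excursion decomposition (no rare-stretch event, no conditioning on $\omega$), and this linear estimate is then \emph{combined} with the smoothing inequality of Proposition \ref{pr:smoothcop}, optimizing the resulting quadratic inequality in the distance $u$ to the critical point. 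Your single localization event does not produce both terms because they come from two structurally distinct inputs.

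For the upper bound, you propose a \emph{quadratic} change of measure aimed at "neutralizing the quadratic $\omega$-statistic $\sum_{n,m}\rho_{nm}\gD_n\gD_m$." Two problems: first, that quantity depends only on the polymer path $(\gD_n)$ and not on $\omega$, so no tilt of the environment can act on it; second, the paper's Lemma \ref{lem:finitefraccop} uses a plain \emph{linear} shift $\gd=a\lambda$ of the Gaussian mean, and the $\ccop$ term then emerges automatically because, under the tilted measure, $\tilde\bbE_{\gd,k}[Z]$ is an annealed partition function whose quadratic renewal term $n^{-1}\sum\rho_{ij}\gD_i\gD_j$ converges to $\tfrac14\gU_\infty+\tfrac14\ccop$ by the ergodic Lemma \ref{lem:convergenceapp}. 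Finally, the max in \eqref{eq:limcop} does not arise from "splitting the Gaussian entropic cost along two statistics"; it arises because the finite-block estimate produces the constraint $c_0\geq\tfrac12\gU_\infty/(1+\alpha)+\tfrac12\ccop$, while the coarse-graining step (controlling the contribution of long excursions spanning several blocks, eq.\ \eqref{eq:useMonthusbound}) separately requires $c_0\geq\gU_\infty/(1+\alpha)$; these are two independent constraints coming from different parts of the proof, hence the maximum. A correct proof also needs the decoupling lemmas (Lemma \ref{lem:decoupling0}, Lemma \ref{lem:decouple}) to factorize across blocks, which is a genuine new difficulty in the correlated case that your sketch does not address.
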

Let us make a few remarks about this result:

\smallskip\noindent
{\bf 1.} One recovers that $m_{\ga}=\frac{2+\ga}{2(1+\ga)}$ in the IID case, because then $\gU_{\infty}=\ccop=1$. It slightly improves Theorem 1.4 in \cite{cf:BCPSZ}, replacing the condition $\ga > 1$ by $\mu < +\infty$. This comes from an improvement in \eqref{eq:bound2} and \eqref{eq:sumalpha=1}, where we use that $\ga = 1$ and $\mu < +\infty$ imply that $\varphi(n) \downarrow 0$ as $n\uparrow\infty$, see \cite[Proposition 1.5.9b]{cf:Bingham}.

\smallskip\noindent
{\bf 2.} The slope is the maximum of two terms: the first term is the generalization of the Monthus bound to the correlated case, whereas the second term is the generalization of the slope found in the IID case \cite[Theorem 1.4]{cf:BCPSZ}.

\smallskip\noindent
{\bf 3.} The Monthus bound $\frac{\gU_{\infty}}{1+\alpha}$ was ruled out in the IID case, except in degenerate examples ($\ga=0$, or the ``reduced" wetting model, see \cite[Theorem 3.4]{cf:T08}).
In the correlated case, it turns out to be the correct limit in some cases, namely when
\begin{equation}
 \frac{\gU_{\infty}}{1+\alpha} - \left(  \frac12 \frac{\gU_{\infty}}{1+\ga} + \frac12 \ccop \right) = \frac12 \left( \frac{\gU_{\infty}}{1+\alpha} - \ccop \right) >0,
\end{equation}
a condition that we can rewrite as:
\begin{equation}\label{eq:monthus_crit}
 \frac{1}{\gU_{\infty}}\sum_{n\in\Z} \rho_n \kappa_n < \frac{1}{1+\alpha} \in (0,1/2].
\end{equation}
At least when the correlations are non-negative, the left-hand side of (\ref{eq:monthus_crit}) can be interpreted as a probability: let $U$ and $V$ be independent random variables with distribution
\begin{align}
 &\forall n\in\bbN_0,\quad \bP(U=n) = \frac{1}{\mu} \bP(\tau_1 \geq n+1), \quad \big( \text{so that } \kappa_n = \bP(U\geq |n|) \big)\\
 &\forall n\in\Z,\quad \bP(V=n) = \rho_n /\gU_{\infty},
\end{align}
then (\ref{eq:monthus_crit}) is equivalent to 
$ \bP(U \geq |V|) < 1/(1+\ga).$
Besides,
\begin{equation}
 \bP(U \geq |V|) \leq \bP(U \geq 1) + \bP(|V| = 0) = 1 - 1/\mu + \gU_\infty^{-1},
\end{equation}
and it can be made arbitrarily small by choosing $K(1)$ close to $1$ and $\gU_\infty$ large enough, so that (\ref{eq:monthus_crit}) holds.

\smallskip \noindent
{\bf 4.} In the case of non-negative correlations, $\ccop \leq \gU_{\infty}$, so that $\lim_{\gl\downarrow 0} \frac{h_c^{\cop}(\gl)}{\gl} \leq \frac{2+\ga}{2(1+\ga)} \, \gU_{\infty}$ (with possibly a strict inequality, as mentioned above). However, with negative correlations, it might be the case that $\ccop>\gU_{\infty}$: take for instance $\rho_0=1$, $\rho_1<0$ and $\rho_k=0$ for $k\geq 2$, so that  $\ccop=1+2\rho_1 (1-1/\mu)>1+2\rho_1 = \gU_{\infty}$ (since $\kappa_0=1$, $\kappa_1=1-1/\mu$). One then would have $\lim_{\gl\downarrow 0} \frac{h_c^{\cop}(\gl)}{\gl} > \frac{2+\ga}{2(1+\ga)} \, \gU_{\infty}$.


\begin{rem} \textit{Annealed system with negative correlations.}\rm
\label{rem:negativecopolann}
The lower bound in \eqref{eq:lowerbound_Facop} comes from the trajectories that makes one large excursion below the interface. This strategy is optimal when the correlations are non-negative, because returning to the interface would only result in a loss of some positive $\rho_{mn}$. Other strategies may actually give better bounds when correlations are allowed to be negative.
For example, using Jensen's inequality on the free annealed partition function, one gets
\begin{equation}
\label{eq:lw.jensenstrategy}
\bbE\big[ Z_{N,\gl,h}^{\go,\cop,{\rm free}} \big]\geq  \exp\Big( -\gl hN +2\gl^2\sum_{n,m=1}^N  \rho_{nm}\bE[\gD_n \gD_m]  \Big).
\end{equation}
If $\mu=\bE[\tau_1]<+\infty$, Lemma \ref{lem:convergenceapp} gives that $\tf_\a^{\cop}(\gl,h) \geq  \lambda \big[ (\frac12 \gU_{\infty}+\frac12 \ccop )\lambda -h\big]_+ $, and 
\begin{equation}
\label{eq:lowerbound_bitbetter}
h_\a^{\cop} (\gl)\geq \gl  \Big(\gU_{\infty} +\frac12(\ccop-\gU_{\infty})_+ \Big).
\end{equation}
One therefore gets that $h_c(\gl)> \gU_{\infty} \gl$ if $\ccop>\gU_{\infty}$ (which can happen, see point {\bf 4} above). 
The strategy highlighted in \eqref{eq:lw.jensenstrategy} is for the renewal to come back to the origin in a typical manner (and lose some negative $\rho_{mn}$'s), and simply use the fact that $2 \times \bE\big[\sum_{n,m=1}^N \rho_{mn} \gD_n \gD_m \big]$ is strictly greater than $ \gU_{\infty} N$ (this doesn't apply when $\alpha<1$, because on average, you don't return enough to the origin, cf. Remark \ref{rem:muinfty}).
Further investigation needs to be carried out to understand  the annealed phase transition in presence of negative correlations.
\end{rem}

\subsubsection{The Pinning Model}
For the pinning model with an IID sequence $\go$, the question of the influence of disorder has been extensively studied, and the so-called Harris prediction \cite{cf:Harris} has been mathematically settled. First, it is known that the annealed phase transition is of order $\max(1,1/\ga)$, and $h_\a^{\pin}(\gb)=-\log \bbE[e^{\gb \go_1}]{\sim} {-\gb^2/2}$, as ${\gb\downarrow 0}$.
Disorder has been proved to be irrelevant if $\ga<1/2$ or if $\ga=1/2$ and $\sum n^{-1}\varphi(n)^{-2}$ is finite, in which case, for $\beta$ small enough, $h_c^{\pin}(\gb)=h_\a^{\pin}(\gb)$ and the critical behavior of the quenched free energy is that of the annealed one
\cite{cf:A2,cf:AZ2,cf:CdH,cf:L,cf:T2}.
The relevant disorder counterpart of these results has also been proved.
In \cite{cf:GT}, the disordered phase
transition is shown to be of order at least $2$, proving disorder relevance for $\ga>1/2$. In terms of critical points, it has been proved that when $\ga>1/2$ or
$\ga=1/2$ and $\varphi(n)=o((\log n)^{1/2-\eta})$ for some $\eta>0$, then $h_c^{\pin}(\gb)>h_\a^{\pin}(\gb)$, see \cite{cf:AZ,cf:CdH,cf:DGLT,cf:GLT1,cf:GLT2}. Moreover, the weak-coupling asymptotic of $h_c^{\pin}(\gb)$ has been computed in \cite{cf:BCPSZ}, when $\ga>1$, $\mu=\bE[\tau_1]<+\infty$:
\begin{equation}
\label{eq:resultBCPSZ1}
\lim_{\gb\downarrow 0}\frac{ h_c^{\pin}(\gb)- h_\a^{\pin}(\gb)}{\gb^2}=\frac{1}{2\mu} \frac{\ga}{1+\ga}.
\end{equation}
For the case $\ga \in (1/2,1)$, we refer to Conjecture 3.5 in the recent paper \cite{cf:CSZ14}, which also provides a new perspective in the study of disorder relevance (and beyond pinning models).

In the correlated case, a few steps have been made towards the same type of criterion.
First, the annealed model is not trivial, and is still not completely solved: in particular, although a spectral characterization of the annealed critical curve is given in \cite{cf:P1}, there is no explicit formula as in the IID case.
\begin{proposition}[cf.\ \cite{cf:B13,cf:P1}]
\label{prop:pinann}
The following limit holds:
\begin{equation}
\label{eq:cpin}
\lim_{\gb \downarrow 0} \frac{h_{\a}^{\pin}(\gb)}{\gb^2} = -\frac12 \cpin,
\quad \text{with }\ \cpin:= \sum_{n\in \bbZ} \rho_n \P(|n|\in\tau).
\end{equation}
Moreover, if $\sum_{n\in\bbN} n |\rho_n|$ is finite then for all $\beta\geq0$, there exist $c_\beta,C_\gb \in (0,\infty)$ such that for $u\geq 0$,
\begin{equation}
\label{eq:pin.sumcorr.order}
c_\gb\, u^{\max(1,1/\ga)} \leq \tf_\a(\gb,h_\a^\pin(\gb)+u) \leq C_\gb\, u^{\max(1,1/\ga)}.
\end{equation}
\end{proposition}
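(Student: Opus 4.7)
Since $\go$ is Gaussian, the annealed partition function has the explicit form
\begin{equation}
\bbE\bigl[Z_{N,\gb,h}^{\go,\pin}\bigr] = \E\biggl[\exp\!\Bigl( h\,|\tau_N| + \tfrac{\gb^2}{2}\, \Sigma_N\Bigr)\gd_N\biggr],
\qquad \Sigma_N := \sum_{x,y \in \tau_N} \rho_{y-x},
\end{equation}
where $\tau_N := \tau\cap[1,N]$. The strategy is to show that, in the regime $\gb\downarrow0$ where the density of contacts tends to $0$, $\Sigma_N$ is well approximated by $\cpin\cdot |\tau_N|$, thereby reducing the annealed correlated pinning model to a homogeneous pinning model with effective parameter $h_{\mathrm{eff}}=h+\tfrac{\gb^2}{2}\cpin$. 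Writing $\Sigma_N=\sum_{x\in\tau_N} A_x$ with $A_x:=\sum_{k\in\bbZ} \rho_k\,\ind_{x+k\in\tau_N}$, the identity $\E[A_x\mid x\in\tau]\to\cpin$ for $x$ in the bulk follows from the strong Markov property of the renewal (which gives $\bP(x+k\in\tau\mid x\in\tau)=\bP(|k|\in\tau)$, up to boundary effects), and the boundary contribution is controlled by $\sum_{|k|>d}|\rho_k|\to 0$ under Assumption~\ref{hyp:correlations}.

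For the asymptotic~\eqref{eq:cpin}, the plan is to prove two matching bounds on $h_\a^\pin(\gb)/\gb^2$. For the upper bound on $h_\a^\pin(\gb)$, fix $\epsilon>0$ and a small density $p=p(\epsilon)$; lower bound $\bbE[Z_N]$ by restricting to trajectories with approximately $pN$ evenly spaced contacts. On such trajectories one has $\Sigma_N\geq (\cpin-\epsilon)\,pN$ modulo boundary corrections, and optimizing over $p$ in conjunction with a comparison with the homogeneous pinning free energy yields $\tf_\a^\pin(\gb,h)>0$ whenever $h>-\tfrac{\gb^2}{2}(\cpin-\epsilon)$. For the lower bound on $h_\a^\pin(\gb)$, write $\Sigma_N\leq (\cpin+\epsilon)|\tau_N|+R_N$ with an error term $R_N$ controlled by a truncation of $\rho_k$ at some scale $d$ together with the summability of $\rho$; this upper-bounds the annealed partition function by that of a homogeneous pinning model at parameter $h+\tfrac{\gb^2}{2}(\cpin+\epsilon)$, whose free energy vanishes when this parameter is non-positive.

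For the order of the transition~\eqref{eq:pin.sumcorr.order}, the strengthened hypothesis $\sum_n n|\rho_n|<\infty$ makes the above sandwich quantitative: the truncation and boundary errors become $O(\gb^2 \sum_{|k|>d}|\rho_k|)$ and $O(\gb^2 d/N)$ respectively, and calibrating $d$ in terms of $u$ yields two-sided bounds of the form
\begin{equation}
\tf^{\mathrm{hom}}\!\bigl(u - c_\gb\,\psi(u)\bigr)\leq \tf_\a^\pin\bigl(\gb,h_\a^\pin(\gb)+u\bigr)\leq \tf^{\mathrm{hom}}\!\bigl(u + C_\gb\,\psi(u)\bigr),
\end{equation}
where $\tf^{\mathrm{hom}}$ denotes the free energy of the homogeneous pinning model and $\psi(u)=o(u)$ as $u\downarrow 0$. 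Classical renewal theory applied to the kernel $K(n)=\gp(n)/n^{1+\ga}$ (via Doney's theorem on the renewal function) gives $\tf^{\mathrm{hom}}(u)\asymp u^{\max(1,1/\ga)}$ as $u\downarrow0$, and~\eqref{eq:pin.sumcorr.order} follows.

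The main technical difficulty lies in controlling the quadratic interaction $\Sigma_N$, which does not factorize along renewal increments. The key estimate quantifies the approximation $\Sigma_N\approx \cpin|\tau_N|$ uniformly in $N$. Summability of $|\rho_n|$ suffices for the asymptotic limit in~\eqref{eq:cpin}, while the stronger moment condition $\sum_n n|\rho_n|<\infty$ is used precisely to make the sandwich sharp enough that the renewal exponent $\max(1,1/\ga)$ transfers from the homogeneous model to the correlated annealed one.
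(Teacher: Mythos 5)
The paper does not prove this proposition; it is cited from \cite{cf:B13,cf:P1}, which establish it via Ruelle--Perron--Frobenius transfer-operator perturbation theory and via finite-range truncation combined with dedicated comparison and decoupling arguments. Your plan identifies the correct target (reduction to a homogeneous model at $h + \tfrac{\gb^2}{2}\cpin$, with $\cpin=\sum_n\rho_n\bP(|n|\in\tau)$), but it relies on two estimates for $\Sigma_N = \sum_{n,m=1}^N\rho_{nm}\gd_n\gd_m$ that fail. First, the pointwise upper bound $\Sigma_N\leq (\cpin+\gep)|\tau_N|+R_N$ with $R_N$ a tail-truncation error is false: on the event $\tau\cap[1,N]=\{1,\ldots,N\}$ (probability $K(1)^N>0$) one has $\Sigma_N=(\gU_\infty+o(1))N$, and generically $\gU_\infty>\cpin$ because $\bP(n\in\tau)<1$ for $n\geq1$. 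Since the annealed weight $e^{\gb^2\Sigma_N/2}$ exponentially rewards precisely these dense configurations, they cannot be dismissed by a truncation or boundary argument; showing their entropy cost dominates the energy gain is the core difficulty that the references handle with transfer-operator or decoupling techniques, and no worst-case pointwise inequality can replace that. This invalidates your claimed lower bound on $h_\a^\pin(\gb)$ and, downstream, the order-of-transition upper bound in \eqref{eq:pin.sumcorr.order}.

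Second, evenly spaced test trajectories produce the wrong constant for the matching bound: at spacing $d=1/p$ one has $\Sigma_N\approx|\tau_N|\sum_{j\in\bbZ}\rho_{jd}\to|\tau_N|\rho_0=|\tau_N|$ as $d\to\infty$, strictly below $(\cpin-\gep)|\tau_N|$ once $\gep<\cpin-1$, so the claimed inequality $\Sigma_N\geq(\cpin-\gep)pN$ breaks as soon as $\cpin>1$ (e.g.\ any nontrivial non-negative correlation). The constant $\cpin$ arises from the \emph{typical renewal microstructure} around each contact, where the conditional law of $\gd_{x+k}$ given $x\in\tau$ approaches $\bP(|k|\in\tau)$, not from an equispaced grid. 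The correct and shorter route to the upper bound on $h_\a^\pin(\gb)$ is Jensen's inequality on $\log Z^{\pin,\a}_N$ combined with the ergodic limits of Lemma~\ref{lem:convergence.pin}, exactly as the paper uses in \eqref{eq:Fpinann_LB0}; for $\mu<\infty$ this yields $\tf_\a^\pin(\gb,h)\geq\tfrac1\mu\bigl(h+\tfrac{\gb^2}{2}\cpin\bigr)$ and hence $h_\a^\pin(\gb)\leq-\tfrac{\gb^2}{2}\cpin$ for every $\gb$, not merely in the limit.
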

The smoothing inequality \cite{cf:GT} has also been extended to the correlated case:
\begin{proposition}[See \cite{cf:B13}, Theorem 2.3]
\label{pr:smoothpin}
For every $\gb>0$ and $\gd\geq0$,
\begin{equation}
\label{eq:pr:smoothpin}
0\leq \tf(\gb,h_c^{\pin}(\gb)+\gd) \leq \frac{1+\alpha}{2\gU_{\infty}} \frac{\gd^2}{\gb^2}.
\end{equation}
\end{proposition}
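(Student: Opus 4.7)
The plan is to extend the Giacomin--Toninelli smoothing argument \cite{cf:GT} from IID to correlated Gaussian disorder, as carried out in \cite{cf:B13}. Two ingredients are needed: a Cameron--Martin shift adapted to the covariance $\gU$, replacing the IID tilt, and a rare-stretch / coarse-graining argument exploiting the renewal structure through the tail of $K$.

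\smallskip
\noindent The first step is the Cameron--Martin tilt. Set $\tf := \tf^{\pin}(\gb, h_c^{\pin}(\gb)+\delta)$, assume $\tf > 0$ (otherwise the bound is trivial), and for $\ell \in \bbN$ and $a \in \bbR$ define the tilted measure $\tilde\bbP_{\ell,a}$ via the Radon--Nikodym derivative
\begin{equation*}
\frac{\dd \tilde\bbP_{\ell,a}}{\dd \bbP}(\omega) \,=\, \exp\Bigl( a \langle \gU_\ell^{-1} \mathbf{1}_\ell, (\omega_1,\dots,\omega_\ell)\rangle - \tfrac{1}{2} a^2 \langle \mathbf{1}_\ell, \gU_\ell^{-1} \mathbf{1}_\ell\rangle \Bigr),
\end{equation*}
under which $(\omega_1-a,\dots,\omega_\ell-a,\omega_{\ell+1},\omega_{\ell+2},\dots)$ has the same law as $\omega$ under $\bbP$. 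By Assumption~\ref{hyp:correlations}, Remark~\ref{rem:gUinfty} and \eqref{defgUinfty}, the relative entropy is
\begin{equation*}
H(\tilde\bbP_{\ell,a}|\bbP) \,=\, \tfrac{1}{2}a^2 \langle \mathbf{1}_\ell, \gU_\ell^{-1}\mathbf{1}_\ell\rangle \,=\, \frac{a^2\ell}{2\gU_\infty}\,\bigl(1+o(1)\bigr), \quad \ell \to \infty,
\end{equation*}
the correlated analog of the IID cost $\tfrac{1}{2}a^2\ell$. Choosing $a = \delta/\gb$ makes the Hamiltonian on $[1,\ell]$ at parameters $(\gb,h_c+\delta)$ equal, up to the Girsanov factor, to the critical Hamiltonian $\sum_{n=1}^\ell (\gb\omega_n + h_c)\delta_n$, whose expected log-partition function is $o(\ell)$ since $\tf^{\pin}(\gb,h_c)=0$.

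\smallskip
\noindent Next I would run a rare-stretch / super-additivity step. Partitioning $[1,N]$ into $M \simeq N/\ell$ consecutive blocks and restricting $Z_N^\omega(\gb,h_c+\delta)$ to trajectories pinned at every block endpoint costs a renewal factor $K(\ell)^M \sim \ell^{-(1+\alpha)M}$, by \eqref{defK}. Applying the tilt blockwise to convert each block Hamiltonian to the critical one, and then using the Jensen / entropy inequality to transfer $\bbE_{\tilde\bbP}\log Z_\ell^\omega$ back to $\bbE_{\bbP}\log Z_\ell^\omega$ at a cost of $H(\tilde\bbP_{\ell,\delta/\gb}|\bbP)$ per block, one obtains after dividing by $N$ and optimizing in $\ell$ a bound in which the factor $(1+\alpha)$ arising from the renewal-tail exponent in \eqref{defK} multiplies the factor $1/\gU_\infty$ from the previous step, giving exactly \eqref{eq:pr:smoothpin}.

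\smallskip
\noindent The main obstacle, absent in \cite{cf:GT}, is that the Cameron--Martin tilts on disjoint blocks are not independent due to the off-diagonal entries of $\gU$. Assumption~\ref{hyp:correlations} --- summability of $\rho$ together with invertibility of $\gU$ and hence uniform spectral bounds on the $\gU_\ell$ --- is precisely what allows the cross-block entropy corrections to be absorbed into the $o(1)$ terms, so that the IID-case computation goes through with the unit variance replaced by the long-range variance $\gU_\infty$ of \eqref{defgUinfty}. The detailed execution of this decoupling is carried out in \cite[Sec.~4]{cf:B13}.
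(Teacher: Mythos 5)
Your broad strategy --- the Cameron--Martin tilt adapted to $\gU$, with relative-entropy cost $\frac{a^2\ell}{2\gU_\infty}(1+o(1))$ replacing the IID cost $\frac{a^2\ell}{2}$, and Assumption~\ref{hyp:correlations} ensuring uniform bounds on $\gU_\ell^{-1}$ --- is indeed what the paper and \cite[Theorem 2.3]{cf:B13} use, and the copolymer version is sketched in Section~\ref{sec:rarestretch}. You have correctly identified the source of the $\gU_\infty$ factor and the role of summability in controlling cross-block dependence.

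However, the coarse-graining step is described incorrectly, and as written it neither produces the factor $1+\alpha$ nor even runs in the right direction. Pinning the renewal at every block endpoint does \emph{not} cost a factor $K(\ell)^M$: it is ordinary super-multiplicativity $Z_N \geq \prod_j Z_{B_j}$, with no $K$-penalty at all. More seriously, what you describe is a \emph{lower} bound on $\log Z_N(\gb,h_c+\gd)$, whereas the smoothing inequality is an \emph{upper} bound on $\tf(\gb,h_c+\gd)$. The correct argument lower-bounds the free energy at $h=h_c$ (which is $=0$) in terms of the free energy at $h=h_c+\gd$: one defines the rare good event $\cA=\{\frac1\ell\log Z_\ell^\go(\gb,h_c)\geq\tf(\gb,h_c+\gd)-\gep\}$, restricts the renewal to visit only good blocks, and pays $K(\text{gap})$ for each jump over the typically $\sim\bbP(\cA)^{-1}$-long run of intervening bad blocks --- this jump cost is precisely where $1+\alpha$ enters. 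The Cameron--Martin tilt is used only to estimate $\bbP(\cA)\approx e^{-\ell a^2/(2\gU_\infty)}$ via the entropy inequality for event probabilities; it is not applied block-by-block to all of $[1,N]$, and the ``Jensen/entropy transfer of $\bbE_{\tilde\bbP}\log Z_\ell$ to $\bbE_\bbP\log Z_\ell$'' you invoke does not give the needed bound (the variational formula for relative entropy points the wrong way, and dropping $1+\alpha$ one would be left with a bound strictly weaker than \eqref{eq:pr:smoothpin}). Balancing the renewal jump cost against the entropy cost and evaluating at $h=h_c$ yields \eqref{eq:pr:smoothpin}; this is precisely the computation ending at \eqref{eq:rarestretch} in Section~\ref{sec:rarestretch}, and its pinning version is carried out in \cite[Sec.~4]{cf:B13}.
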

Therefore, from \eqref{eq:pin.sumcorr.order} and \eqref{eq:pr:smoothpin}, one deduces disorder relevance (in terms of critical exponents) for $\ga>1/2$ and $\sum_{n\in\bbN} n |\rho_n|<\infty$.

Our main result concerning the correlated pinning model is that we identify the small coupling asymptotic of the quenched critical point when $\mu<+\infty$, in analogy with \eqref{eq:resultBCPSZ1}.
\begin{theorem}
\label{thm:gappin}
For the correlated pinning model with $\mu=\bE[\tau_1]<+\infty$,
\begin{equation}
\lim_{\gb\downarrow 0} \frac{h_{c}^{\pin}(\gb)- h_{\a}^{\pin}(\gb)}{\gb^2} = \frac{\gU_{\infty}}{2\mu} \frac{\ga}{1+\ga}. 
\end{equation}
\end{theorem}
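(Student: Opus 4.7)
The proof will follow the two-sided coarse-graining scheme of \cite[Thm.\ 1.4]{cf:BCPSZ}, adapted to the correlated Gaussian setting by systematically replacing the IID variance $1$ by the Toeplitz spectral constant $\gU_\infty$. This substitution is driven by two sharp asymptotics, $\tfrac{1}{k}\langle \gU_k\ind_k,\ind_k\rangle\to\gU_\infty$ (direct) and $\tfrac{1}{k}\langle \gU_k^{-1}\ind_k,\ind_k\rangle\to 1/\gU_\infty$ (dual), both guaranteed by Hypothesis~\ref{hyp:correlations} and Remark~\ref{rem:gUinfty}. Throughout, write $h=h_\a^\pin(\gb)+u\gb^2$ with $u>0$, and partition $\{1,\ldots,N\}$ into blocks of size $k=k(\gb)\asymp \gb^{-2}$; the analysis reduces to estimating a single-block partition function $Z_k^{\go}(\gb,h)$ in the weak-coupling regime, where Proposition~\ref{prop:pinann} and \eqref{eq:pin.sumcorr.order} yield $\tf_\a^\pin(\gb,h_\a^\pin(\gb)+u\gb^2)\asymp u\gb^2$ (since $\mu<\infty$ forces $\max(1,1/\ga)=1$).

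\textbf{Upper bound on $h_c^\pin-h_\a^\pin$.} For $u>\frac{\gU_\infty}{2\mu}\frac{\ga}{1+\ga}$, I would prove $\tf^\pin(\gb,h)>0$ by a rare-stretch / Paley--Zygmund argument on $Z_k^\go$, as in \cite[Sec.\ 4]{cf:BCPSZ}, keeping only blocks in which $\tau$ makes a single long excursion of length $\asymp k$ (probability $\asymp K(k) \asymp \gb^{2(1+\ga)}\varphi(\gb^{-2})$) over a favourable disorder window. The Gaussian second moment of the single-block partition function produces the extra factor $\exp\bigl(\gb^2\sum_{n,m\leq k}\rho_{nm}\gd_n\gd_m'\bigr)$ for two independent renewal copies $\gd,\gd'$, whose expectation is $\exp(\gU_\infty k/\mu^2+o(k))$ by $\P(n\in\tau)\to 1/\mu$ and $\sum_n\rho_n=\gU_\infty$; hence the second moment is controlled by the square of the first, and concatenating good blocks via the standard renewal argument yields a positive free energy precisely when $u>\frac{\gU_\infty}{2\mu}\frac{\ga}{1+\ga}$.

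\textbf{Lower bound on $h_c^\pin-h_\a^\pin$.} For $u<\frac{\gU_\infty}{2\mu}\frac{\ga}{1+\ga}$, I would prove $\bbE[(Z_N^\go)^\gamma]\to 0$ for some $\gamma\in(0,1)$ via fractional moments, coarse-graining, and a block-wise Cameron--Martin tilt. On each block $B_j=\{(j-1)k+1,\ldots,jk\}$, shift $(\go_n)_{n\in B_j}$ by $-\eta\ind_{B_j}$ and apply H\"older: the Radon--Nikodym density $f_j$ satisfies $\bbE[f_j^{\gamma/(1-\gamma)}]^{1-\gamma}=\exp\bigl(\tfrac{\gamma}{2(1-\gamma)}\eta^2\langle\gU_k^{-1}\ind_k,\ind_k\rangle\bigr)=\exp\bigl(\tfrac{\gamma}{2(1-\gamma)}\eta^2 k/\gU_\infty+o(k)\bigr)$, while the polymer gain is $\E[e^{-\eta\gb\sum_{n\in B_j}\gd_n}]\leq \exp(-\eta\gb k/\mu+o(k))$ by the law of large numbers for $|\tau\cap B_j|$. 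Optimising in $\eta$ and taking $\gamma\uparrow 1$, then bounding the sum over block configurations using the tail $K(k)\asymp \gb^{2(1+\ga)}\varphi(\gb^{-2})$ as in the IID fractional-moment template, delivers a block-level contractive inequality exactly at the threshold $\frac{\gU_\infty}{2\mu}\frac{\ga}{1+\ga}$.

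\textbf{Main difficulty.} The crux is the sharp evaluation of the two Toeplitz forms $\langle\gU_k\ind_k,\ind_k\rangle$ and $\langle\gU_k^{-1}\ind_k,\ind_k\rangle$ to first order in $k$; the latter uses in an essential way the invertibility of $\gU$ in Hypothesis~\ref{hyp:correlations} (to get a uniform lower bound on the spectrum of $\gU_k$) and the Szeg\"o-type asymptotics recalled in Remark~\ref{rem:gUinfty}. Cross-block boundary corrections --- correlations across block boundaries that a product block decomposition discards --- are sub-extensive in $k$ thanks to the summability $\sum_n|\rho_n|<\infty$, and can be absorbed into the $o(k)$ error terms in both steps. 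Once these two ingredients are in place, the combinatorial backbone (renewal-tail estimates, Paley--Zygmund, fractional H\"older) is the same as in the IID case of \cite{cf:BCPSZ}, with every occurrence of the variance $1$ replaced by $\gU_\infty$.
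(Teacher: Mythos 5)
Your lower bound on $h_c^{\pin}-h_\a^{\pin}$ (fractional moments, block tilt, coarse-graining) is indeed the route the paper takes, and your cost/gain accounting --- tilt cost $\exp\bigl(\tfrac{\gamma}{2(1-\gamma)}\eta^2 k/\gU_\infty\bigr)$ via $\langle\gU_k^{-1}\ind_k,\ind_k\rangle\sim k/\gU_\infty$, gain of order $\exp(-\eta\gb k/\mu)$ --- is correct, modulo the $\cpin$ cancellation in the tilted annealed moment which you gloss over. One slip here: after optimising in $\eta$, the single-block threshold is $u<\tfrac{(1-\gamma)\gU_\infty}{2\mu}$, so you must send $\gamma\downarrow\tfrac{1}{1+\ga}$ (the coarse-graining constraint $\gamma(1+\ga)>1$), \emph{not} $\gamma\uparrow1$; the latter collapses the threshold to $0$.

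The genuine gap is in your upper bound on $h_c^{\pin}-h_\a^{\pin}$, i.e.\ the proof that $\tf^{\pin}(\gb,h_\a^{\pin}(\gb)+u\gb^2)>0$ for $u>\tfrac{\gU_\infty}{2\mu}\tfrac{\ga}{1+\ga}$. You propose a rare-stretch plus Paley--Zygmund (second-moment) argument. Let's run the numbers with $k=t/\gb^2$: the ratio $\bbE[Z_k^2]/(\bbE Z_k)^2=\E^{\otimes 2}\bigl[\exp\bigl(\gb^2\sum\rho_{nm}\gd_n\gd_m'\bigr)\bigr]\sim e^{t\gU_\infty/\mu^2}$, so Paley--Zygmund gives good blocks with density $\theta\geq c\,e^{-t\gU_\infty/\mu^2}$. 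Feeding this into the rare-stretch inequality, the entropy cost per site is $\tfrac{1+\ga}{k}\log\tfrac{1}{\theta}\approx(1+\ga)\gb^2\gU_\infty/\mu^2$, while the gain per site in a good block is $\approx u\gb^2/\mu$. This yields $\tf^{\pin}>0$ only for $u>(1+\ga)\gU_\infty/\mu$, which is off from the sharp threshold $\tfrac{\gU_\infty}{2\mu}\tfrac{\ga}{1+\ga}$ by a large factor (e.g.\ a factor $8$ when $\ga=1$). The second-moment/Paley--Zygmund route therefore does not close this half of the theorem. What the paper does instead is to combine the smoothing inequality (Proposition~\ref{pr:smoothpin}) with a \emph{free-energy lower bound} of the form $\liminf_{\gb\downarrow0}\gb^{-2}\tf^{\pin}(\gb,c\gb^2)\geq\tfrac1\mu\bigl[c+\tfrac12(\cpin-\gU_\infty/\mu)\bigr]$ (Lemma~\ref{lem:lwpin}), which is proved by Gaussian interpolation/replica coupling (Lemma~\ref{lem:interpol}), not by a second-moment computation. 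Optimising the combination of the smoothing bound and this interpolation lower bound is precisely what produces the constant $\tfrac{\gU_\infty}{2\mu}\tfrac{\ga}{1+\ga}$; you would need to replace your Paley--Zygmund step with this interpolation argument (or a genuinely equivalent one) for the upper bound to work.

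Two smaller remarks: the phrase ``keeping only blocks in which $\tau$ makes a single long excursion of length $\asymp k$'' reads as a copolymer-type strategy and does not describe what should happen inside a good pinning block (where $\tau$ should touch the defect line densely); and the cross-block decoupling you wave off as an $o(k)$ correction actually requires a dedicated Gaussian interpolation lemma (Lemma~\ref{lem:decouple.general}) to factorise the product over blocks, since H\"older alone does not separate correlated Gaussians.
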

Notice that the asymptotics given in Theorem~\ref{thm:gappin} and that of \eqref{eq:resultBCPSZ1} only differ through the multiplicative constant $\gU_{\infty}$. In particular, one recovers \eqref{eq:resultBCPSZ1} in the IID case, where $\gU_{\infty}=1$. Theorem \ref{thm:gappin} proves disorder relevance in terms of critical points if $\mu<+\infty$ (in particular if $\ga>1$), under Assumption \ref{hyp:correlations}. 

\subsection{Outline of the proofs}
We mostly focus on the copolymer model, which has not been investigated so far in the correlated case, and we give a detailed proof only in that case. The pinning model essentially follows the same scheme.

In Section \ref{sec:lowcopol}, we deal with lower bounds on the free energy, leading to lower bounds on the critical curve. The basic ingredient is a rare-stretch strategy, already widely used in the literature. This approach was initiated in \cite{cf:BG,cf:GT} in the context of the pinning and copolymer models. We briefly recall how to use this technique and explain how it is modified by the presence of correlations. It is then applied to get the smoothing inequality of Proposition \ref{pr:smoothcop}, the Monthus bound of Proposition \ref{prop:BG}, and thanks to the additional estimate in Lemma \ref{lem:linearbound} (analogous to \cite[Lemma 5.1]{cf:BCPSZ} in the correlated case), we get the second lower bound of Theorem \ref{thm:gapcop}, that is $\liminf_{\gl\downarrow 0} \frac{h_c(\gl)}{\gl} \geq \frac12 \frac{\gU_{\infty}}{1+\ga} + \frac12 \ccop$.

In Section \ref{sec:uppercopol}, we deal with the upper bound in Theorem \ref{thm:gapcop}. We employ a standard technique which was introduced by \cite{cf:DGLT} in the context of the pinning model, and developed in \cite{cf:GLT1,cf:GLT2,cf:BCPSZ}: it is the so-called fractional moment method, combined with a coarse-graining argument. However, the adaptation of this technique to the correlated case requires considerable work: Lemma \ref{lem:finitefraccop} allows us to control the fractional moment of the partition function on length scale $1/\gl^2$, whereas Lemma \ref{lem:decouple} helps us to control the correlation terms in the coarse-graining argument, which is a necessary step to glue the finite-size estimates together.

Section \ref{sec:adaptpin} adapts the proofs to the pinning model. We begin with Lemma \ref{lem:lwpin} (analogous to \cite[Lemma 3.1]{cf:BCPSZ}), which is the pinning model counterpart of Lemma \ref{lem:linearbound}. Our proof relies on Gaussian interpolation techniques. Combining this lemma with the smoothing inequality of Proposition \ref{pr:smoothpin} is the key to obtain the upper bound in Theorem \ref{thm:gappin}, that is $\limsup_{\gl\downarrow 0} \frac{h_c^{\pin}(\gl) - h_\a^{\pin}(\gl)}{\gl} \leq \frac{\gU_{\infty}}{2\mu} \frac{\ga}{1+\ga}$. The lower bound for the pinning model follows the same fractional moment/coarse-graining scheme as for the copolymer model, and one mostly needs to adapt the finite-size estimates of the fractional moment, see Lemma \ref{lem:finitefractpin}. Together with minor changes in the coarse-graining procedure, this yields the right lower bound in Theorem \ref{thm:gappin}.

\section{On the copolymer model: lower bound}
\label{sec:lowcopol}

\subsection{Rare-stretch strategy}
\label{sec:rarestretch}
To get a lower bound on the free energy, it is enough to highlight particular trajectories, and show that these contribute enough to the free energy to make it positive.
To that purpose, we use a rare stretch strategy: we only consider trajectories which stay above the interface (where there is no interaction) until they reach favorable regions of the interface and localize. We now describe how to implement this idea for the sake of completeness, but we omit details, since the procedure is standard (since \cite{cf:GT}), and was already used in a correlated framework in \cite{cf:B13bis}.

Let us fix $L$ a large constant integer, and divide the system in blocks of length $L$, denoted by $(B_i)_{i\in\bbN}$, $B_i:=\{ (i-1) L+1,\ldots,i L \}$.
Then, one restricts the trajectories to visit only blocks $B_i$ for which $(\go_j)_{j\in B_i}\in\cA$, where $\cA$ is an event corresponding to a ``good" property of $\go$ on $B_i$. There are many ways to define $\cA$, but for our purpose, we consider
\begin{equation}
\cA=\Big\{ (\go_j)_{1\leq j\leq L}\ ;\ \frac{1}{ L} \log Z_{ L,\gl,h}^{\go,\cop} \geq \tf^{\cop}(\gl, h-a) -\gep \Big\},
\end{equation}
for some $a\in\bbR$, and $\gep>0$ fixed but meant to be small. Let us denote by $(i_k)_{k\in \bbN}$ the (random) indices of the good blocks ($(\go_j)_{j\in B_{i_k}}\in\cA$). Restricting the partition function to trajectories which only visit the blocks $(B_{i_k})_{k\in\bbN}$ gives
\begin{equation}
 Z_{i_n  L, \gl,h}^{\go,\cop} \geq\prod_{k=1}^n K\big( (i_k-i_{k-1}-1) L \big)  Z_{B_{i_k}}
\end{equation}
where $Z_{B_{i_k}}:= Z_{ L,\gl,h}^{\theta^{ i_k  L}\go,\cop}$ and $\theta$ is the shift operator ($\theta \go = (\go_{n+1})_{n\in \bbN} $).
We may now choose $ L$ large enough such that $(1+\ga+\gep^2) \frac{1}{ L} \log  L \leq \gep$, and by \eqref{defK}, write
\begin{multline}
\frac{1}{i_n  L } \log Z_{i_n  L, \gl,h}^{\go,\cop} \geq \frac{n}{i_n} \frac{1}{n} \sum_{k=1}^n \bigg(-(1+\ga+\gep^2) \frac{1}{ L} \log\big( (i_k-i_{k-1}) L \big)  + \frac{1}{ L}  \log Z_{B_{i_k}} \bigg)\\
\geq \frac{n}{i_n} \frac{1}{n} \sum_{k=1}^n \bigg( -(1+\ga+\gep^2) \frac{1}{ L} \log\big( i_k-i_{k-1} \big)  + \tf^{\cop}(\gl,h-a)-2\gep \bigg),
\end{multline}
where we used the definition of the event $\cA$ to estimate $\frac{1}{ L} \log Z_{B_{i_k}}$. Taking the limit $n\to\infty$, and using (twice) Birkhoff's ergodic Theorem, one gets
\begin{equation}
\tf^{\cop}(\gl,h) = \liminf_{N\to\infty} \frac{1}{N} \log Z_{N,\gl,h}^{\go,\cop}\geq \frac{1}{\bbE[i_1]} \big(-(1+\ga +\gep^2) \frac{1}{ L} \bbE \log i_1  + \tf^{\cop}(\gl,h-a)-2\gep \big).
\end{equation}
Since $\bbE \log i_1 \leq  \log \bbE i_1$ and $ \bbE i_1= \bbP(\cA)^{-1}$, one is left to estimate $\bbP(\cA)$. To that end, one uses a change of measure argument. Let us consider $\tilde \bbP_{ L}$ the measure consisting in shifting $(\go_1,\ldots,\go_{ L})$ by $-a$, so that under $\tilde \bbP_{ L}$, the event $\cA$ is typical. Indeed, shifting $\go$ by $-a$ corresponds to a shift of the parameter $h$ by $-a$ in the partition function. Therefore, when $ L$ goes to infinity, $\tilde\bbP_{ L} (\cA)= 1+o(1)$. Then, we use the standard entropy inequality 
\begin{equation}
\bbP(\cA) \geq \tilde \bbP_{ L} (\cA) \exp\left( -\tilde\bbP_{ L}(\cA)^{-1} ({\rm H}(\tilde\bbP_{ L}|\bbP)+e^{-1}) \right),
\end{equation}
 where ${\rm H}(\tilde\bbP_{ L}|\bbP)$ denotes the relative entropy of $\tilde\bbP_{ L}$ with respect to $\bbP$, and verifies
\begin{equation}
{\rm H}(\tilde\bbP_{ L}|\bbP) = \frac{a^2}{2} \langle \gU_{ L}^{-1} \ind_{ L}\, , \ind_{ L} \rangle = (1+o(1)) \frac{a^2}{2\gU_{\infty}}\,  L ,
\end{equation}
where the last equality comes from Lemma A.1 in \cite{cf:B13} (and uses Assumption \ref{hyp:correlations}). In the end, we get
\begin{equation}
\frac{1}{ L}\log\bbP(\cA) = -\frac{a^2}{2\gU_{\infty}}(1+o(1)).
\end{equation}
Then, for a fixed $\gep>0$ chosen small enough, the following lower bound holds for $\gl, a\in \bbR^+$, provided that $L=L(\gep)$ is large enough,
\begin{equation}
\label{eq:rarestretch}
\tf^{\cop}(\gl, h) \geq \bbP(\cA)\bigg( \tf(\gl, h-a ) - (1+\ga) \frac{a^2}{2\gU_{\infty} } - 3 \gep \bigg),
\end{equation}
It is then straightforward to get the smoothing inequality, Proposition~\ref{pr:smoothcop}. Indeed, evaluating \eqref{eq:rarestretch} at $h=h_c^{\cop}(\gl)$, we get $\tf(\gl, h_c^{\cop}(\gl))=0$ in the left-hand side, and therefore,
\begin{equation}
\tf^{\cop}(\gl, h_c^{\cop}(\gl)- a ) - (1+\ga) \frac{a^2}{2\gU_{\infty} } - 3 \gep\leq 0.
\end{equation}
The result follows by letting $\gep$ go to $0$.
\qed

\subsection{Application: lower bounds on the critical point}
\label{sec:applicrarestrecth}
Non-trivial bounds on the critical point follow from \eqref{eq:rarestretch}. In particular, it is straightforward to get
\begin{equation}
\label{eq:borneinfh}
\sup_{a\in\bbR}\Big\{ \tf^{\cop}(\gl, h - a ) - (1+\ga) \frac{a^2}{2\gU_{\infty}} \Big\}  >0 \quad \Rightarrow\quad  h \, <\, h_c^{\cop} (\gl).
\end{equation}

\smallskip
{\bf Lower bound in Proposition \ref{prop:BG}.}
One may plug in \eqref{eq:borneinfh} the inequality $\tf(\gl,h)\geq -2\gl h$, which holds for $h\in\bbR$. This comes from the contribution of trajectories making one large excursion below the interface: $Z_{N,\gl,h}^{\go,\cop}\geq \bP(\tau_1\geq N) \bP(X_1=1) e^{-2\gl h N + 2\gl \sum_{n=1}^N \go_n}$, so that $\liminf_{N\to\infty} \frac1N \bbE\log Z_{N,\gl,h}^{\go,\cop} \geq -2\gl h$ (since the disorder is centered). One therefore has
\begin{equation}\label{eq:borneinfh2}
\sup_{a\geq 0}\Big\{ \tf^{\cop}(\gl, h - a ) - (1+\ga) \frac{a^2}{2\gU_{\infty} } \Big\} \geq \sup_{a\in\bbR} \Big\{  -2\gl (h-a)  - (1+\ga) \frac{a^2}{2\gU_{\infty}}\Big\} = -2\gl h +2\gl^2 \frac{\gU_{\infty}}{1+\ga}.
\end{equation}
Therefore, \eqref{eq:borneinfh} and \eqref{eq:borneinfh2} yield that $h_c^{\cop}(\gl)\geq  \frac{\gU_{\infty}}{1+\ga} \gl $ for all $\gl\geq 0$, which is the Monthus bound in Proposition \ref{prop:BG}, and gives the first part of the lower bound in Theorem \ref{thm:gapcop}.\qed

\smallskip
{\bf Lower bound in Theorem \ref{thm:gapcop}.}
More precise linear estimates on $\tf(\gl,h)$ give sharper lower bounds for $h_c^{\cop}(\gl)$.
The following lemma is analogous to \cite[Lem. 5.1]{cf:BCPSZ}. Important refinements were made to deal with correlations, with the help of the Gaussian nature of the disorder.
\begin{lemma}
\label{lem:linearbound}
If $\mu = \bE[\tau_1]<+\infty$, 
then for any $c\in\R$,
\begin{equation}\label{eq:gooa2}
\liminf_{\gl\downarrow 0} \frac{1}{\gl^2}\f^{\cop}(\lambda, c\lambda) \geq \frac12 \ccop  - c,
\end{equation}
where $\ccop$ has been introduced in \eqref{defccop}.
\end{lemma}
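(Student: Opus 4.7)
The plan is to combine a Jensen step at the level of the renewal $\tau$ (after integrating out the up/down choices $X$) with Gaussian integration by parts on the disorder $\omega$.

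Integrating out the $X_k$'s conditional on $\tau$ produces an exact $\log\cosh$ expression: with $J_k := (\tau_{k-1},\tau_k]$, $S_k := \sum_{n\in J_k}(\omega_n+h)$, and $K_N := |\tau\cap[1,N]|$,
\[
\tilde Z_N^\omega(\tau) := \E^X\bigl[e^{-2\lambda\sum_n(\omega_n+h)\Delta_n}\mid\tau\bigr] = \prod_{k=1}^{K_N}e^{-\lambda S_k}\cosh(\lambda S_k).
\]
Since $Z_N^{\omega,\cop}=\bP(N\in\tau)\cdot\E^\tau[\tilde Z_N^\omega(\tau)\mid N\in\tau]$, Jensen's inequality for the concave $\log$ together with $\bbE[\omega_n]=0$ yields
\[
\bbE\log Z_N^{\omega,\cop} \geq \log\bP(N\in\tau) - \lambda h N + \E^\tau\Big[\sum_{k=1}^{K_N}\bbE[\log\cosh(\lambda S_k)]\,\Big|\,N\in\tau\Big].
\]

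Under $\bbE$, each $S_k$ is Gaussian with mean $\mu_k:=h|J_k|$ and variance $\sigma_k^2:=\sum_{n,m\in J_k}\rho_{nm}$. I would then use the pointwise inequality $\log\cosh(y)\geq\tfrac12 y\tanh(y)$ (checked by noting that $y\mapsto\log\cosh y-\tfrac12 y\tanh y$ has a minimum at $0$) combined with Stein's lemma on the centered Gaussian part of $S_k$:
\[
\bbE[\log\cosh(\lambda S_k)] \geq \frac{\lambda\mu_k}{2}\bbE\tanh(\lambda S_k) + \frac{\lambda^2\sigma_k^2}{2}\bbE\,\mathrm{sech}^2(\lambda S_k).
\]
The drift term is non-negative ($\mu_k$ and $\bbE\tanh(\lambda S_k)$ share signs because $\tanh$ is odd-increasing and $S_k$ has mean $\mu_k$) and can be dropped. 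By Assumption~\ref{hyp:correlations}, $\sigma_k^2\leq CL_k$, so the renewal reward theorem gives $\frac{1}{N}\E^\tau[\sum_k\sigma_k^2\mid N\in\tau]\to\ccop$, and $\frac{1}{N}\log\bP(N\in\tau)\to 0$ since $\bP(N\in\tau)\to 1/\mu>0$ when $\mu<\infty$.

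The hard part will be that $\bE[\tau_1^2]$ may be infinite when $\alpha\in(1,2)$ (and even $\alpha=1$), which rules out the naive Taylor bound $\log\cosh(y)\geq y^2/2-y^4/12$ whose $y^4$ correction blows up upon summation over renewals. The Stein-based bound above instead produces an error $\tfrac{\lambda^2}{2}\sum_k\sigma_k^2\bbE\tanh^2(\lambda S_k)$, automatically bounded by $\tfrac{\lambda^2}{2}\sum_k\sigma_k^2$ since $\tanh^2\leq 1$. To show it is $o(\lambda^2)$ as $\lambda\downarrow 0$, I would truncate at $L_\lambda := \lfloor 1/\lambda\rfloor$: for $|J_k|\leq L_\lambda$ one has $\bbE\tanh^2(\lambda S_k)\leq\lambda^2(\sigma_k^2+\mu_k^2)=O(\lambda)$, giving a per-unit-length contribution of order $\lambda$; for $|J_k|>L_\lambda$ use $\tanh^2\leq 1$ together with the renewal reward bound $\tfrac{1}{\mu}\bE[\tau_1\mathbf{1}_{\{\tau_1>L_\lambda\}}]\to 0$ as $\lambda\downarrow 0$, which relies precisely on $\mu<\infty$ (and, in the boundary case $\alpha=1$, on $\varphi(n)\downarrow 0$, which itself is implied by $\mu<\infty$). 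Collecting everything gives $\f^{\cop}(\lambda,c\lambda)\geq -c\lambda^2+\tfrac{\lambda^2}{2}(\ccop+o(1))$ as $\lambda\downarrow 0$, which after dividing by $\lambda^2$ is \eqref{eq:gooa2}.
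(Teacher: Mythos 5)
Your proposal is correct, and it takes the same Jensen-at-the-renewal-level first step as the paper (integrating out the $X_k$'s, passing $\log$ through $\E$, and invoking renewal--reward to reduce everything to $\frac1\mu\E\bbE[\log\cosh(\lambda S_{\tau_1})]$), but the subsequent analysis of $\bbE[\log\cosh(\lambda S_k)]$ is genuinely different. The paper simply Taylor-expands $\log\cosh$ at $0$ for \emph{fixed} excursion length $k$, obtaining the exact asymptotic $\frac1{\lambda^2}\bbE[\log\cosh(\lambda S_k)]\to\frac12\sum_{n,m=1}^k\rho_{nm}$, and then passes the $\liminf$ inside the remaining $\E^\tau$ by Fatou's lemma (legitimate because $\log\cosh\geq 0$). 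This sidesteps any uniformity issue in $k$ entirely; the finiteness of $\mu$ only enters in checking that the Fatou limit $\frac{1}{2\mu}\E[\sum_{n,m=1}^{\tau_1}\rho_{nm}]=\frac12\ccop$ is finite. Your route instead obtains a \emph{quantitative} lower bound via the pointwise inequality $\log\cosh y\geq\frac12 y\tanh y$, Stein's lemma, and then a truncation at scale $1/\lambda$ to control $\frac{\lambda^2}{2}\sum_k\sigma_k^2\,\bbE\tanh^2(\lambda S_k)$ using $\mu<\infty$ and dominated convergence on $\E[\tau_1\ind_{\{\tau_1>L_\lambda\}}]$. Both arguments close, and both ultimately hinge on $\mu<\infty$; the paper's Fatou step is shorter and requires no uniform remainder estimate, whereas your argument produces explicit error bounds (and makes transparent why heavy tails of $\tau_1^2$ are not an obstacle). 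One small remark: your parenthetical about $\varphi(n)\downarrow0$ at $\alpha=1$ is not needed for the dominated-convergence step $\E[\tau_1\ind_{\{\tau_1>L_\lambda\}}]\to0$ — that follows from $\mu<\infty$ alone; the $\varphi\downarrow0$ fact is used elsewhere in the paper (in the coarse-graining estimates), not here.
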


Let $(\gl_n)_{n\in\bbN}$ be a sequence of positive real numbers such that
\begin{equation}
\lim_{n\to\infty} \gl_n = 0\quad \mbox{ and }  \quad \lim_{n\to\infty} \frac{h_c^\cop(\gl_n)}{\gl_n} =  \liminf_{\gl \downarrow 0} \frac{h_c^\cop(\gl)}{\gl}.
\end{equation}
Simply using that $h\mapsto\f^{\cop}(\gl,h)$ is non-increasing and Lemma \ref{lem:linearbound}, one gets that
\begin{equation}
\liminf_{n\to \infty} \frac{1}{\gl_n^2} \f^{\cop}(\lambda_n, h_c^\cop(\gl_n)) \geq \frac12 \ccop  - \liminf_{\gl \downarrow 0} \frac{h_c^\cop(\gl)}{\gl}.
\end{equation}
Combining this with the smoothing inequality of Proposition \ref{pr:smoothcop}, 
one obtains, for all $u\geq 0$,
\begin{equation}
\label{eq:combine1}
\frac{1+\alpha}{2 \gU_{\infty}} u^2 \geq \liminf_{n\to\infty} \frac{1}{\gl_n^2} \f^{\cop}(\gl_n,h_c^{\cop}(\gl_n)-u\lambda_n) \geq  \frac12 \ccop - \liminf_{\lambda\downarrow 0} \frac{h_c^{\cop}(\gl)}{\gl} +u.
\end{equation} 
Then,
\begin{equation}
\label{eq:combine2}
\liminf_{\lambda\downarrow 0} \frac{h_c^{\cop}(\gl)}{\gl} \geq \frac12 \ccop + \sup_{u\geq 0} \left\{u-\frac{1+\alpha}{2\gU_{\infty}} u^2 \right\} =\frac12 \ccop + \frac{\gU_{\infty}}{2(1+\ga)}.
\end{equation}
 \qed
 
\begin{proof}[Proof of Lemma \ref{lem:linearbound}.]
Here, we adapt an idea of \cite[Theorem 6.3]{cf:G1}.
Let
\begin{equation}
\cN_N:=\max\{n\in \bbN_0\colon \tau_n\leq N\} = |\tau \cap \{1,\ldots, N\}|.
\end{equation}
Simply using Jensen's inequality on the (free) partition function, and then the law of large numbers for $\cN_N$, we obtain
\begin{multline}
\label{Jensen1}
	\f^{\cop}(\lambda,h) \geq
	\lim_{N\to \infty} \frac{1}{N}\,\bbE\log \E\Big[ \prod_{j=1}^{\cN_N}
	\frac{1+e^{-2\lambda\sum_{n=\tau_{j-1}+1}^{\tau_j} (\go_n+ h) } }{2}\Big]\\
	\geq \frac{1}{\mu} \bbE\E \left[\log\left( \frac{1+e^{-2\lambda\sum_{n=1}^{\tau_1} 
	(\go_n+ h) } }{2} \right) \right].
\end{multline}
If $h=c\gl$ and $k$ is fixed, a Taylor expansion gives
\begin{multline}
\bbE \bigg[ \log\left(\frac{1+e^{-2\lambda\sum_{n=1}^{k} 
	(\go_n+ h) } }{2} \right) + \lambda\sum_{n=1}^k(\go_n+ h) \bigg]\\
	\stackrel{\gl\downarrow 0}{\sim}\frac12\lambda^2 \bbE\bigg[ \left(\sum_{n=1}^k \omega_n\right)^2\bigg] = \frac12 \gl^2 \sum_{n,m=1}^k \rho_{nm}.
\end{multline}
 We can then apply Fatou's lemma (note that the expression in the expectation is non-negative) and get
 \begin{equation}
 \label{Fatou}
\liminf_{\gl\downarrow 0} \frac{1}{\gl^2}\frac{1}{\mu}\E\bbE \left[\log\left( \frac{1+e^{-2\lambda\sum_{n=1}^{\tau_1} 
	(\go_n+ h) } }{2} \right) + \lambda\sum_{n=1}^N(\go_n+h) \right]
	 \geq \frac{1}{2\mu}\bE\left[ \sum_{n,m=1}^{\tau_1}  \rho_{mn} \right].
 \end{equation}
Therefore, combining \eqref{Jensen1} with \eqref{Fatou} (recall that the $\go_n$'s are centered and that $h=c\gl$), one obtains
\begin{equation}
\liminf_{\gl\downarrow 0}\frac{1}{\gl^2}\f^{\cop}(\gl,c\gl)\geq  \frac{1}{2\mu}\bE\bigg[ \sum_{n,m=1}^{\tau_1} \rho_{nm} \bigg] - c = \frac12 \ccop  - c.
\end{equation}
\end{proof}

\section{On the copolymer model: upper bound}
\label{sec:uppercopol}
In this section we prove the upper bound part of Theorem \ref{thm:gapcop}, that is 
\begin{equation}
\limsup_{\gl\downarrow 0} \frac{h_c^{\cop}(\gl)}{\gl} \leq \max \bigg\{ \frac{\gU_{\infty}}{1+\ga}\, ;\, \frac12\frac{\gU_{\infty}}{1+\alpha} +\frac12 \ccop \bigg\}.
\label{eq:upboundcop}
\end{equation}
\smallskip

We use an idea from \cite{cf:DGLT}, later improved in \cite{cf:GLT1, cf:GLT2, cf:BCPSZ} and known as the fractional moment method.
To prove that $h_c^{\cop}(\gl)  \leq h_0$ (with $h_0=u\gl$), one needs to prove that $\tf^{\cop}(\gl,h_0)=0$. It is actually enough to show that for some $\zeta\in(0,1)$,
\begin{equation}
\label{eq:condfracmoment}
\liminf_{N\to\infty} \bbE[(Z^{\go,\cop}_{N,\gl,h_0})^{\zeta}] <+\infty.
\end{equation}
Indeed, by Jensen's inequality,
\begin{equation}
\label{eq:condfracmoment2}
 \frac{1}{N}\bbE \log Z^{\go,\cop}_{N,\gl,h_0} = \frac{1}{\zeta N}\bbE \log (Z^{\go,\cop}_{N,\gl,h_0})^{\zeta}
 \leq \frac{1}{\zeta N} \log \bbE[(Z^{\go,\cop}_{N,\gl,h_0})^{\zeta}],
\end{equation}
If \eqref{eq:condfracmoment} holds then we get $\tf(\gl,h_0)=0$ by letting $N$ go to $+\infty$ in \eqref{eq:condfracmoment2}, thus $h_c^{\cop}(\gl)  \leq h_0$.

\medskip
The proof consists in two main steps: (a) we estimate the fractional moment on the finite length scale $1/\gl^2$ (Section \ref{sec:chgmeas}) and (b) we glue finite-size estimates together thanks to a coarse-graining argument (Section \ref{sec:coarse}).
The qualitative picture is the following. First, Lemma \ref{lem:finitefraccop} suggests that blocks of length $t/\gl^2$ have a negative contribution as long as $u \geq \frac{\zeta}{2} \gU_{\infty} +\frac12 \ccop$. Another bound, namely $u\geq \zeta \gU_{\infty}$, is then necessary to control the contribution of large excursions below the interface and between two consecutive visited blocks. Besides, we need $\zeta>1/(1+\ga)$ in order to make the coarse-graining argument work. All in all, we get the condition $u\geq \max\{\frac12 \frac{\gU_{\infty}}{1+\ga} +\frac12 \ccop ; \frac{\Upsilon_\infty}{1+\ga}\}$.

\subsection{Finite-size estimates of fractional moments}
\label{sec:chgmeas}

To increase readibility, we shall often omit the symbol of the integer part.

\begin{lemma}
\label{lem:finitefraccop}
For any constants $u\in\bbR$ and $t>0$, one sets $h_{\gl}=  u\gl$ and $k_{\gl}=t/\gl^2$.  Then
\begin{equation}
\limsup_{\gl\downarrow0} \bbE\left[ \left( Z_{k_{\gl},\gl,h_{\gl}}^{\go, \cop} \right)^{\zeta} \right] \leq
	\exp \bigg( \zeta \left(  \frac12 \ccop + \frac{\zeta}{2} \gU_{\infty} - u  \right) t \bigg).
\end{equation}
\end{lemma}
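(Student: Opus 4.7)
My plan is to use a Gaussian change-of-measure combined with H\"older's inequality, as a version of the fractional moment method tailored to correlated Gaussian disorder. Let $w\in\bbR$ be a tilt parameter to be optimized later, and introduce the Gaussian measure $\tilde\bbP$ under which $\go\sim N(w\lambda\ind,\gU)$. A direct Gaussian calculation gives
\begin{equation*}
\frac{d\bbP}{d\tilde\bbP}(\go)=\exp\Big(-w\lambda\langle\gU^{-1}\ind,\go\rangle+\tfrac{w^2\lambda^2}{2}\langle\gU^{-1}\ind,\ind\rangle\Big),
\end{equation*}
and H\"older's inequality with exponents $1/\zeta$ and $1/(1-\zeta)$ yields
\begin{equation*}
\bbE\big[(Z^{\go,\cop}_{k_\lambda,\lambda,h_\lambda})^\zeta\big]\le\big(\tilde\bbE[Z^{\go,\cop}_{k_\lambda,\lambda,h_\lambda}]\big)^\zeta\cdot\big(\tilde\bbE[(d\bbP/d\tilde\bbP)^{1/(1-\zeta)}]\big)^{1-\zeta}.
\end{equation*}
A standard Gaussian moment computation, combined with the estimate $\langle\gU^{-1}\ind_N,\ind_N\rangle=(1+o(1))N/\gU_\infty$ recalled in Remark~\ref{rem:gUinfty}, reduces the entropy factor to $\exp\big(\tfrac{\zeta w^2 t}{2(1-\zeta)\gU_\infty}(1+o(1))\big)$ on the scale $k_\lambda=t/\lambda^2$.

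For the tilted annealed partition function, writing $\go=w\lambda\ind+\eta$ under $\tilde\bbP$ with $\eta\sim N(0,\gU)$, the Gaussian formula analogous to~\eqref{eq:annCop} yields
\begin{equation*}
\tilde\bbE[Z^{\go,\cop}_{k_\lambda,\lambda,u\lambda}]=\E\big[\exp\big(2\lambda^2[\langle\gU\Delta,\Delta\rangle-(u+w)S^\Delta]\big)\gd_{k_\lambda}\big],
\end{equation*}
where $S^\Delta:=\sum_n\Delta_n$. The technical heart of the proof is to establish, as $\lambda\to 0$ with $k_\lambda=t/\lambda^2$,
\begin{equation*}
\tilde\bbE[Z^{\go,\cop}_{k_\lambda,\lambda,u\lambda}]\le\exp\Big(t\Big[\tfrac{\gU_\infty+\ccop}{2}-(u+w)\Big]+o(t)\Big).
\end{equation*}
The underlying heuristic is the law of large numbers: $S^\Delta/N\to 1/2$ and $\langle\gU\Delta,\Delta\rangle/N\to(\gU_\infty+\ccop)/4$, the latter following from $\E[\Delta_n\Delta_m]=\tfrac14+\tfrac14\,\P(n,m\text{ in the same excursion})$, the renewal theorem for $\mu<\infty$, and the definition~\eqref{defccop} of $\ccop$ (compare with the computation sketched in Remark~\ref{rem:negativecopolann}).

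Combining the two estimates,
\begin{equation*}
\log\bbE\big[(Z^{\go,\cop}_{k_\lambda,\lambda,h_\lambda})^\zeta\big]\le\zeta t\Big[\tfrac{w^2}{2(1-\zeta)\gU_\infty}+\tfrac{\gU_\infty+\ccop}{2}-u-w\Big]+o(t),
\end{equation*}
and setting the $w$-derivative to zero gives the optimal tilt $w^\ast=(1-\zeta)\gU_\infty$. Substituting, the bracket simplifies to $\tfrac{\ccop}{2}+\tfrac{\zeta\gU_\infty}{2}-u$, yielding the claimed bound upon taking $\limsup_{\lambda\to 0}$. The main obstacle is the sharp upper bound on $\tilde\bbE[Z]$: while the pointwise LLN identifies the typical value, promoting this to an upper bound on the exponential moment requires a sub-Gaussian concentration estimate for the random quadratic form $\langle\gU\Delta,\Delta\rangle$, carefully exploiting the summability of correlations (Assumption~\ref{hyp:correlations}) to show that its fluctuations contribute only $o(t)$ in the exponent on the scale $\lambda^2 N=t$.
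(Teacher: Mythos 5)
Your proposal follows the paper's proof almost verbatim: a Gaussian mean-shift change of measure of order $\lambda$ on the block $\{1,\dots,k_\lambda\}$, H\"older with exponents $1/\zeta$, $1/(1-\zeta)$, the estimate $\langle\gU_k^{-1}\ind_k,\ind_k\rangle=(1+o(1))k/\gU_\infty$ for the entropy cost, the law-of-large-numbers identification of $\tilde\bbE[Z]$ via Lemma~\ref{lem:convergenceapp}, and optimization of the tilt yielding the same $(1-\zeta)\gU_\infty$ (your $w^\ast$ is the negative of the paper's $a=\gU_\infty(\zeta-1)$, just a sign-convention difference in how the shift is parametrized). The numerology matches the claimed bound.

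The one place you go astray is the final paragraph, where you describe a ``sub-Gaussian concentration estimate for $\langle\gU\Delta,\Delta\rangle$'' as the ``main obstacle.'' No concentration argument is needed, and the paper does not use one. Since $\lambda^2 k_\lambda=t$ is a fixed constant and correlations are summable, the random exponent
\[
2\lambda^2\Big[\textstyle\sum_{i,j\le k_\lambda}\rho_{ij}\Delta_i\Delta_j-(u+w)\sum_{i\le k_\lambda}\Delta_i\Big]
\]
is \emph{uniformly bounded}: $\lambda^2\sum_{i\le k}\Delta_i\le t$ and $\lambda^2\big|\sum_{i,j\le k}\rho_{ij}\Delta_i\Delta_j\big|\le t\sum_{n\in\bbZ}|\rho_n|$. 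Lemma~\ref{lem:convergenceapp} gives the $\bP$-a.s.\ limits of $\tfrac1k\sum\Delta_i$ and $\tfrac1k\sum\rho_{ij}\Delta_i\Delta_j$, and then bounded convergence (together with the renewal theorem to handle $\delta_{k_\lambda}$) yields $\tilde\bbE[Z]\to\tfrac1\mu\exp(t\{\cdots\})$ directly. So the step you flag as technically delicate is in fact the easy part; the hard work is entirely in establishing Lemma~\ref{lem:convergenceapp}, which you cite but treat as standard.
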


\begin{proof}
Let $\tilde\bbP_{\delta,k}$ be the law obtained from $\bbP$ by tilting $(\go_1,\ldots,\go_k)$ by $\gd$, that is,
\begin{equation}
\label{eq:def.change.meas}
\frac{\dd\tilde\bbP_{\delta,k}}{\dd\bbP}(\go) = \frac{e^{\delta\sum_{i=1}^k \omega_i}}{\bbE\big[e^{\delta\sum_{i=1}^k \omega_i} \big]}.
\end{equation}
Using H\"older's inequality, one has for all $\zeta \in (0,1)$,
\begin{equation} \label{eq:trick}
	\bbE\big[ \big( Z_{k,\gl,h}^{\omega,\cop} \big)^\zeta \big]
	= \tilde\bbE_{\delta,k}\bigg[
	\big( Z_{k,\gl,h}^{\omega,\cop} \big)^\zeta \, \frac{\dd\bbP}{\dd\tilde\bbP_{\delta,k}} \bigg]
	\,\le\, \big[\tilde\bbE_{\gd,k}  (Z_{k,\beta,h}^{\omega,\cop}) \big]^\zeta
	\, \bigg[\tilde\bbE_{\delta,k}  \bigg( \frac{\dd\bbP}{\dd\tilde\bbP_{\delta,k}}
	\bigg)^{\frac{1}{1-\zeta}} \bigg]^{1-\zeta}.
\end{equation}
We also pick $\gd=a\gl$ for some constant $a$ that we optimize later on.

\medskip \noindent
{\bf (1)} Let us first estimate the second term in \eqref{eq:trick}, for large $k$ (that is small $\gl$). We have
\begin{equation}
\frac{\dd\bbP}{\dd\tilde\bbP_{\delta,k}}
= \exp\left( -\gd \langle \gU_k^{-1} \go, \ind_k\rangle  
    +\frac12 \gd^2 \langle \gU_k^{-1}\ind_k, \ind_k\rangle\right),
\end{equation}
where $\ind_k$ is a vector consisting of $k$ $1$'s. Therefore,
\begin{multline}
\label{eq:costshift}
	\tilde\bbE_{\delta,k} \bigg[ \bigg( \frac{\dd\bbP}{\dd\tilde\bbP_{\delta,k}}
	\bigg)^{\frac{1}{1-\zeta}} \bigg]^{1-\zeta} = \bbE\bigg[ \bigg( \frac{\dd\bbP}{\dd\tilde\bbP_{\delta,k}} \bigg)^{\frac{\zeta}{1-\zeta}} \bigg]^{1-\zeta} \\
	=\,
	\bbE\bigg[  e^{-\frac{\zeta \gd}{1-\zeta} \langle \gU_k^{-1}\go, \ind_k\rangle}  \bigg]^{1-\zeta}
	e^{ \frac{\zeta \gd^2}{2} \langle \gU_k^{-1} \ind_k,\ind_k \rangle } 
	= e^{\frac12 \frac{\zeta \gd^2}{1-\zeta} \langle \gU_k^{-1}\ind_k, \ind_k\rangle}.
\end{multline}
Using Assumption \ref{hyp:correlations}, $\langle \gU_\ell^{-1} \ind_\ell, \ind_\ell \rangle = (1+o(1)) \ell  \gU_{\infty} ^{-1} $ when $\ell$ goes to infinity, see \cite[Lemma~A.1]{cf:B13}. We obtain (recall that $k=k_{\gl}=t/\gl^2$ and $\gd=a\gl$):
\begin{equation}
\gd^2 \langle \gU_k^{-1} \ind_k, \ind_k\rangle \stackrel{\gl\downarrow 0}{=} t a^2 \gU_{\infty}^{-1} +o(1),
\end{equation}
which, in combination with \eqref{eq:costshift}, gives
\begin{equation}
\label{eq:costshift2}
\lim_{\gl\downarrow 0}\tilde\bbE_{\delta,k} \bigg[ \bigg( \frac{\dd\bbP}{\dd\tilde\bbP_{\delta,k}}
	\bigg)^{\frac{1}{1-\zeta}} \bigg]^{1-\zeta} = \exp\left(\frac12 \frac{\zeta}{1-\zeta} t a^2 \gU_{\infty}^{-1} \right).
\end{equation}

\medskip \noindent
{\bf (2)} For the first factor in \eqref{eq:trick},
since $h=h_{\gl} = u\gl$, one has
\begin{equation}
\tilde\bbE_{\delta, k}
	\big[ Z_{k,\gl,h}^{\omega,\cop} \big]  =\E\left[e^{-2\gl^2 (u -a)\sum_{i=1}^k  \gD_i  + 2\gl^2 \sum_{i,j=1}^k \rho_{ij} \gD_i\gD_j}\ind_{\{k\in\tau\}} \right].
\end{equation}
Recall that $\gl^2=t/k$. By Lemma \ref{lem:convergenceapp} (which gives the a.s.\ limits $\lim_{n\to\infty} \frac1n \sum_{i=1}^n \gD_i =\frac12$ and  $\lim_{n\to\infty} \frac1n \sum_{i,j=1}^n \rho_{ij} \gD_i \gD_j =\frac14 \ccop + \frac14 \gU_{\infty}$), one gets
\begin{equation}
\label{eq:modifmeascop}
	\lim_{\gl\downarrow 0}\tilde\bbE_{\delta, k}
	\big[ Z_{k,\, \gl,\, h}^{\omega,\cop} \big] 
	=\frac{1}{\mu} \exp\left( t \left\{a - u + \frac12\ccop +\frac12 \gU_{\infty}\right\} \right) \,.
\end{equation}
Combining \eqref{eq:trick} with \eqref{eq:costshift} and \eqref{eq:modifmeascop}, we get
\begin{equation} 
	\limsup_{\gl\downarrow 0}
	\bbE\big[ \big( Z_{k,\gl,h}^{\omega, \cop} \big)^\zeta \big]
	\,\le\, 
	\exp\left(\zeta t \left\{\frac12 \gU_{\infty} + \frac12\ccop - u + a +\frac{a^2}{2(1-\zeta)\gU_{\infty}} \right\} \right)
	\end{equation}
The optimal choice $a=\gU_{\infty}(\zeta - 1)$ finally gives Lemma \ref{lem:finitefraccop}.
\end{proof}

\subsection{The coarse-graining procedure}
\label{sec:coarse}
We proceed through several steps.

\medskip
{\bf STEP 0: Preliminaries.}
Let us abbreviate
\begin{equation}
\label{eq:def.c_gep}
c_0 = \max\left\{\frac{\gU_{\infty}}{1+\ga}; \frac{\gU_{\infty}}{2(1+\ga)} + \frac12 \ccop \right\},\qquad c_\gep = c_0 + \gep,\quad \gep>0.
\end{equation}
We need to show that there exists $\gl_0=\gl_0(\gep)$ sufficiently small
such that $\tf^{\cop}(\gl,c_{\gep}\gl)=0$ for $\gl\in(0,\gl_0)$. As explained at the beginning of this section, it is actually enough to find $\zeta_{\gep}\in(0,1)$ such that
\begin{equation}
\label{eq:condfracmoment2bis}
\liminf_{N\to\infty} \bbE[(Z^{\go,\cop}_{N,\gl,c_{\gep}\gl})^{\zeta_{\gep}}] <+\infty.
\end{equation}

We now fix $\gep>0$ and set
\begin{equation}
\label{eq:choice.par.cop}
h_{\gep}=h_{\gep}(\gl)=c_{\gep}\gl,\qquad \zeta_{\gep} = \frac{1}{1+\alpha} +\gep^2.
\end{equation}
The coarse-graining correlation length is chosen to be
$k=k_{\gl,\gep}=t_{\gep}/\gl^2$, where $t_{\gep}$ is a (large) constant, whose value is specified at the end of the proof.

In the rest of this section, the constants $C_0,C_1,...$ do not depend on $\gep$. We shall add a subscript $\gep$ for constants that do depend on $\gep$.

\bigskip
{\bf STEP 1: Setting up the coarse-graining.}
The size of the system is going to be a multiple of the correlation
length: $N = m\, k_{\gl,\gep}$, where $m\in\N$ is the macroscopic size.
The system is then partitioned into $m$ blocks $B_1, \ldots, B_m$
of size $k_{\gl,\gep}$, defined by
\begin{equation}
	B_i \,:=\, \big\{(i-1)\, k_{\gl,\gep} + 1\, , \ldots,\, i\, k_{\gl,\gep}\big\}
	\,\subseteq\, \{1,\ldots,N\} \,,
\end{equation}
so that the macroscopic (coarse-grained) ``configuration space'' is $\{1,\ldots,m\}$.
A macroscopic configuration is then a subset $J \subseteq \{1,\ldots,m\}$.
Let us define for $a,b\in\bbN_0$ with $a<b$:
\begin{equation}
\label{eq:z.onestretch}
z_a^b = \E\Big[ \exp\Big\{-2\gl \sum_{n=a+1}^b (\go_n+h)\gD_n \Big\}\,  \big| \, \exists k\geq 0:\, \tau_k=a, \tau_{k+1}=b \Big],
\end{equation}
which is the contribution of a large excursion between $a$ and $b$.
By decomposing the partition function according to the blocks visited by the polymer, we get
\begin{equation}
	Z_{N,\gl, c_{\gep}\gl }^{\omega,\cop} \,=\, \sum_{J \subseteq \{1,\ldots, m\}:
	\ m \in J} \hat Z_J,
\end{equation}
where for $J = \{j_1, \ldots, j_\ell\}$, with
$1 \le j_1 < j_{2} < \ldots < j_\ell = m$ and $\ell = |J|$, we set
\begin{equation}
\label{eq:1stdecomp}
	\hat Z_J  \,:=\, \sumtwo{d_1, f_1 \in B_{j_1}}{d_1 \le f_1}
	\ldots \sumtwo{d_{\ell-1}, f_{\ell-1} \in B_{j_{\ell-1}}}{d_{\ell - 1} \le f_{\ell - 1}}
	\sum_{d_\ell \in B_{j_\ell} = B_m}
	\Bigg( \prod_{i=1}^\ell K(d_i - f_{i-1})  z_{f_{i-1}}^{d_i} Z_{d_i, f_i} \Bigg) \,,
\end{equation}
with the conventions
\begin{equation}
\label{eq:Zdf}
f_0 = 0,\qquad  f_\ell = N, \qquad Z_{d_i, f_i} = Z_{f_i - d_i,\gl,k_\gep}^{\gt^{d_i}\go, \cop}.
\end{equation}
It follows that
\begin{equation} \label{eq:fracmom}
	\bbE \big[ \big( Z_{N,\gl, c_{\gep} \gl}^{\omega,\cop} \big)^{\zeta_\gep} \big]
	\,\le\, \sum_{J \subseteq \{1,\ldots, m\}:
	\ m \in J} \bbE \big[ \big( \hat Z_J \big)^{\zeta_\gep} \big] \,.
\end{equation}

\smallskip
We now focus on providing an upper bound on $\bbE \big[ \big( \hat Z_J \big)^{\zeta_\gep} \big]$.
Defining $\bar f_i = j_i k_{\gl,\gep}$ and $\bar d_i = (j_i -1) k_{\gl,\gep}$ for $i\in \{1,\ldots,\ell\}$, notice that
\begin{equation}
\label{eq:smallz.dec}
z_{f_{i-1}}^{d_i}\leq 4z_{f_{i-1}}^{\bar f_{i-1}} z_{\bar f_{i-1}}^{\bar d_{i}} z_{\bar d_{i}}^{d_{i}},
\end{equation}
 cf.\ \cite[Equation (3.16)]{cf:T1}. Let us then define
\begin{equation}
\label{eq:Zcheck}
\check Z_J \,:= \, \sumtwo{d_1, f_1 \in B_{j_1}}{d_1 \le f_1}
	\ldots \sumtwo{d_{\ell-1}, f_{\ell-1} \in B_{j_{\ell-1}}}{d_{\ell - 1} \le f_{\ell - 1}}
	\sum_{d_\ell \in B_{j_\ell} = B_m}
\prod_{i=1}^\ell K(d_i - f_{i-1})  z_{f_{i-1}}^{\bar f_{i-1}}   Z_{d_i, f_i} z_{\bar d_{i}}^{d_{i}} \, ,
\end{equation}
so that $\hat Z_J \leq 4^{\ell} \check Z_J \prod_{i=1}^\ell  z_{\bar f_{i-1}}^{\bar d_{i}}$
To decouple $\check Z_J$ and the $z_{\bar f_{i-1}}^{\bar d_{i}}$ 's we use the following lemma, which relies on Lemma \ref{lem:decouple.general} and is proved in Appendix \ref{sec:decouple.app}.
\begin{lemma}
\label{lem:decoupling0}
There exists $\gl_1=\gl_1(\gep)>0$ such that, for all $\gl\leq \gl_1$,
\begin{equation}
\bbE\Big[ \Big( \prod_{i=1}^\ell  z_{\bar f_{i-1}}^{\bar d_{i}}\Big)^{\zeta_\gep}  (\check Z_J)^{\zeta_\gep}\Big] \leq 2^\ell \bbE\Big[ \prod_{i=1}^\ell \Big( z_{\bar f_{i-1}}^{\bar d_{i}}\Big)^{\zeta_\gep} \Big] \bbE \Big[ (\check Z_J)^{\zeta_\gep} \Big].
\end{equation}
\end{lemma}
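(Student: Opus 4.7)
My plan is to apply the general Gaussian decoupling Lemma~\ref{lem:decouple.general} to the pair
\begin{equation*}
F_1 := \prod_{i=1}^\ell z_{\bar f_{i-1}}^{\bar d_i},\qquad F_2 := \check Z_J,
\end{equation*}
after observing that they depend on disjoint portions of the disorder. By \eqref{eq:z.onestretch}, $z_{\bar f_{i-1}}^{\bar d_i}$ only involves $(\omega_n)_{n\in (\bar f_{i-1},\bar d_i]}$, so $F_1$ is measurable with respect to $(\omega_n)_{n\in J^{(1)}}$ with $J^{(1)} := \bigsqcup_i (\bar f_{i-1},\bar d_i]$, the ``inter-block'' regions. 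Inspecting \eqref{eq:Zcheck}, each summand factor $z_{f_{i-1}}^{\bar f_{i-1}}$, $Z_{d_i,f_i}$ and $z_{\bar d_i}^{d_i}$ only involves disorder inside $(\bar d_i,\bar f_i]\subseteq B_{j_i}$, so $F_2$ is measurable with respect to $(\omega_n)_{n\in J^{(2)}}$ with $J^{(2)} := \bigsqcup_i (\bar d_i,\bar f_i]$, and $J^{(1)}\cap J^{(2)}=\emptyset$.

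The next step is to invoke Lemma~\ref{lem:decouple.general} to obtain an estimate of the form
\begin{equation*}
\bbE\bigl[ F_1^{\zeta_\gep} F_2^{\zeta_\gep}\bigr] \leq \exp\!\bigl(\zeta_\gep^2\, G(\gl,\ell)\bigr)\, \bbE\bigl[ F_1^{\zeta_\gep}\bigr]\, \bbE\bigl[ F_2^{\zeta_\gep}\bigr],
\end{equation*}
where $G(\gl,\ell)$ quantifies the Gaussian cost of decorrelating the restrictions of $\omega$ to $J^{(1)}$ and $J^{(2)}$. I would implement this by re-expressing $\omega|_{J^{(1)}\cup J^{(2)}}$ as the sum of an independent block-diagonal Gaussian (under which $F_1$ and $F_2$ factorize) and a correction carrying the cross-correlations, followed by H\"older's inequality with exponents close to $1$. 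The cost is driven by the cross-covariance matrix $(\rho_{m-n})_{m\in J^{(1)},\, n\in J^{(2)}}$: thanks to Assumption~\ref{hyp:correlations}, and using the fact that the interface between $J^{(1)}$ and $J^{(2)}$ consists of at most $2\ell$ block boundaries, the relevant norm of this matrix is $O(\ell \sum_n |\rho_n|)$, uniformly in $\gl$ and in the block-widths $k_{\gl,\gep}$.

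Finally, I would translate this into the scaling of $F_1, F_2$, which live at scale $\gl$ because the disorder enters through $\gl\omega_n$ in the partition functions. A careful bookkeeping should yield $G(\gl,\ell)\leq C(\gep)\, \gl^2\, \ell$ for a constant depending only on $\gep$ and $\sum_n|\rho_n|$, so that $\exp(\zeta_\gep^2 G(\gl,\ell))\leq 2^\ell$ provided $\gl\leq \gl_1(\gep)$ is small enough. The main technical obstacle will be to run Lemma~\ref{lem:decouple.general} with enough precision that the extra $\gl^2$ factor is recovered (rather than an $O(1)$ cost per block); this should rely on the uniform spectral bounds on $\gU_k$ and $\gU_k^{-1}$ recalled in Remark~\ref{rem:gUinfty}, combined with the fact that far-apart pairs $m\in J^{(1)},\, n\in J^{(2)}$ contribute negligibly because $|\rho_{m-n}|\to 0$.
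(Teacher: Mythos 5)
Your overall strategy is the same as the paper's: split the disorder into the block union $\cI=\bigcup_i B_{j_i}$ (supporting $\check Z_J$) and its complement (supporting $\prod_i z_{\bar f_{i-1}}^{\bar d_i}$), verify that each factor satisfies the derivative bounds \eqref{eq:ass.f.g} with $c=2\gl$, and invoke Lemma~\ref{lem:decouple.general} once. That much is correct, and your observation that the disorder scale $\gl$ is what produces the $c^2=4\gl^2$ factor is also right.

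The gap is in your estimate of the cross-correlation cost $C(\cI,\cJ)$. You assert that the ``interface'' mass is $O(\ell\,\sum_n|\rho_n|)$ \emph{uniformly in} $k_{\gl,\gep}$, ``because far-apart pairs contribute negligibly.'' Under Assumption~\ref{hyp:correlations} this is false: a single block boundary contributes
\begin{equation}
\sum_{i=1}^{k}\sum_{j>k} |\rho_{ij}| \;=\; \sum_{d\geq 1} \min(d,k)\, |\rho_d|,
\end{equation}
which is bounded only if $\sum_d d|\rho_d|<\infty$. With mere summability of $(\rho_n)$, this quantity is $o(k)$ but can diverge as $k\to\infty$ (e.g.\ $\rho_n\sim n^{-a}$ with $1<a\leq 2$). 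So writing $V_k := 2\sum_{i\leq k}\sum_{j>k}|\rho_{ij}|$, the correct bound is $C(\cI,\cJ)\leq \ell\, V_{k_{\gl,\gep}}$ with $V_k$ unbounded in $k$, not $C(\cI,\cJ)\leq C\ell$. Consequently your claim $G(\gl,\ell)\leq C(\gep)\,\gl^2\,\ell$ does not follow, and in fact the trivial bound $V_k\leq 2k\sum_n|\rho_n|$ only gives $4\gl^2 V_{k_{\gl,\gep}}\leq 8t_\gep\sum_n|\rho_n|$, a fixed constant that is \emph{not} small (indeed $t_\gep$ must be large for the later coarse-graining step). What saves the argument is the finer information that $V_k=\epsilon(k)\,k$ with $\epsilon(k)\to 0$, so
\begin{equation}
4\gl^2\, V_{k_{\gl,\gep}} \;=\; 4\,t_\gep\,\epsilon(k_{\gl,\gep}) \;\xrightarrow[\gl\downarrow 0]{}\; 0,
\end{equation}
using $\gl^2 k_{\gl,\gep}=t_\gep$; this is what yields $e^{c^2 C(\cI,\cJ)}\leq 2^\ell$ for $\gl$ small. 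You should replace the claimed uniform $O(\ell)$ bound by this $o(k)$ argument. A secondary remark: there is no need to ``re-express $\omega$ as independent block-diagonal plus a correction and then apply H\"older'' — that is a different (and delicate, since such an additive decomposition may fail to be positive semi-definite) route; Lemma~\ref{lem:decouple.general} already delivers the decoupling bound directly via Gaussian interpolation and Gronwall, once you have the $c=2\gl$ Lipschitz estimates.
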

We therefore get, using that $4^{\zeta_{\gep}}\leq 4$
\begin{equation}
\label{eq:Zhat2}
\bbE \Big[ ( \hat Z_J )^{\zeta_\gep} \Big]
\leq 8^{\ell} \bbE\Big[ \prod_{i=1}^{\ell} \Big( z_{\bar f_{i-1}}^{\bar d_{i}} \Big)^{\gz_{\gep}}\Big]  \bbE\Big[ (\check Z_J )^{\gz_{\gep}} \Big].
\end{equation}
We first estimate the term $\bbE\big[\prod_{i=1}^{\ell}  (z_{\bar f_{i-1}}^{\bar d_{i}} )^{\zeta_{\gep}} \big]$. Let us write $z_{\bar f_{i-1}}^{\bar d_{i}} = \frac12(1 +  e^{\cH_{i}})$, with $\cH_{i}:= -2\lambda \sum_{u=\bar f_{i-1}+1}^{\bar d_{i}} (\omega_u + c_{\gep} \lambda)$. Then, since $\zeta_{\gep}\leq 1$, we get that $(z_{\bar f_{i-1}}^{\bar d_{i}} )^{\zeta_{\gep}}\leq 2^{-\zeta_{\gep}} (1+e^{\zeta_{\gep} \cH_{i}})$. Using a binomial expansion of $\prod_{i=1}^{\ell}  (1+e^{\zeta_{\gep} \cH_{i}})$, we obtain
\begin{equation}
\label{eq:binomexpansionsmallz}
\bbE\Big[ \prod_{i=1}^{\ell} \big( z_{\bar f_{i-1}}^{\bar d_{i}} \big)^{\gz_{\gep}}\Big] \leq (2^{-\zeta_{\gep}})^{\ell} \sum_{I\subseteq \{1,\ldots \ell\}} \bbE\Big[\prod_{i\in I} \exp(\zeta_\gep \cH_i )\Big],
\end{equation}
where the product in the right-hand side is $1$ when $I=\emptyset$. Recalling the definition of $\cH_{i}$, we may write
\begin{equation}
\label{eq:productsmallz}
\bbE\Big[\prod_{i\in I} \exp( \zeta_{\gep} \cH_{i} )\Big] \leq  \exp\bigg\{ 2\gl^2\zeta_{\gep}  \sum_{i\in I}  \bigg(\zeta_{\gep}\!\!\!\! \sum_{u,v = \bar f_{i-1} +1}^{\bar d_i}\!\!\!\!  \rho_{uv} +\zeta_{\gep}\!\!\!\! \sumtwo{u \in (\bar f_{i-1}+1,\bar d_i]}{v \notin (\bar f_{i-1}+1, \bar d_i]} \!\! |\rho_{uv}|   -  c_\gep (\bar d_i -\bar f_{i-1}) \bigg) \bigg\}.
\end{equation}
Since the correlations are summable, there exists $n_1=n_1(\gep)$ such that, for $n\geq n_1$, one has $\sum_{u,v = 1}^{n}\rho_{uv} \leq n (\gU_{\infty}~+~\gep/4)$ as well as $\sum_{u=1}^{n} \sum_{v\notin\{1,\ldots,n\}} |\rho_{uv}| \leq \gep n/4 $. Therefore, there exists $\gl_2=\gl_2(\gep)$ such that, for $\gl\leq \gl_2 $, one has $k_{\gl,\gep}\geq n_1$. Then, in \eqref{eq:productsmallz}, since either $\bar d_{i+1} - \bar f_i=0$ or $\bar d_{i+1} - \bar f_i\geq k_{\gl,\gep} \geq n_1$, we obtain
\begin{multline}
\label{eq:useMonthusbound}
\bbE\Big[\prod_{i\in I} \exp( \zeta_{\gep} \cH_i )\Big] \leq \prod_{i\in I} \exp\Big\{ 2\gl^2\zeta_{\gep}  ( \zeta_{\gep} \gU_{\infty}  + \gep/2 -  c_\gep )(\bar d_{i+1} - \bar f_i) \Big\} \\
= \prod_{i\in I} \exp\Big\{ 2\gl^2\zeta_{\gep} \Big( \frac{\gU_{\infty}}{1+\alpha} + \gep^2 \gU_{\infty} +\gep/2 - c_0 - \gep \Big)(\bar d_{i+1} - \bar f_i)\Big\} \leq 1,
\end{multline}
where, at first, we used the definitions of $\zeta_{\gep}$ and $c_{\gep}$, and then we chose $\gep$ small enough so that $\gep^2 \gU_{\infty} -\gep/2\leq 0$. Note that the condition $c_{0}\geq \gU_{\infty}/(1+\ga)$ (the so-called Monthus bound) is crucial here: otherwise the terms in \eqref{eq:useMonthusbound} would diverge when $d_i-f_{i-1}$ goes to infinity.
Plugging this estimate in \eqref{eq:binomexpansionsmallz} gives
\begin{equation}
\bbE\Big[ \prod_{i=1}^{\ell} \big( z_{\bar f_{i-1}}^{\bar d_{i}} \big)^{\gz_{\gep}}\Big] \leq 2^{-\zeta_{\gep}\ell} \sum_{I\subseteq \{1,\ldots,\ell\}} 1 = 2^{-\zeta_{\gep}\ell} \, 2^{\ell} \leq 2^{\ell} .
\end{equation}

Combining this with \eqref{eq:Zhat2}, we have for $\gl\leq \min\{\gl_1,\gl_2\}$,
\begin{equation}
\label{eq:Zhat3}
\bbE \big[ \big( \hat Z_J \big)^{\zeta_\gep} \big]
\leq (16)^{\ell} \bbE\big[ (\check Z_J )^{\gz_{\gep}} \big],
\end{equation}
and we are now left with estimating $\bbE\big[ (\check Z_J )^{\gz_{\gep}} \big]$.

\bigskip
{\bf STEP 2: Applying the change of measure.}
Recall the definition of the tilted measures given in \eqref{eq:def.change.meas}. Here, we denote by $\tilde \bbP_J$ the law obtained from $\bbP$ by tilting $\go_n$, for each $n\in \bigcup_{i\in J} B_i$, by
\begin{equation}
\label{eq:def.a_gep}
\gd=a_{\gep }\gl, \qquad {\rm where} \quad a_{\gep}:=-(1-\zeta_{\gep})\gU_{\infty}.
\end{equation}
Note that this value of $a_\gep$ is chosen according to the proof of Lemma \ref{lem:finitefraccop}. By H\"older's inequality,
\begin{equation}
\label{eq:Holder}
\bbE \big[ \big( \check Z_J \big)^{\zeta_\epsilon} \big] \leq \big[\tilde\bbE_J( \check Z_J)\big]^{\zeta_{\gep}} \tilde\bbE_J\bigg[\bigg(\frac{\dd \bbP}{\dd \tilde \bbP_J} \bigg)^{\frac{1}{1-\zeta_{\gep}}}\bigg]^{1-\zeta_{\gep}}.
\end{equation}

\smallskip \noindent
{\bf (1)} The second factor in \eqref{eq:Holder} is computable, since
\begin{equation}
\frac{\dd \bbP}{\dd \tilde \bbP_J} = \exp\Big( - \langle \gU_N^{-1} \go ,\gd \ind_J \rangle + \frac12 \langle \gU_N^{-1}\gd \ind_J,\gd \ind_J\rangle\Big),
\end{equation}
where $\ind_J$ is the indicator function of $\cup_{j\in J} B_j$. Similarly to \eqref{eq:costshift}, we have
\begin{equation}
\label{eq:computecghtmeas}
\tilde\bbE_J\bigg[\bigg(\frac{\dd \bbP}{\dd \tilde \bbP_J} \bigg)^{\frac{1}{1-\zeta_{\gep}}}\bigg]^{1-\zeta_{\gep}}
= \exp \bigg( \frac12 \frac{\zeta_\gep}{1-\zeta_\gep} \gd^2\langle \gU_N^{-1} \ind_J, \ind_J\rangle \bigg).
\end{equation}
We now estimate $\langle \gU_N^{-1} \ind_J,\ind_J\rangle$ using the idea of \cite[Lemma A.3]{cf:B13}. Define
\begin{equation}
 R_J = \gU_{\infty} \ind_J - \gU_N \ind_J .
\end{equation}
Because of the summability of correlations, the $\ell^1$-norm of $R_J$ is bounded from above by $\sum_{i\in J} \sum_{n\in B_i} \sum_{m\notin B_i} |\rho_{nm}|$, which is $|J| o(k_{\gep,\gl}) $ as $k_{\gep,\gl}$ goes to infinity.
Then, 
\begin{equation}
\gU_{\infty} \gU_N^{-1} \ind_J = \ind_J - \gU_N^{-1} R_J
\end{equation}
and
\begin{equation}
\label{eq:approxgUJJ}
\gU_{\infty} \langle \gU_N^{-1} \ind_J,\ind_J\rangle = |J|k_{\gl,\gep} - \langle \gU_N^{-1} R_J,\ind_J\rangle =(1+o(1)) |J| k_{\gl,\gep} .
\end{equation}
To obtain the last equality, we use that $\ind_{J} = \gU_{\infty}^{-1} (\gU_N \ind_J - R_J)$, and get
\begin{equation}
|\langle \gU_N^{-1} R_J,\ind_J\rangle|= \gU_\infty^{-1} (\langle R_J, \ind_J \rangle - \langle \gU_N^{-1} R_J, R_J \rangle)= |J|\, o(k_{\gl,\gep}).
\end{equation}
Indeed, $|\langle R_j,\ind_J \rangle| = |J|\, o(k_{\gep,\gl})$ and $|\langle \gU_N^{-1} R_J,R_J \rangle|\leq ||| \gU_N^{-1}|||\, || R_J ||_{\ell^1} = |J| \, o(k_{\gep,\gl}) $, thanks to  Assumption \ref{hyp:correlations}, which gives a uniform bound on $||| \gU_N^{-1}|||$.

Therefore, there exists $\gl_3=\gl_3(\gep)$ such that, for $\gl\leq \gl_3$, we have $\langle \gU_N^{-1} \ind_J,\ind_J\rangle \leq (1+\gep^2) \gU_{\infty}^{-1} |J| k_{\gl,\gep}$.  Plugging this in \eqref{eq:computecghtmeas}, and recalling that $\gd=-(1-\zeta_{\gep}) \gU_{\infty} \gl$, $k_{\gl,\gep}=t_{\gep}/\gl^2$,  we have, for $\gl\leq \gl_3$,
\begin{equation}
\label{eq:part1-chgtmeas}
\tilde\bbE_J\bigg[\bigg(\frac{\dd \bbP}{\dd \tilde \bbP_J} \bigg)^{\frac{1}{1-\zeta_{\gep}}}\bigg]^{1-\zeta_{\gep}}\leq \exp\left(  (1+\gep^2) \frac{\zeta_{\gep}}{2}  (1-\gz_{\gep})\gU_{\infty} t_{\gep} \ell \right) \leq (C_{\gep})^{\ell},
\end{equation}
with 
\begin{equation}
C_{\gep} = \exp \bigg( t_{\gep} \gz_{\gep} (1+\gep^2) \frac{\ga \gU_{\infty}}{2(1+\ga)} \bigg), 
\end{equation}
(we used that $1-\gz_{\gep} \leq 1-1/(1+\ga) = \ga/(1+\ga)$).

\smallskip \noindent
{\bf (2)} Let us now deal with the first factor in \eqref{eq:Holder}, that is $\tilde\bbE_J [ \check Z_J]$. From \eqref{eq:Zcheck}, we need to estimate in particular
\begin{equation}
\label{eq:step2.2}
\tilde \bbE_J \left[  \prod_{i=1}^\ell z_{f_{i-1}}^{\bar f_{i-1}}   Z_{d_i, f_i} z_{\bar d_{i}}^{d_{i}}  \right]
\end{equation}
for every $d_i\leq f_i$ in $B_{j_i}$, where $j_i\in J$. An extra difficulty comes from the lack of independence of the $\omega$'s. The following lemma, proved in Appendix \ref{sec:decouple.app} thanks to Lemma \ref{lem:decouple.general}, allows to decouple the factors in the product above.
\begin{lemma}\label{lem:decouple}
There exists $\gl_1=\gl_1(\gep)$ (the same as in Lemma \ref{lem:decoupling0}) such that, for all $\gl\leq \gl_1$,
\begin{equation}
\tilde\bbE_J \bigg[  \prod_{i=1}^\ell z_{f_{i-1}}^{\bar f_{i-1}}   Z_{d_i, f_i} z_{\bar d_{i}}^{d_{i}}  \bigg] \leq 8^\ell \prod_{i=1}^\ell \tilde \bbE_J \big[ z_{f_{i-1}}^{\bar f_{i-1}} \big]
\tilde \bbE_J \big[ Z_{d_i, f_i}\big] \tilde \bbE_J \big[ z_{\bar d_{i}}^{d_{i}} \big].
\end{equation}
\end{lemma}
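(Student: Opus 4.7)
The plan is to apply the general decoupling result, Lemma \ref{lem:decouple.general}, iteratively to disentangle the $3\ell$ factors in the product. First, I would identify the support of each factor in $\go$: $z_{f_{i-1}}^{\bar f_{i-1}}$ depends only on $(\go_n)_{n\in(f_{i-1},\bar f_{i-1}]}$, the charges on the tail of block $B_{j_{i-1}}$; $z_{\bar d_{i}}^{d_{i}}$ depends only on $(\go_n)_{n\in(\bar d_i,d_i]}$, the charges on the head of block $B_{j_i}$; and $Z_{d_i,f_i}$ depends only on $(\go_n)_{n\in(d_i,f_i]}$, sitting between the two. Reading these intervals in increasing order yields a family of pairwise disjoint subsets of $\{1,\ldots,N\}$.

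Second, I would observe that under $\tilde\bbP_J$ the vector $\go$ is still jointly Gaussian with the same covariance matrix $\gU_N$ as under $\bbP$ (only its mean is shifted), so any Gaussian decoupling estimate valid under $\bbP$ transfers verbatim to $\tilde\bbP_J$. Lemma \ref{lem:decouple.general} would then provide, for any two bounded nonnegative functions $F_1,F_2$ depending on $\go$ on disjoint intervals $I_1,I_2$, a bound of the form $\tilde\bbE_J[F_1 F_2]\leq C\,\tilde\bbE_J[F_1]\,\tilde\bbE_J[F_2]$ with some absolute constant $C\leq 2$, valid as soon as $\gl\leq\gl_1(\gep)$. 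The role of this smallness is to guarantee that the coarse-graining scale $k_{\gl,\gep}=t_\gep/\gl^2$ is large enough to make the boundary contribution $\sum_{n\in I_1,m\in I_2}|\rho_{nm}|$ negligible compared to $|I_1|$ and $|I_2|$.

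Third, I would iterate pairwise decoupling, peeling off one factor at a time along the natural left-to-right ordering of the supports. After at most $3\ell-1$ applications this gives
\begin{equation*}
\tilde\bbE_J\bigg[\prod_{i=1}^{\ell} z_{f_{i-1}}^{\bar f_{i-1}}\, Z_{d_i,f_i}\, z_{\bar d_i}^{d_i}\bigg]\,\leq\, C^{3\ell-1}\prod_{i=1}^{\ell}\tilde\bbE_J\big[z_{f_{i-1}}^{\bar f_{i-1}}\big]\,\tilde\bbE_J\big[Z_{d_i,f_i}\big]\,\tilde\bbE_J\big[z_{\bar d_i}^{d_i}\big],
\end{equation*}
and the bound $C^{3\ell-1}\leq 2^{3\ell}=8^\ell$ yields the announced estimate.

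The main technical obstacle lies in the \emph{intra-block} decouplings: the triplet $z_{\bar d_i}^{d_i}, Z_{d_i,f_i}, z_{f_i}^{\bar f_i}$ is supported on three adjacent sub-intervals of the same block $B_{j_i}$, so there is no large inter-block distance available to push the cross-correlations to zero. Controlling these steps must rely exclusively on the global summability $\sum_{n\in\bbZ}|\rho_n|<\infty$ and on Assumption \ref{hyp:correlations}, which together guarantee that the Schur complements of $\gU_N$ associated with such adjacent sub-intervals have spectrum uniformly bounded away from $0$ (in the spirit of the estimate used in Step 2 of Section \ref{sec:uppercopol} and in Lemma A.1 of \cite{cf:B13}). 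This is precisely what Lemma \ref{lem:decouple.general} is designed to encode, and it is where the hypothesis $\gl\leq\gl_1(\gep)$ becomes essential: it ensures that the total boundary cross-correlation is small relative to the block length $k_{\gl,\gep}$, so that the per-factor cost $C$ stays below $2$.
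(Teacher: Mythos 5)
Your high-level scheme -- peel off the $3\ell$ factors one at a time by iterating the general decoupling lemma $3\ell-1$ times, accepting a per-step cost $C\leq 2$ so that $C^{3\ell-1}\leq 8^\ell$ -- is exactly what the paper does. However, your description of the mechanism behind the per-step bound is incorrect, and this is a genuine gap rather than a presentational difference.

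Lemma~\ref{lem:decouple.general} is \emph{not} a blanket decoupling for ``bounded nonnegative functions supported on disjoint intervals.'' It requires the log-gradient bounds \eqref{eq:ass.f.g}, $|\partial_{\go_i}f|\leq c\,f$ and $|\partial^2_{\go_i\go_{i'}}f|\leq c^2 f$, and the cost it produces is $e^{c^2 C(\cI,\cJ)}$. Without such a hypothesis no inequality of this kind can hold: for $f=g=e^{M\go_0}$ the covariance is arbitrarily large. The key observation the paper makes, and that your argument is missing, is that the specific functions appearing here ($z_a^b$ and the block partition functions $Z_{d_i,f_i}$, and also their $\gamma$-th powers) satisfy \eqref{eq:ass.f.g} with $c=2\gl$, so the per-step cost is $e^{4\gl^2\,C(\cI,\cJ)}$. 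The factor $\gl^2$ is what turns this into something small: with $C(\cI,\cJ)\leq V_s:=2\sum_{i\leq s}\sum_{j>s}|\rho_{ij}|$, $s\leq k_{\gl,\gep}=t_\gep/\gl^2$ and $V_s=o(s)$ (by summability), one shows $4\gl^2 V_s\leq \log 2$ uniformly in $s\leq k_{\gl,\gep}$ via a two-regime argument (small $s$: $V_s$ bounded, $\gl^2$ small; large $s$: $V_s/s$ small and $\gl^2 s\leq t_\gep$).

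Consequently, your framings that ``the boundary contribution is negligible compared to $|I_1|$ and $|I_2|$'' and that the proof rests on spectral/Schur-complement bounds for $\gU_N$ are not what Lemma~\ref{lem:decouple.general} encodes -- that lemma is a Gaussian interpolation (Gronwall) estimate whose output constant already carries the $\gl^2$. The point you flag as the main obstacle (intra-block adjacency) is in fact immaterial: adjacent or distant, the cost per step is always $e^{4\gl^2 C(\cI,\cJ)}$, and the relevant control is on $4\gl^2 V_s$, not on inter-block distance. Your observation that the tilting does not change the covariance, so the decoupling survives under $\tilde\bbP_J$, is correct and used implicitly in the paper.
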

Let us now estimate the different terms in this product.

\smallskip \noindent
{\bf (a)} We first take care of the first and third terms, that is $\tilde \bbE_J \big[ z_{0}^n \big]$, setting $n=\bar f_{i-1}- f_{i-1}$ or $n= d_i - \bar d_i$. Writing $z_0^n=\frac12+\frac12 \exp\big( -2\gl\sum_{i=1}^n (\go_i - c_{\gep}\gl) \big)$, one directly has
\begin{equation}
\tilde \bbE_J \big[ z_{0}^n \big] = \frac12 +\frac12 \exp\bigg( 2\gl^2n  (a_\gep -c_{\gep}) +2\gl^2 \sum_{i,j=1}^n \rho_{ij} \bigg) .
\end{equation}
Note that $a_{\gep}-c_{\gep} \leq -\gU_{\infty} + \gep^2 \gU_{\infty} -\gep \leq -\gU_{\infty} - 3\gep/4$, provided that $\gep$ is chosen small enough (we used here that $c_{\gep}\geq \frac{\gU_{\infty}}{1+\ga}$). There exists $n_1=n_1({\gep})$ such that for $n\geq n_{1}$, $\sum_{i,j =1}^n \rho_{ij} \leq n( \gU_{\infty}+\gep/2)$. For $n\geq n_2$, one has
\begin{equation}
\tilde \bbE_J \big[ z_{0}^n \big]\leq \frac12 + \frac12 \exp\bigg( 2\gl^2 n (-\gU_{\infty} - 3\gep/4 + \gU_{\infty} +\gep/2)\bigg)  \leq 1.
\end{equation}
On the other hand, for $n\leq n_{1}$, the uniform bound $\sum_{i,j =1}^n \rho_{ij} \leq n \sum_{k\in\bbZ} | \rho_k |$ holds, and 
\begin{equation}
\tilde \bbE_J \big[ z_{0}^n \big] \leq \frac12 +\frac12 \exp\big( 2\gl^2 n_{1}  \sum_{k\in\bbZ} | \rho_k |  \big) .
\end{equation}
Therefore, for all $\gl \leq\gl_4(\gep) = 1/\sqrt{n_1(\gep)}$, we have
\begin{equation}
\label{eq:bounded.z0n}
 \tilde \bbE_J \big[ z_{0}^n \big]\leq C_1 ,\qquad \rm{where}\quad C_1:= e^{2 \sum_{k\in\bbZ} | \rho_k |}.
\end{equation} 

Using Lemma \ref{lem:decouple}, one therefore has for $\gl\leq \min\{\gl_1,\gl_4\}$,
\begin{equation}
\label{eq:eachtermZcheck}
\tilde\bbE_J \bigg[  \prod_{i=1}^\ell z_{f_{i-1}}^{\bar f_{i-1}}   Z_{d_i, f_i} z_{\bar d_{i}}^{d_{i}}  \bigg] \leq (8 C_1^2)^\ell \prod_{i=1}^\ell 
\tilde \bbE_J \big[ Z_{d_i, f_i}\big].
\end{equation}

\smallskip \noindent
{\bf (b)} We now turn to $\tilde\bbE_J\big[Z_{0, n}\big]$ for $n=f_i-d_i\in \{0,\ldots, k_{\gl,\gep}\}$. Bounding $\ind_{\{n\in\tau\}}$ by $1$, we have
\begin{equation}
\label{eq:shift.expectation}
\tilde\bbE_J [ Z_{0,n}] \leq \E\Big[\exp \Big\{-2\gl^2 (c_{\gep} -a_{\gep})\sum_{i=1}^n  \gD_i  + 2\gl^2 \sum_{i,j=1}^n \rho_{ij} \gD_i\gD_j \Big\}\Big].
\end{equation}
We now distinguish according to the value of $n$.

(i) Case $n\in \{(1-\gep^2)k_{\gl,\gep},\ldots, k_{\gl,\gep}\}$. By Lemma \ref{lem:convergenceapp}, there exists $\gl_5=\gl_5(\gep)>0$ such that, for $\gl\leq \gl_5$ and $n\in \{(1- \gep^2) k_{\gl,\gep},\ldots, k_{\gl,\gep}\}$, we have (recall $k_{\gl,\gep} =t_{\gep}/\gl^2$)
\begin{equation}
\tilde\bbE_J [ Z_{0,n} ] \leq \exp \Big\{ (1-\gep^2)t_{\gep} \Big(a_\gep - c_\gep +
\frac{\ccop}{2} + \frac{\gU_\infty}{2}\Big) \Big\}.
\end{equation}
The fact that the bound is uniform comes from the Lipschitz character of the functions $x\mapsto e^{C x}$, when $C$ and $x$ both range over a bounded set. Recall the definitions of $c_{\gep}$ and $a_{\gep}$ in \eqref{eq:def.c_gep} and \eqref{eq:def.a_gep}, a straightforward computation gives
\begin{equation}
\label{eq:playwithaepsceps}
 a_\gep - c_\gep + \frac{\ccop}{2} + \frac{\gU_\infty}{2} \leq  -\frac{\ga \gU_{\infty}}{2(1+\ga)}- \gep +\gep^2 \gU_{\infty}\leq -\frac{\ga \gU_{\infty}}{2(1+\ga)} - \gep/2,
\end{equation}
where in the last inequality we took $\gep$ small enough.
In the end, we get that for $\gl\leq \gl_5$
\begin{equation}
\label{eq:tildeZgood}
\forall n\in \{(1- \gep^2) k_{\gl,\gep},\ldots, k_{\gl,\gep}\}, \quad \tilde\bbE_J\big[ Z_{0,n}\big] \leq \exp\Big\{ - t_{\gep} \, \Big(\frac{\ga \gU_{\infty}}{2(1+\ga)}+ \gep/2\Big) \Big\}.
\end{equation}

(ii) General bound for $n\leq k_{\gl,\gep}$: we show that $\tilde\bbE_J[Z_{0,n}]\leq C_1$ for all $n\in\{1,\ldots,k_{\gl,\gep}\}$, provided that $\gl$ is small enough (with the constant $C_1$ defined in \eqref{eq:bounded.z0n}).
Indeed, by Lemma \ref{lem:convergenceapp}, there exists $n_2=n_2(\gep)$ such that, for $n\geq n_2$,
\begin{gather}
\bP \bigg( \frac{1}{n}\sum_{i=1}^n  \gD_i  \leq 1/2-\gep^2 \bigg) \leq \frac12 e^{-2t_{\gep} \sum_{k\in\bbZ} |\rho_k|} \label{eq:STEP2bii.one} \\
\bP \bigg( \frac1n \sum_{i,j=1}^n \rho_{ij} \gD_i\gD_j \geq \frac14 \ccop +\frac14 \gU_{\infty} +\gep^2 \bigg)\leq \frac12 e^{-2t_{\gep} \sum_{k\in\bbZ} |\rho_k|}. \label{eq:STEP2bii.two}
\end{gather}
Therefore, if $n_2 \leq n\leq k_{\gl,\gep}$, let us decompose the expectation in \eqref{eq:shift.expectation} according to whether $\frac{1}{n}\sum_{i=1}^n  \gD_i \geq 1/2-\gep^2$ and $\frac1n \sum_{i,j=1}^n \rho_{ij} \gD_i\gD_j \leq \frac14 \ccop +\frac14 \gU_{\infty} +\gep^2 $ or not, which gives
\begin{multline}
\tilde\bbE_J[ Z_{0,n}] \leq e^{-\gl^2n \{(c_{\gep}-a_{\gep})(1-2\gep^2) + \frac12 \ccop +\frac12 \gU_{\infty} +2\gep^2\} } +  e^{-2t_{\gep} \sum_{n\in\bbZ} |\rho_n|}  e^{2t_{\gep} \sum_{n\in\bbZ} |\rho_n|}.
\end{multline}
For the second part of the sum, we used the uniform upper bounds $2\gl^2 (c_{\gep} -a_{\gep})\sum_{i=1}^n  \gD_i \geq 0$, $\sum_{i,j=1}^n \rho_{ij} \gD_i\gD_j\leq \sum_{n\in\bbZ} |\rho_n|$, and $\gl^2 n\leq t_{\gep}$. Recalling \eqref{eq:playwithaepsceps}, and taking $\gep$ small enough, we conclude that $\tilde\bbE_J[ Z_{0,n}]\leq 2$ if $ n_2 \leq n \leq k_{\gl,\gep}$.

For the case $n\leq n_2(\gep)$, let us set $\gl_6=1/\sqrt{ n_2}$. Then, \eqref{eq:shift.expectation} gives the following uniform bound, for $\gl\leq \gl_6$ and $n\leq  n_2$,
\begin{equation}
\label{eq:tildeZgeneral}
\tilde\bbE_J [ Z_{0,n}] \leq \exp\Big( 2\gl^2 n_3 \sum_{n\in\bbZ} |\rho_n|\Big) \leq C_1=\exp\Big(2 \sum_{k\in\bbZ} | \rho_k | \Big).
\end{equation}

\medskip
To summarize, we collect the estimates in \eqref{eq:Zcheck}, \eqref{eq:eachtermZcheck}, \eqref{eq:tildeZgood} and \eqref{eq:tildeZgeneral}. We get that if $\gep$ is fixed and small enough, then there exists $\gl_7(\gep)=\min(\gl_i, i\in\{1,\ldots, 6\})$ such that for $\gl\leq \gl_7(\gep)$,
\begin{equation}
\label{eq:Zcheck2}
\tilde\bbE_J \big[ \check Z_J \big] \leq (C_2)^{ \ell}  \sumtwo{d_1, f_1 \in B_{j_1}}{d_1 \le f_1}
	\ldots \!\!\!\! \sumtwo{d_{\ell-1}, f_{\ell-1} \in B_{j_{\ell-1}}}{d_{\ell - 1} \le f_{\ell - 1}}
	\sum_{d_\ell \in B_{j_\ell} = B_m}
\prod_{i=1}^\ell 
K(d_i-f_{i-1}) U(d_i-f_i),
\end{equation}
where $C_2:= (8 C_1^3)^{\zeta_{\gep}}$, and
\begin{equation}
\label{defU}
U(n) =
\begin{cases}
D_{\gep} & \quad \text{ if } n \in \{(1-\gep^2)k_{\gl,\gep},\ldots, k_{\gl,\gep}\} ;\\
  1  & \quad \text{ for all } n < (1-\gep^2) k_{\gl,\gep} ;
\end{cases}
\end{equation}
with (recall \eqref{eq:tildeZgood})
\begin{equation}
D_{\gep}:=\exp\Big\{ - t_{\gep} \, \Big(\frac{\ga \gU_{\infty}}{2(1+\ga)}  + \gep/2\Big) \Big\}.
\end{equation}

\bigskip
{\bf STEP 3: Simplifying $\tilde \bbE_J [\check Z_J]$ as a coarse-grained system.} In this part we transform \eqref{eq:Zcheck2} into a partition function for a coarse-grained system, where the set $J$ plays the role of a renewal set.

Let us denote by $\oJ$ the set of indices $j\in J$ such that $j-1\in J\, \cup\{0\}$ and $j+1\in J\cup\{m+1\}$. Recall that we denote by $j_1,j_2,\ldots,j_\ell$ the elements of $J$.
If $j_i\in J\setminus \oJ$, we bound $U(d_i-f_i)$ uniformly by $1$.
On the other hand, if $j_i\in \oJ$, then $j_{i-1}\in J\, \cup \{0\}$, and as in \cite[Section 2, STEP 3]{cf:BCPSZ}, one shows that:

\smallskip
{\bf (Step 3.1)} The main contribution in \eqref{eq:Zcheck2} comes from the $d_i$'s and $f_{i-1}$'s such that $|d_i-f_{i-1}|\leq \gep^2 k_{\gl,\gep}/2$.
We recall this idea for the sake of completeness. Using that for all $n$, $U(n) \geq D_\gep$, we get
\begin{equation}
\label{eq:bound1}
\sumtwo{f_{i-1}\in B_{j_{i-1}}\, ,\,d_i\in B_{j_i}}{|d_i-f_{i-1}|\leq \gep^2 k_{\gl,\gep}/2}  U(f_{i-1}-d_{i-1}) K(d_i-f_{i-1}) U(f_{i}-d_{i}) \geq K(1) D_{\gep}^2
\end{equation}
where in the sum we kept only the term corresponding to $d_i-f_{i-1}=1$. Let us now turn to the sum restricted to $|d_i-f_{i-1}|> \gep^2 k_{\gl,\gep}/2$.
By \eqref{defK}, there is  a constant $C_3$ such that
\begin{equation}
\forall n\geq \gep^2 k_{\gl,\gep}, \qquad K(n)\leq C_3\, \varphi(\gep^2 k_{\gl,\gep}) (\gep^2 k_{\gl,\gep})^{-(1+\ga)}.
\end{equation}
Therefore,
\begin{multline}
\label{eq:bound2}
\sumtwo{f_{i-1}\in B_{j_{i-1}}\, ,\,d_i\in B_{j_i}}{|d_i-f_{i-1}|> \gep^2 k_{\gl,\gep}/2}  K(d_i-f_{i-1})\leq C_3\, \varphi(\gep^2 k_{\gl,\gep}) (\gep^2 k_{\gl,\gep})^{-(1+\ga)}  k_{\gl,\gep}^2\\ \leq  C_3\, \gep^{-2(1+\ga)} \varphi(\gep^2 k_{\gl,\gep}) k_{\gl,\gep}^{1-\ga},
\end{multline}
where we used that there were at most $k_{\gl,\gep}^2$ terms in the sum. Notice that $\varphi(\gep^2 k_{\gl,\gep}) k_{\gl,\gep}^{1-\ga}$ can be made arbitrarily small by choosing $k_{\gl,\gep}$ large (in the case $\ga =1$, one uses that $\lim_{n\to\infty}\varphi(n)=0$, see \cite[Proposition 1.5.9b]{cf:Bingham}).
Therefore there exists $\gl_8=\gl_8(\gep)$ such that for $\gl\leq \gl_8$,
\begin{equation}
C_3\, \gep^{-(1+\ga)} \varphi(\gep^2 k_{\gl,\gep}) k_{\gl,\gep}^{1-\ga} \leq K(1) D_{\gep}^2.
\end{equation}
Then, the sum in \eqref{eq:bound2} is smaller than the sum in \eqref{eq:bound1}, which proves our claim.

\smallskip
{\bf (Step 3.2)} If $\gl\leq \gl_8$, one may therefore restrict the summation in \eqref{eq:Zcheck2} to $|d_i-f_{i-1}|\leq \gep^2 k_{\gl,\gep}/2$ and $|d_{i+1}-f_{i}|\leq \gep^2 k_{\gl,\gep}/2$ whenever $j_i \in \oJ$, at the cost of an extra factor $2$ each time. The total factor thereby introduced in \eqref{eq:Zcheck2} is then smaller than $2^{\ell}$. Thus, for $j_i\in\oJ$, the summation is only over $d_i$'s and $f_i$'s verifying $f_i-d_i \geq (1-\gep^2) k_{\gl,\gep}$, and \eqref{defU} allows us to replace $U(f_i-d_i)$ by $D_{\gep}$. This is the crucial replacement. Dropping the restriction in the summation yields
\begin{equation}
\label{eq:Zcheck3}
\tilde\bbE_J \big[  \check Z_J  \big] \leq (2C_2)^{\ell} (D_{\gep})^{|\oJ|}\sumtwo{d_1, f_1 \in B_{j_1}}{d_1 \le f_1}
	\ldots \!\!\!\! \sumtwo{d_{\ell-1}, f_{\ell-1} \in B_{j_{\ell-1}}}{d_{\ell - 1} \le f_{\ell - 1}}
	\sum_{d_\ell \in B_{j_\ell} = B_m}
\prod_{i=1}^\ell 
K(d_i-f_{i-1}).
\end{equation}

Note that the sum above is now close to the probability of a renewal event. One could actually insert in \eqref{eq:Zcheck3} the terms $\bP(f_i-d_i \in \tau)$, each time only at the cost of a factor $C_4>0$. Indeed, the Renewal Theorem tells that $\lim_{n\to\infty} \bP(n\in\tau) =1/\mu>0$, so there exists a constant $C_4$ such that $\bP(n\in\tau) \geq 1/C_4$ for all $n$. We actually deal with this sum more directly. Since the $K(d_i - f_{i-1})$'s are the only terms containing $d_i$ and $f_{i-1}$, one can drop the restriction $f_{i}\geq d_{i}$ and get
\begin{equation}
\label{eq:Zcheck4}
\tilde\bbE_J \big[ \check Z_J \big] \leq (2C_2 )^{\ell} (D_{\gep})^{|\oJ|}\
\prod_{i=1}^\ell 
\sumtwo{f_{i-1\in} B_{j_{i-1}}}{ d_i \in B_{j_i} } K(d_i-f_{i-1}),
\end{equation}
where we used the notation $B_{j_0}=\{0\}$, so that $f_0=0$.

\smallskip
{\bf (Step 3.3)} Now, we estimate each sum, depending on whether $j_i - j_{i-1} =1$ or not.

\textbullet\ If $j_i - j_{i-1} =1$, then we can assume by translation invariance that $j_{i-1} =1$ and $j_i=2$.
One uses that $\bP(n\in\tau) \geq 1/C_4$ for all $n\in\bbN$, to get that

\begin{multline}
\label{eq:gapsmall}
\sum_{ f\in B_{1}, d\in B_2 } K(d-f) \leq C_4 \sum_{ f\in B_{1}, d\in B_2 } \bP(f\in\tau) K(d-f)  \\
\leq C_4 \bP(\tau\cap B_1 \neq \emptyset,\, \tau\cap B_2 \neq \emptyset) \leq C_4.
\end{multline}

\textbullet\  If $j_i-j_{i-1}\geq 2$, then $d_{i}-f_{i-1}\geq (j_{i}-j_{i-1}-1) k_{\gl,\gep}$. By \eqref{defK} there is a constant $C_5$ such that
\begin{equation}
K(d_i-f_{i-1}) \leq \frac{C_5}{(k_{\gl,\gep})^{1+\ga}} \frac{\varphi((j_{i}-j_{i-1}-1)k_{\gl,\gep}) }{(j_{i}-j_{i-1}-1)^{1+\ga}} \leq \frac{2^{1+\ga}C_5}{(k_{\gl,\gep})^{1+\ga}} \frac{\varphi((j_{i}-j_{i-1}-1)k_{\gl,\gep}) }{(j_{i}-j_{i-1})^{1+\ga}},
\end{equation}
where the last inequality holds because $n-1\geq n/2$ for $n\geq 2$. Then, since there are at most $k_{\gl,\gep}^2$ terms in the sum,
\begin{equation}
\sum_{ f_{i-1}\in B_{j_{i-1}}, d_i\in B_{j_i} } K(d_i-f_{i-1}) \leq \frac{2^{1+\ga}}{(j_{i}-j_{i-1})^{1+\ga}} (k_{\gl,\gep})^{1-\ga} \varphi((j_{i}-j_{i-1}-1)k_{\gl,\gep}).
\end{equation}
We need to deal with cases $\ga=1$ and $\ga>1$ in slightly diffferent ways.

{\bf (a)} If $\ga=1$, since $\mu=\bE[\tau_1]<+\infty$, necessarily $\lim_{n\to\infty} \varphi(n) = 0$, see \cite[Proposition 1.5.9b]{cf:Bingham}.
Moreover, $(j_i - j_{i-1}-1) k_{\gl,\gep} \geq k_{\gl,\gep} = t_\gep \gl^{-2}$. Therefore, there exists $\gl_9 = \gl_9(\gep)$ such that for $\gl\in(0,\gl_9)$,
\begin{equation}
\varphi((j_i - j_{i-1}-1) k_{\gl,\gep}) \leq 2^{-(1+\ga)} D_\gep^2,
\end{equation}
and one finds
\begin{equation}
\label{eq:sumalpha=1}
\sum_{ f_{i-1}\in B_{j_{i-1}}, d_i\in B_{j_i} } K(d_i-f_{i-1}) \leq D_\gep^2 (j_i - j_{i-1})^{-2}\leq\frac{ (D_{\gep})^2}{(j_{i}-j_{i-1})^{1+\ga - \gep^2/2}}.
\end{equation}

{\bf (b)} If $\ga>1$, we use that $\varphi(n)\leq C_6 n^{\gep^2/2} $, for some constant $C_6=C_6(\gep)$, and get
\begin{equation}
\label{eq:sumalpha>1}
\sum_{ f_{i-1}\in B_{j_{i-1}}, d_i\in B_{j_i} } K(d_i-f_{i-1}) \leq 2^{1+\ga}C_5 C_6 (k_{\gl,\gep })^{1-\ga +\gep^2/2}\frac{1}{(j_{i}-j_{i-1})^{1+\ga - \gep^2/2}}.
\end{equation}
Then, one can choose $\gep$ small enough so that $1-\ga +\gep^2/2<0$, and pick $\gl_{10}=\gl_{10}(\gep)$ such that for $\gl\leq \gl_{10}$, $2^{1+\ga}C_5 C_6 (k_{\gl,\gep })^{1-\ga +\gep^2/2}\leq (D_{\gep})^2$.

In the end, for $\gl\leq \min(\gl_8,\gl_9,\gl_{10})$, one has
\begin{equation}
\label{eq:gaplarge}
\sum_{ f_{i-1}\in B_{j_{i-1}}, d_i\in B_{j_i} } K(d_i-f_{i-1}) \leq \frac{ (D_{\gep})^2}{(j_{i}-j_{i-1})^{1+\ga - \gep^2/2}}.
\end{equation}

By inserting \eqref{eq:gapsmall} and \eqref{eq:sumalpha=1}-\eqref{eq:gaplarge} in \eqref{eq:Zcheck4}, one obtains, for $\gl\leq \gl_0(\gep):=\min(\gl_7,\gl_8,\gl_9,\gl_{10})$
\begin{multline}
\label{eq:finalZcheck}
\tilde\bbE_J \big[  \check Z_J \big] \leq (2C_2 C_4)^{\ell} (D_{\gep})^{|\oJ|} (D_{\gep})^{2 |\{i\in\{1,\ldots,\ell\}\, ;\, j_i-j_{i-1} \geq 2\}|}
\prod_{i=1}^\ell  \frac{1}{(j_{i}-j_{i-1})^{1+\ga - \gep^2/2}}\\
 \leq (2C_2 C_4  D_{\gep} )^{\ell}\prod_{i=1}^\ell  \frac{1}{(j_{i}-j_{i-1})^{1+\ga - \gep^2/2}},
\end{multline}
where for the second inequality, we noticed that $D_\gep < 1$ and
\begin{equation}
\ell - |\oJ| = |J\setminus \oJ |\leq 2 |\{i\in\{1,\ldots,\ell\}\, ;\, j_i-j_{i-1} \geq 2\}|.
\end{equation}

\bigskip
{\bf STEP 4: Conclusion of the proof.}
We can now combine \eqref{eq:Holder} with \eqref{eq:part1-chgtmeas} and \eqref{eq:finalZcheck} to estimate $\bbE\big[ (\check Z_J )^{\gz_{\gep}}\big]$, and plug that estimate in \eqref{eq:fracmom} and \eqref{eq:Zhat3}. One ends up, for $\gl\leq \gl_0(\gep)$, with
\begin{equation}
\label{eq:final}
\bbE \big[ \big( Z_{N,\gl, c_{\gep} \gl}^{\omega} \big)^{\zeta_\gep} \big] \leq \sum_{J \subset \{1,\ldots,m\} \, ; \, m\in J} \bigg( \prod_{i=1}^{|J|} \frac{G_{\gep}}{(j_{i}-j_{i-1})^{(1+\ga - \gep^2/2)\zeta_{\gep}} }\bigg),
\end{equation}
where,  recalling the definitions of $C_{\gep}$ and $D_{\gep}$ in \eqref{eq:part1-chgtmeas} and \eqref{defU}, one has
\begin{multline}
G_{\gep}:= 16 (2C_2 C_4  D_{\gep} )^{\zeta_{\gep}} C_{\gep}= 16(2C_2 C_4)^{\zeta_{\gep}} \exp\Big[t_{\gep} \gz_{\gep} \Big\{ (1+\gep^2) \frac{\ga \gU_{\infty}}{2(1+\ga)}  - \Big(\frac{\ga \gU_{\infty}}{2(1+\ga)}  + \gep/2\Big) \Big\} \Big]  \\\leq 32\, C_2 C_4 \exp\Big\{- t_{\gep} \gz_{\gep}\, \frac{ \gep}{2} (1+o(1))\Big\}.
\end{multline}
Here, $o(1)$ is a quantity which goes to zero as $\gep$ goes to zero (containing all the $\gep^2$ terms). If $\gep$ is chosen small enough, then $G_{\gep} \leq 32 C_2 C_4 e^{-t_{\gep} \gep/4(1+\ga)}$.
Therefore, we may choose $t_{\gep}$ large enough to make $G_{\gep}$ arbitrarily small, in particular such that
\begin{equation}
\label{eq:Geps}
\sum_{n=1}^{+\infty} \frac{G_{\gep}}{n^{1+\ga - \gep^2/2}} <1\, .
\end{equation}
Define $\tilde K(n):= G_{\gep}\, n^{-(1+\ga - \gep^2/2)}$ for $n\in\bbN$, which we interpret as the inter-arrival distribution of a transient renewal process, since $\sum_{n\in\bbN} \tilde K (n) <1$. Then, one recognizes in the right hand side of \eqref{eq:final} the probability for this renewal to visit $m$, hence smaller than $1$.

To summarize, we proved that if $\gep$ has been fixed small enough, and $t_{\gep}$ large enough, then, for all $\gl\leq \gl_0(\gep)$, $\liminf_{N\to\infty}\bbE \big[ \big( Z_{N,\gl, c_{\gep} \gl}^{\omega,\cop} \big)^{\zeta_\gep} \big]  \leq 1$. This establishes \eqref{eq:condfracmoment2bis} and concludes the proof.

\subsection{Proof of Theorem \ref{thm:copolrel}}
Here, we assume that correlations $(\rho_{n})_{n\geq1}$ are non-negative, and that $\ga>0$. The annealed critical point is then $h_\a(\gl)=\gU_{\infty}\gl$. The proof also follows a coarse-graining scheme, as that of Toninelli \cite{cf:T1}. We now sketch how to adapt the proof of \cite{cf:T1} to the correlated case, using ideas presented in Section \ref{sec:coarse}.

Let us fix $\zeta\in(\frac{1}{1+\ga}, 1)$, and set $h=h_{\gl}:=\theta \gU_{\infty} \gl$ for some $\theta=\theta(\ga)\in (\zeta,1)$ chosen later in the proof and depending only on $\ga$. We show that, if $\gl$ is small enough, then
\begin{equation}
\liminf_{N\to\infty} \bbE\left[ \big( Z_{N,\gl,h_{\gl}}^{\go,\cop}\big)^{\zeta}\right] <+\infty\, ,
\end{equation}
which enforces Theorem \ref{thm:copolrel}, by the fractional moment argument explained at the beginning of Section \ref{sec:uppercopol}.

We then set up a coarse-graining procedure, with a block length defined as
\begin{equation}
\label{eq:defkFabio}
k = k_{\gl}:= \left\lfloor \frac{1}{\gU_\infty^2\gl^2 (1-\theta)} \right\rfloor,
\end{equation}
and consider a system of length $N:=mk_{\gl}$, in the same way as in Section \ref{sec:coarse}. Step 1 is identical and similarly leads to \eqref{eq:Zhat3}. Note that we use that $\theta >\zeta>1/(1+\ga)$, with the same computations as in \eqref{eq:useMonthusbound}, which is analogous to Equation (3.21) in \cite{cf:T1}. In Step 2, we apply a change of measure with tilting parameter $\gd=1/\sqrt{k_{\gl}}$. The cost of this change of measure, according to \eqref{eq:computecghtmeas} and \eqref{eq:approxgUJJ}, gives a constant $C_7^{|J|}$ ($C_7$ substitutes $C_{\gep}$ in \eqref{eq:part1-chgtmeas}), where $C_7:=\exp \big( \frac12 \frac{\zeta}{1-\zeta} \gU_{\infty}^{-1}\big)$.

We are therefore left with estimating the remaining terms (see \eqref{eq:step2.2}), for which we can apply the decoupling inequality of Lemma \ref{lem:decouple}. Then, using that $h_{\gl}=\theta \gU_{\infty} \gl$, and $\sum_{i,j=1}^n \rho_{ij} \gD_i \gD_j \leq n\gU_{\infty}\leq n \gU_{\infty}$, then for any $n\geq 0$, one has
\begin{equation}
\tilde\bbE_J [z_{0}^{n}] = \frac12 + \frac12 \exp\{-2\gl n (\gl \theta\gU_{\infty} +\gd - \gl \gU_{\infty}) \}\leq 1.
\end{equation}
One therefore ends up with
\begin{multline}
\label{eq:fracmomentFabio}
\bbE\left[ \big( Z_{N,\gl,h_{\gl}}^{\go,\cop}\big)^{\zeta}\right]  \leq \sum_{J\subset \{1,\ldots,m\}} (8\, C_7)^{|J|} \\
\Bigg[  \sumtwo{d_1, f_1 \in B_{j_1}}{d_1 \le f_1}
	\ldots \!\!\!\! \sumtwo{d_{\ell-1}, f_{\ell-1} \in B_{j_{\ell-1}}}{d_{\ell - 1} \le f_{\ell - 1}}
	\sum_{d_\ell \in B_{j_\ell} = B_m}
\prod_{i=1}^\ell 
K(d_i-f_{i-1}) U(f_i-d_i)\Bigg]^{\zeta},
\end{multline}
where $U(\cdot)$ is now defined by $U(n) := \tilde\bbE_J[Z_{n,\gl,h_{\gl}}^{\go,\cop}]=\bbE[Z_{n,\gl,h_{\gl}+1/\sqrt{k}}^{\go,\cop}].$
Note that \eqref{eq:fracmomentFabio} is the analogous to Equations (3.27-3.28) in \cite{cf:T1}, and we are left with showing that $U(n)$ satisfies conditions (3.30)-(3.31) in \cite{cf:T1}.

Using again the non-negativity of correlations and the fact that $h_{\gl}=\theta\gU_{\infty}\gl$, we get
\begin{multline}
U(n) \leq \bE \Big[ e^{(2\gl^2 \gU_\infty (1-\theta) - \frac{2\gl}{\sqrt{k_{\gl}}}) \sum_{i=1}^n \gD_i} \ \gd_n \Big] \\
= \bE \Big[ e^{-2\gl^2 \gU_{\infty} ( 1-\theta - \sqrt{1-\theta}) \sum_{i=1}^n \gD_i}\  \gd_n \Big]  
\leq \bE \Big[e^{-\frac{1}{k_{\gl}} \frac{1}{\gU_{\infty}\sqrt{1-\theta}} \sum_{i=1}^n \gD_i}\ \gd_n \Big],
\end{multline}
where for the second equality, we used the definition \eqref{eq:defkFabio} of $k_{\gl}$, and for the last one, we chose $\theta$ close enough to $1$ so that $2\sqrt{1-\theta} \leq 1$.
This gives the exact same estimate for $U(\cdot)$ as in Equation (3.48) of \cite{cf:T1}, with $\gU_{\infty}\sqrt{1-\theta}$ instead of $\sqrt{1-\theta}$. Then, the proof proceeds identically as in \cite{cf:T1}, since one can make $\frac{1}{\gU_{\infty}\sqrt{1-\theta}}$ arbitrarily large by choosing $\theta$ close to $1$, which was the key to  proving Proposition 3.3 and conditions (3.30)-(3.31) in~\cite{cf:T1}.


\section{Adaptation to the pinning model}
\label{sec:adaptpin}

In this section we sketch how to adapt the techniques of Sections \ref{sec:lowcopol} and \ref{sec:uppercopol} to the pinning model, and prove Theorem \ref{thm:gappin}.

\subsection{Upper bound in the pinning model}
We first focus on proving
\begin{equation}
\label{eq:lower.gappin}
\limsup_{\gb\downarrow 0} \frac{h_{c}^{\pin}(\gb)- h_{\a}^{\pin}(\gb)}{\gb^2}  \leq \frac{\gU_{\infty}}{2\mu} \frac{\ga}{1+\ga} .
\end{equation}
As in Section \ref{sec:lowcopol}, the result comes as a combination of the smoothing inequality in Proposition~\ref{pr:smoothpin} and a lower bound on the free energy, which in this case is given by:
\begin{lemma}\label{lem:lwpin}
For any $c\in\R$,
\begin{equation}
\liminf_{\beta\downarrow 0} \frac{1}{\gb^2}\tf^{\pin}(\beta, c\beta^2) \geq \frac{1}{\mu}\bigg[c+ \frac{1}{2}\bigg(\cpin - \frac{\Upsilon_{\infty}}{\mu} \bigg) \bigg],
\end{equation}
where $\cpin$ has been defined in \eqref{eq:cpin}.
\end{lemma}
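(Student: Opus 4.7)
The plan is to apply a Gaussian smart-path interpolation that connects the quenched and annealed free energies, and then to combine it with the known small-$\gb$ asymptotics of the annealed model (Proposition~\ref{prop:pinann}). For $h=c\gb^2$ and $t\in[0,1]$, I introduce
\[
Z_t \,:=\, \E\!\left[\exp\!\Big(\sqrt{t}\,\gb\sum_{n=1}^N \go_n\gd_n \,+\, h\,\cN_N \,+\, \tfrac{(1-t)\gb^2}{2}\sum_{n,m=1}^N \rho_{nm}\gd_n\gd_m\Big)\gd_N\right].
\]
The quadratic compensator $(1-t)\gb^2 V_\tau/2$, with $V_\tau := \sum_{n,m}\rho_{nm}\gd_n\gd_m$, is tuned so that $Z_0 = \bbE\bigl[Z_{N,\gb,h}^{\go,\pin}\bigr]$ (the annealed partition function) while $Z_1 = Z_{N,\gb,h}^{\go,\pin}$.

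Applying Gaussian integration by parts $\bbE[\go_n F(\go)] = \sum_k \rho_{nk}\bbE[\partial_{\go_k}F(\go)]$ to the derivative
$\frac{d}{dt}\bbE\log Z_t = \bbE\big\langle \frac{\gb}{2\sqrt{t}}\sum_n \go_n\gd_n - \frac{\gb^2}{2}V_\tau\big\rangle_t$, the $\bbE\langle\gd_n\gd_m\rangle_t$ contributions cancel and one finds
\[
\frac{d}{dt}\bbE\log Z_t \,=\, -\frac{\gb^2}{2}\sum_{n,m=1}^N\rho_{nm}\,\bbE\bigl[\langle\gd_n\rangle_t\langle\gd_m\rangle_t\bigr].
\]
Integration over $t\in[0,1]$ yields the exact representation
\[
\bbE\log Z_{N,\gb,h}^{\go,\pin} \,=\, \log\bbE\bigl[Z_{N,\gb,h}^{\go,\pin}\bigr] \,-\, \frac{\gb^2}{2}\int_0^1\sum_{n,m=1}^N\rho_{nm}\bbE\bigl[\langle\gd_n\rangle_t\langle\gd_m\rangle_t\bigr]\,dt.
\]

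I divide by $N$ and pass to the limits $N\to\infty$, then $\gb\downarrow 0$, restricting to $c > -\cpin/2$ since otherwise the target is non-positive and the claim is trivial ($\tf^{\pin}\ge 0$). The annealed contribution satisfies $\gb^{-2}\tf_\a^{\pin}(\gb,c\gb^2) \to c/\mu + \cpin/(2\mu)$: this follows either from Proposition~\ref{prop:pinann} (identifying the prefactor in \eqref{eq:pin.sumcorr.order} as $1/\mu$ to leading order), or directly from the laws of large numbers $V_\tau/N \to \cpin/\mu$ and $\cN_N/N\to 1/\mu$ applied to $\E[e^{h\cN_N + \gb^2 V_\tau/2}\gd_N]$. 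For the integral correction, as $\gb\downarrow 0$ the Gibbs measure $\langle\cdot\rangle_t$ becomes a perturbation of the annealed Gibbs measure, which in turn approaches the renewal bridge $\bP(\cdot\,|\,N\in\tau)$, uniformly in $t\in[0,1]$. By the classical renewal theorem, $\bP(n\in\tau\,|\,N\in\tau)\to 1/\mu$ for $n$ away from the boundaries, so Cesàro summation combined with $\sum_{n,m=1}^N\rho_{n-m} \sim N\,\gU_\infty$ gives
\[
\lim_{\gb\downarrow 0}\limsup_{N\to\infty}\frac{1}{N}\int_0^1\sum_{n,m=1}^N\rho_{nm}\bbE\bigl[\langle\gd_n\rangle_t\langle\gd_m\rangle_t\bigr]\,dt \,=\, \frac{\gU_\infty}{\mu^2}.
\]
Combining both contributions yields $\liminf_{\gb\downarrow 0}\gb^{-2}\tf^{\pin}(\gb,c\gb^2) \ge c/\mu + \cpin/(2\mu) - \gU_\infty/(2\mu^2) = \tfrac{1}{\mu}\bigl[c+\tfrac{1}{2}(\cpin-\gU_\infty/\mu)\bigr]$, as required.

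The hard part will be the uniform-in-$t$ control of the Gibbs one-point function $\bbE\langle\gd_n\rangle_t$: the Cesàro limit above must be established uniformly in $t\in[0,1]$ while also handling the outer expectation over $\go$. The natural tools are Gaussian concentration for $\log Z_t$ (applicable since $\log Z_t$ is Lipschitz in $\go$ with a computable constant), continuity of the contact density $\partial_h \tf_\a^{\pin}$ past the annealed critical curve, and the summability $\sum_k|\rho_k|<+\infty$ which furnishes dominated-convergence control uniform in $N$. Boundary effects (where $n$ lies within $O(1)$ of $0$ or $N$) produce only $O(1)$ terms that vanish after normalization by $N$.
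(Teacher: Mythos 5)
Your interpolation identity is correct: with $Z_t$ as defined, Gaussian integration by parts indeed yields
\begin{equation*}
\frac{d}{dt}\,\bbE\log Z_t \,=\, -\frac{\gb^2}{2}\sum_{n,m=1}^N\rho_{nm}\,\bbE\big[\langle\gd_n\rangle_t\langle\gd_m\rangle_t\big],
\end{equation*}
and the overall strategy (interpolate between the annealed and quenched free energies, then evaluate the correction) is exactly in the spirit of the paper's Lemma~\ref{lem:interpol}. However, there is a genuine gap in the step you flag as ``the hard part,'' and the tools you list do not close it. To obtain the lower bound you must control the replica overlap $\bbE[\langle\gd_n\rangle_t\langle\gd_m\rangle_t]$ \emph{uniformly in $t$}, and since the $\rho_{nm}$ can be negative, a one-sided bound is not enough; you need the actual convergence to $1/\mu^2$. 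Gaussian concentration does not provide this: the gradient of $\langle\gd_n\rangle_t$ with respect to $\go$ is $\sqrt{t}\,\gb\big(\langle\gd_n\gd_k\rangle_t-\langle\gd_n\rangle_t\langle\gd_k\rangle_t\big)$, so the $\ell^2$-Lipschitz constant is of order $\sqrt{t}\,\gb\sqrt{N}=O(1)$ at the relevant scale $N\sim 1/\gb^2$. Concentration therefore gives $O(1)$ fluctuations of $\langle\gd_n\rangle_t$, not vanishing ones, and the factorization $\bbE[\langle\gd_n\rangle_t\langle\gd_m\rangle_t]\approx\bbE\langle\gd_n\rangle_t\,\bbE\langle\gd_m\rangle_t$ is unjustified without additional input on the decay of quenched Gibbs correlations, which is a hard problem in its own right.

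The paper's proof avoids this entirely by using a \emph{two-replica} interpolation à la Guerra--Toninelli (see Lemma~\ref{lem:interpol} and Appendix~\ref{App:interpolation}): an extra coupling term $\kappa\gb^2\sum\rho_{nm}\gd_n\gd'_m$ is added, the function $\kappa\mapsto\psi(N,h,\gb;t,\kappa)$ is convex and nondecreasing, and the identity $-\partial_t\psi(\cdot;t,0)=\partial_\kappa\psi(\cdot;t,0)$ together with convexity and a Gronwall argument bounds the entire overlap integral by the \emph{annealed} replica quantities $\cA_N$ and $\cB_N$ evaluated at $t=0$, where everything is deterministic in $\go$ and the renewal laws of large numbers (Lemma~\ref{lem:convergence.pin}) apply directly. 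Your single-copy interpolation without the $\kappa$-trick essentially reproduces the ``raw'' overlap that the replica-coupling is precisely designed to tame; without that mechanism the argument is not complete.
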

Combining Lemma \ref{lem:lwpin} and Proposition~\ref{pr:smoothpin}, as done in \eqref{eq:combine1}-\eqref{eq:combine2} for the copolymer model, gives the right bound, \eqref{eq:lower.gappin}.

\begin{proof}[Proof of Lemma \ref{lem:lwpin}.]
Define
for $N\in \bbN$, $h\in\bbR$ and $\gb\geq 0$ the finite-volume free energies:
\begin{equation}
\tf^{\pin}_N(\gb,h) = \bbE\big[ \frac{1}{N} \log Z^{\go,\pin}_{N,\gb,h}\big], \qquad \tf^{\pin}_{\a,N}(\gb,h) = \frac{1}{N} \log Z^{\pin, \a}_{N,\gb,h},
\end{equation}
where 
\begin{equation}
\label{eq:pin.ann.par.fun}
Z^{\pin, \a}_{N,\gb,h} := \bbE [Z^{\go,\pin}_{N,\gb,h}] = \E \bigg[\exp\bigg\{ h \sum_{n=1}^N \delta_n + \frac{\gb^2}{2} \sum_{n,m=1}^N \rho_{nm} \delta_n \delta_m  \bigg\} \delta_N \bigg].
\end{equation}
The proof relies on the following lemma, which is proven in Appendix \ref{App:interpolation}. 
\begin{lemma}
\label{lem:interpol}
Let $\bP^{\otimes 2}$ be the law of two independent copies of the renewal process, denoted by $\tau$ and $\tau'$. For any $N, M\in\bbN$,
\begin{equation}
\tf^{\pin}(\gb,h)\geq \tf_N^{\pin}(\gb,h)
 \geq \tf^{\pin}_{\a,N}(\gb,h) - \frac{e^{1/M}}{M}  \frac{1}{2N} \log \cA_N + \frac{e^{1/M}}{M} \frac{1}{2N} \log \cB_N \label{eq:lb}
\end{equation}
with
\begin{multline}
\cA_N := \E^{\otimes 2}\bigg[ \exp\bigg\{ (M+1)\gb^2 \sum_{n,m=1}^N \rho_{nm} \delta_n \delta'_m + h \sum_{n=1}^N (\delta_n + \delta'_n) \\
+ \frac{\gb^2}{2} \sum_{n,m=1}^N \rho_{nm} (\delta_n \delta_m + \delta'_n \delta'_m)\bigg\}  \bigg],
\end{multline}
\begin{equation}
\cB_N := \E^{\otimes 2}\bigg[ \exp\bigg\{h \sum_{n=1}^N (\delta_n + \delta'_n) + \frac{\gb^2}{2} \sum_{n,m=1}^N \rho_{nm} (\delta_n \delta_m + \delta'_n \delta'_m)\bigg\}\delta_N \delta'_N \bigg].
\end{equation}
\end{lemma}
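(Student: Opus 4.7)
The proof splits into two inequalities. The first inequality $\tf^{\pin}(\gb,h) \geq \tf_N^{\pin}(\gb,h)$ is a standard superadditivity argument: setting $a_N := \bbE \log Z^{\go,\pin}_{N,\gb,h}$, restricting the trajectories contributing to $Z^{\go,\pin}_{N+M}$ to those visiting site $N$ yields $Z^{\go,\pin}_{N+M} \geq Z^{\go,\pin}_{N} \cdot Z^{\theta^N\go,\pin}_M$; taking $\bbE\log$ and using stationarity of $\go$ gives $a_{N+M} \geq a_N + a_M$, and Fekete's lemma delivers $\tf^\pin = \lim a_N/N = \sup_N a_N/N \geq a_N/N$.

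For the main interpolation inequality, the plan is a Gaussian interpolation between quenched and annealed, combined with an exponential Markov bound. Introduce, for $t \in [0,1]$,
\begin{equation*}
Z_N^{(t)} = \E\bigg[\exp\bigg(\sqrt{t}\gb\sum_{n=1}^N \go_n\delta_n + \tfrac{(1-t)\gb^2}{2}\sum_{n,m=1}^N \rho_{nm}\delta_n\delta_m + h\sum_{n=1}^N\delta_n\bigg)\delta_N\bigg],
\end{equation*}
so that $Z_N^{(1)} = Z_N^{\go,\pin}$, $Z_N^{(0)} = Z_N^{\pin,\a}$, and $\bbE Z_N^{(t)} = Z_N^{\pin,\a}$ for every $t$ (by the Gaussian-martingale property, since the deterministic term exactly compensates the log-MGF of the stochastic one). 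A direct Gaussian integration-by-parts computation, using $\bbE[\go_n F(\go)] = \sum_m \rho_{nm}\bbE[\partial_{\go_m}F]$, shows that $\phi(t) := \bbE\log Z_N^{(t)}$ satisfies
\begin{equation*}
\phi'(t) = -\tfrac{\gb^2}{2}\sum_{n,m=1}^N \rho_{nm}\,\bbE\big[\langle \delta_n\delta'_m\rangle_t^{\otimes 2}\big],
\end{equation*}
where $\langle\cdot\rangle_t^{\otimes 2}$ is the two-replica product Gibbs measure associated with $Z_N^{(t)}$. Integrating in $t$ yields the exact identity
\begin{equation*}
\log Z_N^{\pin,\a} - \bbE\log Z_N^{\go,\pin} = \tfrac{\gb^2}{2}\int_0^1 \!dt\,\bbE\bigg[\sum_{n,m=1}^N \rho_{nm}\langle \delta_n\delta'_m\rangle_t^{\otimes 2}\bigg].
\end{equation*}

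To convert this identity into the claimed bound $\frac{e^{1/M}}{2M}\log(\cA_N/\cB_N)$, I would apply the exponential Markov inequality $\bbE Y \leq (1/\gl)\log\bbE e^{\gl Y}$ to $Y = \frac{\gb^2}{2}\sum\rho_{nm}\langle\delta_n\delta'_m\rangle_t^{\otimes 2}$ with $\gl = M+1$, after using Jensen's inequality to pass $e^{\gl Y}$ through the Gibbs average. Averaging over $t\in[0,1]$ by a further Jensen step, the resulting disorder-averaged exponential moment evaluates, via Gaussian integration, precisely to $\cA_N/\cB_N$: the denominator $\cB_N = (Z_N^{\pin,\a})^2$ arises as the two-replica annealed normalization (with endpoint constraint $\delta_N\delta'_N$), while the tilt $\exp((M+1)\gb^2\sum\rho_{nm}\delta_n\delta'_m)$ produces $\cA_N$ in the numerator. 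The handling of the random denominator $(Z_N^{(t)})^2$ for $t>0$ and the asymmetric treatment of the endpoint constraint (absent in $\cA_N$, present in $\cB_N$) account for the replacement of the naive $1/(M+1)$ by the corrected prefactor $e^{1/M}/(2M)$, via the renewal estimate $\bP(n\in\tau)=(1+o(1))/\mu$ and the uniform bounds on $\gU_k^{-1}$ provided by Assumption \ref{hyp:correlations}.

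The main obstacle is precisely this last conversion: carefully matching the coefficient $(M+1)$ on the cross term in $\cA_N$, the prefactor $e^{1/M}/M$, and the asymmetric endpoint constraints between $\cA_N$ and $\cB_N$. Apart from this delicate bookkeeping, the scheme parallels the IID argument in \cite{cf:BCPSZ}, with the correlated Gaussian structure entering only through the covariance $\rho_{nm}$ in the integration-by-parts formula and through the uniformity of $\gU_k^{-1}$ guaranteed by Assumption \ref{hyp:correlations}.
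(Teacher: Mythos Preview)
Your first inequality (superadditivity) and the single-parameter interpolation identity
\[
\phi'(t) = -\tfrac{\gb^2}{2}\sum_{n,m=1}^N \rho_{nm}\,\bbE\big[\langle \delta_n\delta'_m\rangle_t^{\otimes 2}\big]
\]
are both correct, and the paper starts from essentially the same place. The genuine gap is exactly where you flag it: converting the overlap integral into the bound $\frac{e^{1/M}}{M}\cdot\frac{1}{2N}\log(\cA_N/\cB_N)$.

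Your proposed route --- exponential Markov on $Y_t$, then Jensen to push the exponential through the Gibbs bracket --- does not close. After those two steps you are left with
\[
\bbE\bigg[\frac{\E^{\otimes 2}\big[e^{(M+1)\frac{\gb^2}{2}\sum\rho_{nm}\delta_n\delta'_m}\,e^{\sqrt{t}\gb\sum\go_n(\delta_n+\delta'_n)+\cdots}\,\delta_N\delta'_N\big]}{(Z_N^{(t)})^2}\bigg],
\]
and the random denominator $(Z_N^{(t)})^2$ still depends on $\go$ through the full interpolated Hamiltonian at $t>0$. Neither the renewal estimate $\bP(n\in\tau)\to 1/\mu$ nor the spectral bounds on $\gU_k^{-1}$ gives pointwise lower control of $Z_N^{(t)}$, so they cannot remove this denominator. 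The factor $e^{1/M}/M$ and the endpoint asymmetry do not come from those sources either.

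The paper's device is a \emph{two-parameter} interpolation (Toninelli's replica-coupling method \cite{cf:T2}): one adds a deterministic replica coupling $\kappa\gb^2\sum\rho_{nm}\delta_n\delta'_m$ and sets
\[
\psi(t,\kappa) := \frac{1}{2N}\,\bbE\log \E^{\otimes 2}_{N,\gb,h}\big[e^{H^{\otimes 2}_N(\gb,t,\kappa,\go)}\big],
\]
where $\E_{N,\gb,h}$ is the \emph{annealed} Gibbs measure, so the normalization is deterministic. Gaussian integration by parts gives $\partial_t\psi \le \partial_\kappa\psi$ for all $(t,\kappa)$, with equality at $\kappa=0$. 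Convexity of $\psi$ in $\kappa$ together with $\psi(t,\kappa)\le\psi(0,t+\kappa)$ then yields
\[
-\partial_t\psi(t,0)=\partial_\kappa\psi(t,0)\le \frac{\psi(0,M+1)-\psi(t,0)}{M}.
\]
The point is that $\psi(0,M+1)$ is evaluated at $t=0$, where the disorder is absent, so it is the deterministic quantity $\frac{1}{2N}\log(\cA_N/\cB_N)$ (up to dropping the endpoint constraint in $\cA_N$, which only increases it). Writing $f(t)=-\psi(t,0)$ and $c=\psi(0,M+1)$, the inequality reads $f'(t)\le (c+f(t))/M$ with $f(0)=0$; integrating gives $f(1)\le \frac{e^{1/M}}{M}c$. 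This Gronwall step is the true origin of $e^{1/M}/M$.

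So the missing idea is the second interpolation parameter $\kappa$: it is precisely what trades the disorder-dependent Gibbs measure at $t>0$ for the deterministic one at $t=0$, via convexity in $\kappa$ rather than via any pointwise bound on $Z_N^{(t)}$.
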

The first inequality in \eqref{eq:lb} comes from the super-additivity of $\bbE[\log Z_{N,\gb,h}^{\go}]$, which gives that $\tf^{\pin}(\gb,h) = \sup_{n\in\bbN} \tf^{\pin}_{N}(\gb,h)$. The other inequality in \eqref{eq:lb} is dealt with via interpolation techniques, developed in Appendix \ref{App:interpolation}.
Using Lemma \ref{lem:interpol}, the proof of Lemma \ref{lem:lwpin} consists in giving a second order estimate of \eqref{eq:lb}, for $\gb\searrow 0$, and for appropriate values of $h$ and $N$. Namely, set $h = c\gb^2$ and $N = N_{\gb} := t/\gb^2$, where $t>0$, and apply Lemma \ref{lem:interpol} to get that
\begin{multline}
\label{eq:postinterpol1}
\liminf_{\gb\searrow 0}\frac{1}{\gb ^2} \tf^{\pin}(\gb,c\gb^2) \\
\geq \frac{1}{t} \liminf_{\gb\downarrow0} \log Z_{N_{\gb},\gb,c\gb^2}^{\pin,\a}- \frac{e^{1/M}}{M}  \frac{1}{2t} \limsup_{\gb\downarrow0}\log \cA_{N_{\gb}} + \frac{e^{1/M}}{M} \frac{1}{2t} \liminf_{\gb\downarrow0} \log \cB_{N_{\gb}}.
\end{multline}
Using the convergence results in Lemma \ref{lem:convergence.pin}, and the definition of $N_{\gb}=t/\gb^2$ and $h=c\gb^2$, we get
\begin{equation}
\lim_{\gb\downarrow 0} \, \log \cA_{N_{\gb}} = \frac{t}{\mu} \bigg\{ (M+1)\frac{\gU_{\infty}}{\mu} + 2c + \cpin \bigg\}.
\end{equation}
Also, bounding the $\gd_n$'s by $1$, we get
\begin{multline}
\liminf_{\gb\downarrow 0} \log \cB_{N_{\gb}} \geq 
\liminf_{\gb\downarrow 0}  \bigg\{ \log \P(N_{\gb}\in\tau)^2 -2|c| \gb^2 N_{\gb} - \gb^2 \sum_{n,m=1}^{N_{\gb}} |\rho_{nm}| \bigg\}\\
=- 2\log \mu -2|c| t-t \sum_{n\in\bbZ} |\rho_n|.
\end{multline}
Then, letting $M$ go to infinity in \eqref{eq:postinterpol1}, we obtain that for all $t>0$,
\begin{equation}
\label{eq:Fpin_LB0}
\liminf_{\gb\downarrow 0}\frac{1}{\gb ^2} \tf^{\pin}(\gb,c\gb^2) \geq 
\frac{1}{t}\liminf_{\gb\downarrow0} \log Z_{N_{\gb},\gb,c\gb^2}^{\pin,\a}- \frac{\gU_{\infty}}{2\mu^2}.
\end{equation}
Using Jensen's inequality, we may write
\begin{multline}
\label{eq:Fpinann_LB0}
\log Z_{N_{\gb},\gb,c\gb^2}^{\pin,\a} \geq\\
 \log \P(N_{\gb}\in\tau) + \frac{c\, t}{N_{\gb}}\sum_{ n=1}^{N_{\gb}} \E[\gd_n\mid N_{\gb} \in \tau] + \frac{t}{2 N_{\gb}}\sum_{n,m=1}^{N_{\gb}} \rho_{nm} \E[\gd_n\gd_m \mid N_{\gb}\in\tau],
\end{multline}
and then use the following limits
\begin{equation}
 \lim_{N\to\infty}\frac{1}{N} \sum_{n=1}^N\E[\gd_n\mid N\in \tau] = \frac{1}{\mu},
\quad  \lim_{N\to\infty} \frac{1}{N}\sum_{ n,m=1}^{ N} \rho_{nm} \E[\gd_n\gd_m \mid N \in\tau] = \frac{\cpin}{\mu}.
\end{equation}
The first limit comes from writing $\E[\gd_n\mid N\in \tau]= \bP(n\in\tau) \bP(N-n\in\tau )/ \bP(N\in\tau)$, together with dominated convergence and Cesar\`o summation. A similar reasoning holds for the second limit.
Hence, \eqref{eq:Fpinann_LB0} gives that $\liminf_{\gb\downarrow0} \log Z_{N_{\gb},\gb,c\gb^2}^{\pin,\a} \geq -\log \mu +\frac{t}{\mu} \big(c+\cpin /2 \big)$,
and Lemma \ref{lem:lwpin} follows from \eqref{eq:Fpin_LB0} by letting $t\to\infty$.
\end{proof}

\subsection{Lower bound in the pinning model}
We now turn to the proof of
\begin{equation}
\label{eq:upb.pin}
\liminf_{\gb\downarrow 0} \frac{h_{c}^{\pin}(\gb)- h_{\a}^{\pin}(\gb)}{\gb^2}  \geq \frac{\gU_{\infty}}{2\mu} \frac{\ga}{1+\ga}.
\end{equation}
We use the same techniques as for the copolymer model: we control $\liminf_{N\to\infty} \bbE[(Z^{\go,\pin}_{N,\gb,h})^{\zeta}] $ by gluing finite-size estimates (found as in Section \ref{sec:chgmeas}), with the help of a coarse-graining procedure (as in Section \ref{sec:coarse}). We only focus here on the modifications that are necessary to adapt this scheme to the pinning model.

\subsubsection{Finite-size annealed estimate}
Let us estimate the fractional moment on one block whose length is of order $1/\gb^2$, analogously to Section \ref{sec:chgmeas}.  Recall the expression of the annealed partition function as given in \eqref{eq:pin.ann.par.fun}.
\begin{lemma}[Analogous to Lemma \ref{lem:finitefraccop}]
\label{lem:finitefractpin}
Let $u\in\bbR$, $t>0$ and set 
\begin{equation}
h_{\gb}= \Big(-\frac{\cpin}{2}+u\Big)\gb^2,\qquad k_{\gb}=t/\gb^2.
\end{equation}
Then, for $\zeta\in(0,1)$,
\begin{equation}
\limsup_{\gb\searrow0} \bbE\left[ \left( Z_{k_{\gb},\gb,h_{\gb}}^{\go, \pin} \right)^{\zeta} \right] \leq
	\exp\Big\{\frac{\zeta}{\mu} \bigg( u-   \gU_{\infty}\frac{1-\zeta}{2\mu} \bigg) t \Big\}.
\end{equation}
\end{lemma}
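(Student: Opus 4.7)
The plan is to mirror the proof of Lemma~\ref{lem:finitefraccop} line by line, keeping track of how the quadratic term in $\go$ from the copolymer gets replaced by a linear one in $\go$ in the pinning case. I will introduce the same tilted measure $\tilde\bbP_{\delta,k}$ as in \eqref{eq:def.change.meas}, with tilt parameter $\delta = a\gb$ ($a\in\R$ to be optimized later), and then H\"older's inequality reads
\begin{equation*}
\bbE\big[(Z^{\go,\pin}_{k,\gb,h})^\zeta\big] \leq \big[\tilde\bbE_{\delta,k}(Z^{\go,\pin}_{k,\gb,h})\big]^\zeta \cdot \Big[\tilde\bbE_{\delta,k}\big((\dd\bbP/\dd\tilde\bbP_{\delta,k})^{1/(1-\zeta)}\big)\Big]^{1-\zeta}.
\end{equation*}
The Radon--Nikodym cost factor depends only on $\delta$, the matrix $\gU_k$ and $\zeta$ (not on the model), so the calculation \eqref{eq:costshift}--\eqref{eq:costshift2} carries over verbatim and yields, as $\gb\downarrow 0$ with $k=k_\gb$, the limit $\exp\!\big(\tfrac{\zeta a^2 t}{2(1-\zeta)\gU_\infty}\big)$.

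For the first factor, I apply Fubini and the Gaussian moment generating function (using that under $\tilde\bbP_{\delta,k}$ the disorder is still Gaussian with the same covariance $\gU_k$, only shifted by $\delta$) to obtain
\begin{equation*}
\tilde\bbE_{\delta,k}[Z^{\go,\pin}_{k,\gb,h_\gb}]
= \E\Big[\exp\Big\{\gb^2\big(a+u-\tfrac{\cpin}{2}\big)\sum_{n=1}^{k}\gd_n
+ \tfrac{\gb^2}{2}\sum_{n,m=1}^{k}\rho_{nm}\gd_n\gd_m\Big\}\gd_k\Big].
\end{equation*}
This is precisely the annealed pinning partition function $Z^{\pin,\a}_{k,\gb,h_\gb+a\gb^2}$ (compare \eqref{eq:pin.ann.par.fun}). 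The offset $-\cpin/2$ built into $h_\gb$ is put there exactly so that the deterministic contributions cancel: passing to the limit $\gb\downarrow 0$ with $k_\gb=t/\gb^2$, the renewal convergences $\gb^2\sum_{n=1}^{k_\gb}\gd_n \to t/\mu$ and $\gb^2\sum_{n,m=1}^{k_\gb}\rho_{nm}\gd_n\gd_m \to t\cpin/\mu$ (the pinning analog of Lemma~\ref{lem:convergenceapp} already invoked in Lemma~\ref{lem:lwpin}), together with $\P(k_\gb\in\tau)\to 1/\mu$, give
\begin{equation*}
\lim_{\gb\downarrow 0}\tilde\bbE_{\delta,k_\gb}[Z^{\go,\pin}_{k_\gb,\gb,h_\gb}] = \frac{1}{\mu}\exp\!\Big(\frac{t(a+u)}{\mu}\Big).
\end{equation*}

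Combining the two estimates, and bounding $(1/\mu)^\zeta \leq 1$, I arrive at
\begin{equation*}
\limsup_{\gb\downarrow 0}\bbE\big[(Z^{\go,\pin}_{k_\gb,\gb,h_\gb})^\zeta\big]
\leq \exp\!\Big(\zeta t\Big\{\frac{a+u}{\mu} + \frac{a^2}{2(1-\zeta)\gU_\infty}\Big\}\Big).
\end{equation*}
Optimizing in $a$ gives $a^* = -(1-\zeta)\gU_\infty/\mu$, after which the exponent reduces to $(\zeta/\mu)\big(u-\gU_\infty(1-\zeta)/(2\mu)\big)t$, exactly as claimed. Note that this optimal tilt is a natural counterpart of $a=\gU_\infty(\zeta-1)$ in Lemma~\ref{lem:finitefraccop}, rescaled by $1/\mu$ because of the different scaling $h\sim\gb^2$ in the pinning parametrization.

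The only genuinely delicate point is the passage to the limit in the tilted expectation: one needs the renewal-theoretic convergence of the quadratic form $\gb^2\sum\rho_{nm}\gd_n\gd_m$ to $t\cpin/\mu$ (and not merely to $t\gU_\infty/\mu$ as would happen for deterministic $\gd_n\equiv 1$). Fortunately the integrand is uniformly bounded in $\gb$, since $|\sum_{n,m}\rho_{nm}\gd_n\gd_m|\leq \big(\sum_{n\in\Z}|\rho_n|\big)\sum_n\gd_n = O(k)$ and $\gb^2 k=t$, so dominated convergence combined with the almost-sure renewal limits suffices; this is the content of the pinning companion to Lemma~\ref{lem:convergenceapp} already used in the proof of Lemma~\ref{lem:lwpin}.
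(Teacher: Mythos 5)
Your proof is correct and follows exactly the route the paper sketches for this lemma (tilt by $\delta=a\gb$, H\"older, compute the change-of-measure cost and the tilted expectation, pass to the $\gb\downarrow 0$ limit via Lemma~\ref{lem:convergence.pin} and dominated convergence, then optimize over $a$); the paper in fact only says the proof ``follows exactly the same lines as for Lemma~\ref{lem:finitefraccop}'' and leaves the details to the reader, which you have supplied correctly. Your optimal tilt $a^*=-(1-\zeta)\gU_\infty/\mu$ matches the value $a_\gep$ the paper later uses in the pinning coarse-graining (\eqref{eq:choice.tilt.pin}), confirming the consistency of your computation.
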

The proof follows exactly the same lines as for Lemma \ref{lem:finitefraccop}, except that we use Lemma \ref{lem:convergence.pin} instead of Lemma \ref{lem:convergenceapp} (in particular in \eqref{eq:modifmeascop}). Details are left to the reader.

Recall Proposition \ref{prop:pinann}. According to Lemma \ref{lem:finitefractpin}, the fractional moment of the partition function should therefore stay bounded when $h \leq \gb^2 \big(-\frac{\cpin}{2} + \frac{\gU_{\infty}(1-\zeta)}{2\mu} \big)$. Since we need $\zeta > \frac{1}{1+\ga}$ to make the full coarse-graining procedure work (see Section \ref{sec:coarse}), we get \eqref{eq:upb.pin}.

\subsubsection{Coarse-graining procedure.} We only stress the main modifications of the steps in Section \ref{sec:coarse} that are needed to adapt the proof to the pinning model.

\medskip \noindent
{\bf STEP 0.} We set
\begin{equation}
\label{eq:choice.c.pin}
c_0 := - \frac{\cpin}{2} + \frac{\gU_\infty}{2\mu} \frac{\ga}{1+\ga},\quad c_\gep = c_0 - \gep,\quad \gep >0.
\end{equation}
We need to show that there exists $\gl = \gl_0(\gep)$ and $\zeta_\gep\in (0,1)$ such that for $\gl \in (0,\gl_0)$,
\begin{equation}
\liminf_{N\to\infty} \bbE[(Z^{\go,\pin}_{N,\gb,c_\gep\beta^2})^{\zeta_\gep}] < +\infty.
\end{equation}
Similarly to \eqref{eq:choice.par.cop}, we write $h=h_{\gep}(\gb)=c_{\gep} \gb^2$, and set $\zeta_{\gep}=\frac{1}{1+\ga} +\gep^2$, $k_{\gb,\gep}=t_{\gep}/\gb^2$, where $t_{\gep}$ is chosen large enough at the end of the proof.

\medskip \noindent
{\bf STEP 1.} The  coarse-graining decomposition in \eqref{eq:1stdecomp} is still valid, with \eqref{eq:Zdf} and \eqref{eq:z.onestretch} replaced by 
\begin{equation}
a\leq b,\quad z_a^b = z^b = e^{\gb \go_b + h},\qquad Z_{a,b} = Z^{\gt^a\go,\pin}_{b-a,\gb,k_{\gb,\gep}}.
\end{equation}
The steps (\ref{eq:smallz.dec}--\ref{eq:Zhat3}) are no longer necessary, and one is left with estimating $\bbE[(\hat{Z}_J)^{\zeta_\gep}]$.

\medskip \noindent
{\bf STEP 2.} We use the same type of change of measure procedure. Here, the law $\tilde{\p}_J$ has a tilting parameter
\begin{equation}
\label{eq:choice.tilt.pin}
\gd = a_\gep \gb, \qquad \mbox{where}\quad a_\gep = -(1-\zeta_\gep)\gU_\infty/\mu.
\end{equation}
Equation \eqref{eq:part1-chgtmeas} stills holds, with
\begin{equation}
C_\gep = \exp\Big( \zeta_\gep(1+\gep^2)\frac{t_\gep}{\mu^2}\frac{\gU_\infty}{2}\frac{\ga}{1+\ga} \Big).
\end{equation}
The quantity to estimate is now, analogously to \eqref{eq:step2.2}, 
\begin{equation}
\tilde{\bbE}_J\Big[ \prod_{i=1}^\ell Z_{d_i,f_i} z^{d_i} \Big] 
\leq 2^\ell \prod_{i=1}^\ell \tilde{\bbE}_J [  Z_{d_i,f_i} ] \tilde{\bbE}_J[z^{d_i} ]
\end{equation}
where the inequality holds for $\gb$ small enough, using decoupling inequalities (see Lemma \ref{lem:decouple.general}, and Remark \ref{rmk:dec.pin}). Since $\tilde{\bbE}_J[z^{d_i} ]\leq 2$ if $\gb$ is small enough, we focus on $\tilde{\bbE}_J [  Z_{d_i,f_i} ] $. One has
\begin{equation}
\tilde{\bbE}_J[Z_{0,n}] = \bE\Big[ \exp\Big( (c_\gep + a_\gep) \gb^2 \sum_{i=1}^n \gd_i + \frac{\gb^2}{2} \sum_{i,j=1}^n \rho_{ij} \gd_i \gd_j \Big) \delta_n \Big],\quad n\in \{1,\ldots,k_{\gb,\gep}\}.
\end{equation}

Using the convergence in Lemma \ref{lem:convergence.pin} together with the definition of $k_{\gb,\gep}=t_{\gep}/\gb^2$, one can find $\gb_1=\gb_1(\gep)$ such that for $\gb\leq \gb_1$, one gets that for any $n\in \{(1-\gep^2) k_{\gb,\gep},\ldots,k_{\gb,\gep}\}$, 
\begin{equation}
\tilde{\bbE}_J[Z_{0,n}] \leq \exp\Big\{\big(c_\gep + a_\gep\big) (1-\gep^2) \frac{t_\gep}{\mu} + \frac{t_\gep}{\mu} \frac{\cpin}{2}\Big\} \leq \exp\Big\{-\frac{t_\gep}{\mu}\Big(\frac{\gU_\infty}{2\mu}\frac{\ga}{1+\ga} + \frac{\gep}{2}\Big)\Big\},
\end{equation}
where for the second inequality, we used the definitions of $c_{\gep}$ and $a_{\gep}$, and chose $\gep$ small.

Provided that $\gb$ is small enough, one also has a uniform bound $\tilde\bbE_J[ Z_{0,n}]\leq C_1$ for $n\in\{1,\ldots,k_{\gb,\gep}\}$, where the constant $C_1$ does not depend on $\gep$, as in STEP 2.\ (2.b.ii) (the analogous to  \eqref{eq:STEP2bii.one}-\eqref{eq:STEP2bii.two} hold here thanks to Lemma \ref{lem:convergence.pin}).

In the end, we obtain the analogous to \eqref{eq:Zcheck2} and \eqref{defU}, where the constant $D_\gep$ becomes
\begin{equation}
D_\gep = \exp\Big( -\frac{t_\gep}{\mu} \Big( \frac{\gU_\infty}{2\mu}\frac{\ga}{1+\ga} + \frac{\gep}{2} \Big) \Big).
\end{equation}


\medskip \noindent
{\bf STEP 3} is identical to the copolymer case, and one ends up in {\bf STEP 4} with the analogous to \eqref{eq:final}, with 
\begin{multline}
G_{\gep}= C_8 ( D_{\gep})^{\zeta_{\gep}} C_{\gep} 
= C_8 \exp \Big\{-\frac{t_{\gep} }{\mu}\gz_{\gep} \Big( \frac{\gU_\infty}{2\mu}\frac{\ga}{1+\ga} + \frac{\gep}{2} \Big) +\zeta_\gep(1+\gep^2)\frac{t_\gep}{\mu^2}\frac{\gU_\infty}{2}\frac{\ga}{1+\ga}  \Big\} \\
 \leq C_8 \exp\Big(-\frac{t_{\gep} }{\mu}\gz_{\gep} \frac{\gep}{4}\Big),
\end{multline}
where (i) $C_8$ is a constant which does not depend on $\gep$ and (ii) we took $\gep$ small enough so that all the terms on order $\gep^2$ become negligible. Then, one can make $G_{\gep}$ arbitrarily small by choosing $t_{\gep}$ large, so that \eqref{eq:Geps} holds. This concludes the proof. \qed


\begin{appendix}

\section{Convergence results}
\label{app1}

\begin{lemma}[Convergence in the copolymer model]
\label{lem:convergenceapp}
If $\mu=\bE[\tau_1]<+\infty$, then $\bP$-a.s.,
\begin{align}
\lim_{N\to\infty}\frac{1}{N} \sum_{n=1}^N \gD_n &= \frac12,\label{conv:cascopol0}\\
\lim_{N\to\infty}\frac{1}{N}\sum_{n,m=1}^{N}  \rho_{nm} \gD_n \gD_m &= \lim_{N\to\infty}\frac{1}{N}\sum_{n,m=1}^{N}  \rho_{nm} \bE[\gD_n \gD_m] =
 \frac{1}{4} \gU_{\infty} +\frac{1}{4} \ccop, \label{conv:cascopol1}
\end{align}
where $\ccop$ is defined in \eqref{defccop}.
\end{lemma}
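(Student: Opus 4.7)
The plan is to reduce everything to the renewal-and-Bernoulli structure, combine the renewal SLLN with the SLLN for $(X_j)$, and then handle the infinite sum over lags by dominated convergence.

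For \eqref{conv:cascopol0}, I would first regroup indices according to the excursion they belong to:
\begin{equation*}
\sum_{n=1}^N \gD_n \,=\, \sum_{j=1}^{\cN_N} X_j (\tau_j - \tau_{j-1}) \,+\, X_{\cN_N+1}(N - \tau_{\cN_N}).
\end{equation*}
The boundary is bounded by $\tau_{\cN_N+1}-\tau_{\cN_N}$, which is $o(N)$ a.s., since for i.i.d.\ integrable increments one has $\max_{j\le n}(\tau_j-\tau_{j-1})=o(n)$ a.s. The SLLN applied to the i.i.d.\ sequence $X_j(\tau_j-\tau_{j-1})$ (with mean $\bE[X_1]\bE[\tau_1]=\mu/2$, by independence of $X$ and $\tau$) combined with the renewal SLLN $\cN_N/N\to 1/\mu$ a.s.\ then yields \eqref{conv:cascopol0}.

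For \eqref{conv:cascopol1}, I would use $\rho_{nm}=\rho_{|n-m|}$ together with $\gD_n^2=\gD_n$ to split
\begin{equation*}
\frac{1}{N}\sum_{n,m=1}^N \rho_{nm}\gD_n\gD_m \,=\, \rho_0 \cdot\frac{1}{N}\sum_{n=1}^N \gD_n \,+\, 2\sum_{k=1}^{N-1}\rho_k\cdot\frac{1}{N}\sum_{n=1}^{N-k}\gD_n\gD_{n+k},
\end{equation*}
so the core task becomes showing, for every fixed $k\ge 1$, that
\begin{equation*}
\frac{1}{N}\sum_{n=1}^{N-k}\gD_n\gD_{n+k}\,\xrightarrow[N\to\infty]{\rm a.s.}\, q_k \,:=\, \tfrac{1}{4}(1+\kappa_k).
\end{equation*}
Writing $\gD_n\gD_{n+k}=X_{j(n)}X_{j(n+k)}$ with $j(n)$ the index of the excursion containing $n$, I would split the sum according to whether $j(n)=j(n+k)$ (same excursion, equivalent to $\tau$ avoiding $\{n,\ldots,n+k-1\}$) or not. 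The same-excursion part equals $\sum_{j\le \cN_N} X_j(\tau_j-\tau_{j-1}-k)_+$ up to an $O(k)$ boundary term, and renewal SLLN gives $\frac{1}{N}\sum_{j\le\cN_N}(\tau_j-\tau_{j-1}-k)_+\to \frac{1}{\mu}\bE[(\tau_1-k)_+]=\kappa_k$, so the SLLN for $(X_j)$ halves this to $\frac{1}{2}\kappa_k$ in the limit. The different-excursion part has complementary cardinality with limit $1-\kappa_k$ and, conditionally on $\tau$, is a sum of products of \emph{distinct} independent Bernoulli$(1/2)$'s; its conditional mean converges to $\frac{1}{4}(1-\kappa_k)$ by the same renewal computation, while its conditional variance is $o(1)$ a.s.\ by an $L^2$ estimate based on the vanishing of the conditional covariance of $\gD_n\gD_{n+k}$ and $\gD_{n'}\gD_{n'+k}$ as soon as $\{j(n),j(n+k)\}$ and $\{j(n'),j(n'+k)\}$ are disjoint, combined with $\max_{j\le\cN_N}(\tau_j-\tau_{j-1})=o(N)$ a.s. The deterministic (middle) equality of \eqref{conv:cascopol1} is obtained by the same decomposition applied to $\bE[\gD_n\gD_{n+k}]=\frac{1}{4}+\frac{1}{4}\bP(\tau\cap\{n,\ldots,n+k-1\}=\eset)$, the classical renewal theorem $\bP(j\in\tau)\to 1/\mu$, and Ces\`aro summation.

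Finally, since $\bigl|\tfrac{1}{N}\sum_{n=1}^{N-k}\gD_n\gD_{n+k}\bigr|\le 1$ uniformly in $N,k$ and $\sum_{k\ge 0}|\rho_k|<\infty$ by Assumption \ref{hyp:correlations}, dominated convergence allows me to pass the limit inside the sum over $k$, and using $\rho_0=\kappa_0=1$ to restore the $k=0$ term,
\begin{equation*}
\lim_{N\to\infty}\frac{1}{N}\sum_{n,m=1}^N\rho_{nm}\gD_n\gD_m \,=\, \frac{\rho_0}{2} + 2\sum_{k\ge 1}\rho_k q_k \,=\, \frac{1}{4}\sum_{k\in\bbZ}\rho_k + \frac{1}{4}\sum_{k\in\bbZ}\rho_k\kappa_k \,=\, \frac{1}{4}\gU_\infty + \frac{1}{4}\ccop.
\end{equation*}
The one non-routine step is the $L^2$/concentration estimate for the different-excursion contribution at fixed $k$; everything else is renewal-SLLN bookkeeping and dominated convergence.
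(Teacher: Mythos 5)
Your proposal is correct, but it takes a genuinely different route from the paper. The paper decorates the backward recurrence time process $(A_k)_k$ with the excursion sign to get a Markov chain $\tilde A_k = (A_k,\Delta_k)$, then enlarges it to a chain $W_k$ remembering the last $q$ states, and invokes the Markov chain ergodic theorem to obtain a.s.\ convergence of $\frac{1}{N}\sum_n G(W_n)$ for bounded $G$ in one stroke; the per-lag limits are then read off from the explicit stationary measure, and the truncation at $q$ is handled by summability of $\rho$ exactly as in your last step. Your approach replaces this ergodic-theorem black box by an elementary excursion decomposition, the SLLN, and a second-order concentration argument. Both routes share the lag-$k$ truncation and dominated convergence; the paper's gives a.s.\ convergence for free, while yours is more hands-on and isolates the combinatorics.

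The one place your sketch is imprecise is the concentration step for the different-excursion contribution. You justify it by invoking $\max_{j\le\cN_N}(\tau_j-\tau_{j-1})=o(N)$ a.s., but that only gives a conditional variance $o(N^2)$, hence $L^2$ convergence of the normalized sum at each $N$ — not a summable rate, so Borel--Cantelli along a polynomial subsequence does not follow. Fortunately, a sharper and cleaner bound is available and does not involve the longest excursion at all: if $j(n)\neq j(n+k)$, then $n$ lies within distance $k$ of an excursion endpoint, so for a fixed such $n$, the number of $n'$ with $j(n')\neq j(n'+k)$ and $\{j(n'),j(n'+k)\}\cap\{j(n),j(n+k)\}\neq\eset$ is at most $4k$. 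Equivalently, writing the different-excursion sum as $\sum_{j<j'} c_{j,j'}(N)\,X_j X_{j'}$, one has $\sum_{j'>j} c_{j,j'}(N)\leq k$ and $\sum_{j<j'} c_{j,j'}(N)\leq k$ for every fixed index, and $\sum_{j<j'} c_{j,j'}(N)\leq N$ in total, from which $\mathrm{Var}\bigl(\sum_{j<j'}c_{j,j'}X_jX_{j'}\,\big|\,\tau\bigr)=O(kN)$. The conditional variance of the normalized sum is then $O(k/N)$, which is summable along $N_m=m^2$; Borel--Cantelli gives a.s.\ convergence along that subsequence, and the uniform bound $0\leq \gD_n\gD_{n+k}\leq 1$ controls the oscillation in between. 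With this replacement your argument closes, and the rest (the SLLN for the same-excursion part, the identification of the per-lag limit as $\tfrac14(1+\kappa_k)$ via the renewal theorem, and the final dominated-convergence pass to $\tfrac14\gU_\infty+\tfrac14\ccop$) is all sound.
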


\begin{lemma}[Convergence in the pinning model]
\label{lem:convergence.pin}
Let $\bP^{\otimes 2}$ refer to the law of two independent copies of the renewal process, denoted by $\tau$ and $\tau'$.
If $\mu=\bE[\tau_1]<+\infty$, then $\bP^{\otimes 2}$-a.s,
\begin{align}
\lim_{N\to\infty}\frac{1}{N}\sum_{n=1}^{N} \gd_n &= \frac{1}{\mu}, \label{conv:caspin0}\\
\lim_{N\to\infty}\frac{1}{N}\sum_{n,m=1}^{N}  \rho_{nm} \gd_n \gd_m &= \lim_{N\to\infty}\frac{1}{N}\sum_{n,m=1}^{N}  \rho_{nm} \bE[\gd_n \gd_m] =
 \frac{\cpin}{\mu},\label{conv:caspin1}\\
\lim_{N\to\infty}\frac{1}{N}\sum_{n,m=1}^{N}  \rho_{nm} \gd_n \gd'_m &= \lim_{N\to\infty}\frac{1}{N}\sum_{n,m=1}^{N}  \rho_{nm} \bE^{\otimes 2}[\gd_n \gd'_m] =
 \frac{\gU_\infty}{\mu^2}, \label{conv:caspin2}
\end{align}
where $\cpin$ has been defined in \eqref{eq:cpin}.
\end{lemma}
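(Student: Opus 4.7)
The plan is to handle the three displays in sequence, with the summability $\sum_n |\rho_n|<\infty$ entering only at the very end as the ingredient that legitimizes exchanging $\lim_N$ with $\sum_k$.

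First, \eqref{conv:caspin0} is just the classical strong law of renewals: writing $\cN_N := |\tau \cap \{1,\dots,N\}|$, the identity $\sum_{n=1}^N \delta_n = \cN_N$ together with $\tau_n/n \to \mu$ a.s.\ (SLLN for the IID inter-arrivals) gives $\cN_N/N \to 1/\mu$ a.s.

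For \eqref{conv:caspin1}, I would exploit $\rho_{nm}=\rho_{|n-m|}$ to rewrite
\[
\frac{1}{N}\sum_{n,m=1}^N \rho_{nm}\delta_n\delta_m \,=\, \frac{\cN_N}{N} + \frac{2}{N}\sum_{k=1}^{N-1}\rho_k\, Y_N(k), \quad Y_N(k):=\sum_{n=1}^{N-k}\delta_n\delta_{n+k},
\]
and handle the sum over $k$ by dominated convergence. For each fixed $k\geq 1$, writing $Y_N(k)=\sum_{j\geq 1:\, \tau_j\leq N-k}\ind\{\tau_j+k\in\tau\}$, the summands $W_j(k):=\ind\{\tau_j+k\in\tau\}$ form a stationary ergodic sequence in $j$ (each is a deterministic function of the shifted IID inter-arrivals) with common mean $\bP(k\in\tau)$; Birkhoff's theorem combined with \eqref{conv:caspin0} then yields $Y_N(k)/N\to \bP(k\in\tau)/\mu$ a.s. The uniform bound $0\leq Y_N(k)/N\leq \cN_N/N$, which is a.s.\ bounded, and the summability $\sum_{k\in\bbZ}|\rho_k|<\infty$ from Assumption~\ref{hyp:correlations}, then authorize an interchange of sum and limit, producing the stated value $\cpin/\mu$. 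The annealed counterpart follows from $\bE[\delta_n\delta_{n+k}]=\bP(n\in\tau)\,\bP(k\in\tau)$ (strong Markov), the renewal theorem $\bP(n\in\tau)\to 1/\mu$, and Ces\`aro.

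For \eqref{conv:caspin2}, the annealed version is direct from independence $\bE^{\otimes 2}[\delta_n\delta'_m]=\bP(n\in\tau)\,\bP(m\in\tau')\to 1/\mu^2$ together with Ces\`aro and dominated convergence. For the a.s.\ statement, decomposing once more by offset $k$, it suffices to prove that for each fixed $k\geq 0$, $\frac{1}{N}\sum_{m=1}^{N-k}\delta_{m+k}\delta'_m \to 1/\mu^2$ a.s. I would couple each of $\tau,\tau'$ with its two-sided Palm/stationary version (legitimate since $\mu<\infty$) and apply Birkhoff on the independent product $(\hat\delta_n,\hat\delta'_n)_{n\in\bbZ}$: the joint shift is stationary and ergodic as the product action of two independent ergodic systems, and the two-point function factorizes to $1/\mu^2$ by independence. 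The same uniform bound $\leq \cN_N/N$ and the summability of $\rho$ again close the argument by dominated convergence. The main obstacle will be precisely this a.s.\ step, since the undelayed renewal is not itself stationary: one needs either the Palm coupling just described, or a conditional-LLN route (condition on $\tau'$, invoke the single-renewal statement \eqref{conv:caspin1} for $\tau$, and transfer to the product measure via Fubini). Everything else is routine renewal theory combined with summability of $\rho$.
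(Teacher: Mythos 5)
Your proposal is essentially correct in structure and takes a genuinely different and more elementary route than the paper. The paper's proof works entirely inside the framework of the backward recurrence time Markov chain $A_k = k - \tau_{\cN_k}$: it builds the auxiliary chain $W_k = (\tilde A_k,\ldots,\tilde A_{k-q})$ that remembers the last $q$ states, applies the Markov chain ergodic theorem (valid for any starting distribution, by positive recurrence and aperiodicity), truncates the correlation sum at offset $q$, and finally lets $q\to\infty$. Your approach instead decomposes directly by offset $k$, writes the $k$-th term as a Birkhoff average of the stationary ergodic sequence $W_j(k)=\ind\{\tau_j+k\in\tau\}$ (a deterministic function of the shifted IID inter-arrivals), and then interchanges $\lim_N$ and $\sum_k$ by dominated convergence using $0\leq Y_N(k)/N\leq \cN_N/N \leq 1$ and $\sum_k|\rho_k|<\infty$. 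For \eqref{conv:caspin0} and \eqref{conv:caspin1} this works and is cleaner than the paper's $q$-memory chain; the truncation-at-$q$ and your dominated convergence are playing the same role.

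The genuine gap is in \eqref{conv:caspin2}. You assert that ``the joint shift is stationary and ergodic as the product action of two independent ergodic systems''; this is false in general (take two independent $\Z/2\Z$ rotations: the product is not ergodic because $\{X_0 = Y_0\}$ is a nontrivial invariant set). Ergodicity of the product requires weak mixing of at least one factor. In the present situation the fix is available --- the stationary (Palm) version of an aperiodic positive recurrent renewal process is mixing, because the stationary backward recurrence chain is mixing as soon as $K$ is aperiodic (which holds here since $K(n)>0$ for all large $n$) --- but you must invoke it explicitly. Your alternative ``conditional-LLN route'' is also not quite worked out: conditioning on $\tau'$, what you need is a law of large numbers for $\delta$ sampled along the random set $\tau'$, which does not follow from \eqref{conv:caspin1} as stated. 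The paper sidesteps both issues at once: the product of two independent positive recurrent, irreducible, aperiodic Markov chains on a countable state space is again such a chain, and the ergodic theorem for such chains holds for any initial state, so there is no stationarity mismatch and no product-ergodicity subtlety. If you want to keep the elementary/Birkhoff flavor, patch the argument by replacing ``ergodic'' with ``mixing'' and adding the standard coupling from the undelayed to the Palm-stationary renewal process (with discrepancy negligible for Ces\`aro averages).
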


\begin{proof}[Proof of Lemmas \ref{lem:convergenceapp} and \ref{lem:convergence.pin}.]
We borrow here an idea that can be used to prove the Renewal Theorem, see \cite{cf:Asm}.
We recall the definition of the \emph{backward recurrence time process} $(A_k)_{k\in \bbN_0}$. It is a Markov chain which indicates the
time elapsed since the last renewal: $A_k = k - \tau_{\cN_k}$, where $\cN_k = |\tau\cap\{1,\ldots, k\}|$.
We can decorate the Markov chain $(A_k)_{k\in\bbN_0}$ by adding the ``sign'' $\Delta_k$, defining $\tilde A_k:=(A_k, \Delta_k)$
which is also a Markov chain.
Since $K(n)>0$ for all large $n\in \bbN$ (see \eqref{defK}) and $K(\infty)=0$,
$(\tilde A_k)_{k\geq 0}$
is recurrent, irreducible, and aperiodic, and  with the additional assumption that $\mu = \bE[\tau_1] <\infty$, the stationary probability measure $\pi$ is explicit:
$\pi(a,\Delta) = \frac{1}{2\mu} \bP(\tau_1\geq a)$.

Let us now define a generalization of this process, which memorizes the last $q$ states, for some given integer $q$ (arbitrarily large):
let $W_{k}:=(\tilde A_k, \tilde A_{k-1}, \ldots, \tilde A_{k-q})$, where by convention
we set $\tilde A_k=(0,0)$ for $k<0$.
Then, $(W_k)_{k\in \bbN_0}$ is also a  positive recurrent irreducible aperiodic Markov chain, with an explicit stationary probability measure denoted by $\Pi$. If we write $W=((a_0,\Delta_0),\ldots,(a_q,\Delta_q))$ and denote by $j_1<j_2<\cdots<j_m$ the (ordered) indices such that $a_{j_k}=0$ (the renewal points), then
\begin{equation}
\Pi\big( W \big)
    = \pi(a_0,\Delta_0) \prod_{k=1}^{m-1} \frac12 K(j_{k+1}-j_{k})
  \prod_{j\notin \{j_1,\ldots,j_m\} }\!\!\! \ind_{\{ a_{j+1}=a_{j}-1\}} \ind_{\{\Delta_{j+1} = \Delta_{j}\}}.
  \end{equation}

By using the ergodic theorem for Markov chains,
we get that for any bounded function $G: \bbN^{2q} \to \bbR$,
$\frac1N \sum_{k=1}^{N} G(U_k)$ converges $\P$-a.s.\ to $\bE_{\Pi}[G(U_0)]$ as $N\to\infty$.
Applying this result to the test functions $G(U)=\ind_{\{a_0=0\}}$ and $G(U)= \Delta_0$, one gets (\ref{conv:cascopol0}) and (\ref{conv:caspin0}). To prove \eqref{conv:caspin1}, we write
\begin{equation}
 \frac{1}{N}\sum_{n,m=1}^{N}  \rho_{nm} \gd_n \gd_m  = \frac{1}{N} \sum_{n=1}^{N}   \gd_n
    + \frac{2}{N} \sum_{n=1}^{N}   \gd_n \sum_{j=1}^q \rho_{j} \gd_{ n+j} + R_N, 
\end{equation}
where the error term $R_N$ satisfies $|R_N| \leq 2 \sum_{j\geq q} |\rho_j| + \frac{q}{N} \sum_{j\geq1} |\rho_j|$.
Taking the limit $N\to\infty$, and then letting $q\to \infty$,
we obtain \eqref{conv:caspin1}, since 
\begin{equation}
\bE_{\Pi}\Big[ \ind_{\{a_0=0\}} \sum_{j=1}^q \rho_j \ind_{\{a_j=0\}}\Big]
 = \frac{1}{\mu}\sum_{j=1}^q \rho_j \bP(j\in\tau).
 \end{equation}

We proceed in the same way to obtain \eqref{conv:cascopol1}. Compute first
\begin{multline}
  \bE_{\Pi}\Big[ \gD_0 \sum_{j=1}^q \rho_j \gD_j\Big]= \frac{1}{4}\sum_{j=1}^q \rho_j +  \frac{1}{4} \sum_{n=1}^q \Big(\sum_{j=1}^{n} \rho_j \Big)\frac{1}{\mu}\bP(\tau_1 \geq n +1) \\
  + \frac14 \big(\sum_{j=1}^{q} \rho_j \big) \sum_{k\geq q+1} \frac{1}{\mu} \bP(\tau_1\geq k+1).
\end{multline}
Then, one can write $\frac{1}{N}\sum_{n,m=1}^{N}  \rho_{nm} \gD_n \gD_m  = \frac{1}{N} \sum_{n=1}^{N}   \gD_n
    + \frac{2}{N} \sum_{n=1}^{N}   \gD_n \sum_{j=1}^q \rho_{j} \gD_{ n+j} + \tilde R_N$, where, similarly to the above, one has $\lim_{q\to\infty}\lim_{N\to\infty}\tilde R_N = 0$.

Therefore, we have $\bP$-a.s.
\begin{multline}
\lim_{N\to\infty} \frac{1}{N}\sum_{n,m=1}^{N}  \rho_{nm} \gD_n \gD_m = \frac12 +2 \lim_{q\to\infty}  \bE_{\Pi}\Big[ \gD_0 \sum_{j=1}^q \rho_j \gD_j\Big]\\
=\frac14 +\frac14 \gU_{\infty} +\frac{1}{2\mu} \sum_{n\in\bbN} \bbP(\tau_1\geq n+1) \Big(\sum_{j=1}^{n} \rho_j\Big).
\end{multline}
This actually gives exactly \eqref{conv:cascopol1}, because
\begin{multline}
\ccop = \frac{1}{\mu}\bE\bigg[ \sum_{i,j=1}^{\tau_1} \rho_{ij}\bigg]
 = 1+ \frac{2}{\mu}\bE\bigg[ \sum_{i=1}^{\tau_1} \sum_{k=1}^{i-1} \rho_k\bigg]\\
 = 1+ \frac{2}{\mu} \sum_{n\in\bbN} \bP(\tau_1=n) \sum_{i=1}^{n} \sum_{k=1}^{i-1} \rho_k
 = 1 + \frac{2}{\mu} \sum_{n\in\bbN} \bP(\tau_1\geq n +1) \sum_{k=1}^{n} \rho_k,
\end{multline}
where the last inequality is obtained via an Abel summation by parts.

The convergence in \eqref{conv:caspin2}  is obtained by the same lines of reasoning, using two independent copies of the backward recurrence time process. Details are left to the reader.
\end{proof}

\begin{rem}\rm
\label{rem:muinfty}
We want to stress that if $\mu=+\infty$, then the following convergence holds:
\begin{equation}
\label{conv:cascopol2}
 \lim_{N\to\infty}\frac{1}{N}\sum_{n,m=1}^{N}  \rho_{nm} \bE[\gD_n \gD_m] =
\frac12 \gU_{\infty},
\end{equation}
suggesting the interpretation $\ccop =\gU_{\infty}$ in that case. The limit in \eqref{conv:cascopol2} can be proved by observing that $\bE[\gD_n \gD_m] = \frac{1}{2} - \frac14 \bP(\tau\cap [n,m) \neq \emptyset)$. A union bound gives that, for any fixed $p\geq 1$, $\bP(\tau\cap [n,n+p) \neq \emptyset) \leq \sum_{k=n}^{n+p} \bP(k\in\tau) \stackrel{n\to\infty}{\to} 0$, because $\mu=+\infty$ implies that $\bP(k\in\tau)$ converges to $0$. Since $\sum_{k\in\bbZ} |\rho_k| <+\infty$, we get by dominated convergence that 
\begin{equation}
\lim_{n\to\infty} \sum_{m=n}^{+\infty}  \rho_{nm} \bP(\tau\cap [n,m) \neq\emptyset)= \lim_{n\to\infty} \sum_{k\in\bbN} \rho_k \bP(\tau \cap [n,n+k) \neq \emptyset) = 0.
\end{equation}
Using Cesar\`o summation together with the symmetry in $n,m$, one gets \eqref{conv:cascopol2}.
\end{rem}

\section{Decoupling inequalities}
\label{sec:decouple.app}
We first state a general decoupling inequality, that we prove using standard interpolation techniques, and then explain how to apply it to obtain Lemmas \ref{lem:decoupling0} and \ref{lem:decouple}.

Let us first recall the Gaussian integration-by-part formula. If $\go = (\go_n)_{n\in\bbZ}$ is a stationary Gaussian sequence with correlation matrix $\gU = (\rho_{ij})_{i,j\in\bbZ}$, and $f$ a sufficiently smooth function, then
\begin{equation}
\label{eq:Gaussian.IPP}
\forall k\in\bbZ,\qquad \bbE[\omega_k f(\go)] = \sum_{n\in\bbZ} \rho_{kn} \bbE[\partial_{\go_n}f(\go)].
\end{equation}

\begin{lemma}[{\bf General Decoupling Inequality}]
\label{lem:decouple.general}
Let $\bbP$ be the law of a centered Gaussian sequence $\go=(\go_n)_{n\in\bbZ}$, with a correlation matrix $\gU= (\rho_{ij})_{i,j\in\bbZ}$. If $\cI$ and $\cJ$ are two disjoint subsets of $\bbZ$, we define
\begin{equation}
C(\cI,\cJ) = \sum_{i\in \cI,\, j\in\cJ} \mid \rho_{ij} \mid.
\end{equation}
Let $f:\bbR^\cI \mapsto \bbR$ and $g:\bbR^\cJ \mapsto \bbR$ be two functions which are $\cC^2$ and such that, for some constant $c\in(0,\infty)$,
\begin{equation}
\label{eq:ass.f.g}
\begin{split}
\forall i\in \cI,\, j\in \cJ,\qquad & \mid \partial_{\omega_i}f \mid  \leq c\, f,\qquad \mid \partial_{\omega_j}g \mid  \leq c\, g \, ;\\
\forall i,i'\in \cI,\, j,j'\in \cJ, \qquad & \mid \partial^2_{\omega_i,\go_{i'}}f \mid  \leq c^2\, f,\qquad \mid \partial^2_{\omega_j,\go_{j'}}g \mid  \leq c^2\, g \, .
\end{split}
\end{equation}
Denoting by $\go_\cI$ the vector $(\go_n)_{n\in \cI}$, for any subset $\cI$ of $\bbZ$, we have
\begin{equation}
\bbE[f(\go_\cI) g(\go_\cJ)] \leq e^{c^2 C(\cI,\cJ)} \bbE[f(\go_\cI)] \bbE[g(\go_\cJ)].
\end{equation}
\end{lemma}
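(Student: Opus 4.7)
The plan is to prove the bound by a Gaussian interpolation argument. I would introduce two independent copies $\tilde\go$, $\hat\go$ of $\go$, also independent of $\go$ itself, and for $t\in[0,1]$ define
\begin{equation*}
\go_i(t):=\sqrt{t}\,\go_i+\sqrt{1-t}\,\tilde\go_i\ \ (i\in\cI),\qquad \go_j(t):=\sqrt{t}\,\go_j+\sqrt{1-t}\,\hat\go_j\ \ (j\in\cJ).
\end{equation*}
The crucial feature of this choice is that $\bbE[\go_i(t)\go_{i'}(t)]=\rho_{ii'}$ for $i,i'\in\cI$ and $\bbE[\go_j(t)\go_{j'}(t)]=\rho_{jj'}$ for $j,j'\in\cJ$, so the within-block covariances are preserved along the interpolation, while only the cross covariances scale: $\bbE[\go_i(t)\go_j(t)]=t\,\rho_{ij}$ for $(i,j)\in\cI\times\cJ$. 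Setting $\Psi(t):=\bbE[f(\go_\cI(t))\,g(\go_\cJ(t))]$, one has $\Psi(1)=\bbE[f(\go_\cI)g(\go_\cJ)]$ and, since $\tilde\go_\cI$ and $\hat\go_\cJ$ are independent, $\Psi(0)=\bbE[f(\go_\cI)]\bbE[g(\go_\cJ)]$. It therefore suffices to control $\Psi'(t)$ on $(0,1)$ and integrate.

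Next I would compute $\Psi'(t)$ using $\tfrac{d}{dt}\go_i(t)=\tfrac{\go_i}{2\sqrt{t}}-\tfrac{\tilde\go_i}{2\sqrt{1-t}}$ (and its analogue for $j\in\cJ$), together with the Gaussian integration-by-parts formula \eqref{eq:Gaussian.IPP} applied separately to the three independent Gaussian families $\go$, $\tilde\go$, $\hat\go$. For each $i\in\cI$, IPP on $\go_i$ produces contributions indexed by both $\cI$ (Hessian terms $\partial^2_{i,i'}f\cdot g$, $i'\in\cI$) and $\cJ$ (cross terms $\partial_i f\cdot\partial_j g$, $j\in\cJ$), each carrying a chain-rule factor $\sqrt{t}$ that combines with $\tfrac{1}{2\sqrt{t}}$ to give weight $\tfrac12$; meanwhile IPP on $\tilde\go_i$ produces only $\cI$-indexed Hessian terms (since $g$ does not depend on $\tilde\go$), with weight $-\sqrt{1-t}\cdot\tfrac{1}{2\sqrt{1-t}}=-\tfrac12$. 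The diagonal $\cI$-contributions therefore cancel exactly, and a symmetric cancellation occurs for $j\in\cJ$; one is left with
\begin{equation*}
\Psi'(t)=\sum_{i\in\cI}\sum_{j\in\cJ}\rho_{ij}\,\bbE\bigl[(\partial_if)(\go_\cI(t))\,(\partial_jg)(\go_\cJ(t))\bigr].
\end{equation*}

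The conclusion is then a Gronwall step: using the first-order hypotheses $|\partial_i f|\leq c f$ and $|\partial_j g|\leq c g$ (which implicitly require $f,g\geq 0$, which is automatic in the intended applications where $f,g$ are partition-function exponentials), one obtains $\Psi'(t)\leq c^2\,C(\cI,\cJ)\,\Psi(t)$, and integration over $[0,1]$ yields $\Psi(1)\leq e^{c^2 C(\cI,\cJ)}\,\Psi(0)$, as claimed. The main obstacle will be the careful bookkeeping in the IPP computation, i.e.\ verifying that the within-block Hessian terms coming from the original $\go$ cancel exactly with those coming from the replicas $\tilde\go$ and $\hat\go$; this cancellation is precisely the structural reason for choosing this particular interpolation (it freezes the within-block covariances and activates only the cross covariance $t\rho_{ij}$). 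The second-order hypotheses in \eqref{eq:ass.f.g} play no role in the final inequality but are needed technically, both to justify differentiating $\Psi$ under the expectation and to ensure integrability of the Hessian terms that appear transiently before canceling.
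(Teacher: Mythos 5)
Your proposal is correct and is essentially the paper's own argument: the paper also builds the block-decoupled Gaussian from two independent copies of $\go$, interpolates via $\sqrt{t}$, $\sqrt{1-t}$ so that within-block covariances are frozen while cross covariances scale linearly, uses Gaussian integration by parts to reduce $\Psi'(t)$ to the pure cross term $\sum_{i,j}\rho_{ij}\,\bbE[\partial_i f\,\partial_j g]$ after exact cancellation of the Hessian contributions, and closes with Gronwall. The only (cosmetic) difference is that you run the interpolation in the opposite direction ($t=0$ decoupled, $t=1$ coupled), whereas the paper sets $t=0$ coupled and $t=1$ decoupled, which merely flips the sign of the derivative.
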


\begin{proof}
By using two independent copies of $\go$, one can build a centered Gaussian sequence $\tilde{\go}$, independent of $\go$, with the following covariance structure:
\begin{equation}
\label{eq:cov.gto}
\forall i,j\in \cI\cup \cJ,\qquad \bbE[\gto_i \gto_j] = \rho_{ij} (\ind_{\{i,j\in \cI\}} + \ind_{\{i,j\in \cJ\}}).
\end{equation}
For convenience we still use the symbol $\bbP$ for the law of $(\go,\gto)$. We interpolate between $\go$ and $\gto$ by defining
\begin{equation}
\forall n\in\bbZ,\,\forall t\in[0,1],\qquad \go_n(t) = \sqrt{t}\, \gto_n + \sqrt{1-t}\, \go_n,
\mbox{ and } \quad \varphi(t) = \bbE[f(\go_\cI(t)) g(\go_\cJ(t))].
\end{equation}
Note that $\varphi(0) = \bbE[f(\go_\cI) g(\go_\cJ)]$ and $\varphi(1) = \bbE[f(\go_\cI)] \bbE[g(\go_\cJ)]$, so what we want to prove is
\begin{equation}
\varphi(0) \leq C \varphi(1), \qquad \mbox{where }\quad C = e^{c^2 C(\cI,\cJ)}.
\end{equation}
By Gronwall's lemma, it is enough to prove the following inequality:
\begin{equation}
\label{eq:decouple.Gronwall}
\forall t\in (0,1),\qquad \varphi'(t) \geq - C \varphi(t).
\end{equation}
Let us first compute $\varphi'(t)$:
\begin{multline}
\varphi'(t) = \bbE[\partial_tf(\go_\cI(t))\, g(\go_\cJ(t))] + \bbE[f(\go_\cI(t))\, \partial_tg(\go_\cJ(t))]\\
= \sum_{i\in \cI} \bbE[\partial_t \go_i(t)\, \partial_{\go_i}f(\go_\cI(t))\, g(\go_\cJ(t))] + \sum_{j\in \cJ} \bbE[\partial_t \go_j(t)\, f(\go_\cI(t))\, \partial_{\go_j}g(\go_\cJ(t))],
\end{multline}
and since
\begin{equation}
\forall n\in\bbZ,\qquad \partial_t \go_n(t) = \frac{1}{2\sqrt{t}}\, \gto_n - \frac{1}{2\sqrt{1-t}}\, \go_n,
\end{equation}
we obtain
\begin{multline}
\varphi'(t) = \frac12 \sum_{i\in \cI} \bbE\Big[\Big(\frac{\gto_i}{\sqrt{t}} - \frac{\go_i}{\sqrt{1-t}}\Big) \partial_{\go_i}f(\go_\cI(t))\, g(\go_\cJ(t))\Big]\\
+ \frac12 \sum_{j\in \cJ} \bbE\Big[\Big(\frac{\gto_j}{\sqrt{t}} - \frac{\go_j}{\sqrt{1-t}}\Big) f(\go_\cI(t))\, \partial_{\go_j} g(\go_\cJ(t))\Big].
\end{multline}
Using \eqref{eq:Gaussian.IPP} and \eqref{eq:cov.gto}, we obtain for $i\in \cI$,
\begin{equation}
\bbE[\gto_i\, \partial_{\go_i}f(\go_\cI(t))\, g(\go_\cJ(t))] = \sqrt{t} \sum_{j\in \cI} \rho_{ij}  \bbE[\partial^2_{\go_i,\go_j}f(\go_\cI(t))\, g(\go_\cJ(t))],,
\end{equation}
\begin{multline}
\bbE[\go_i\, \partial_{\go_i}f(\go_\cI(t))\, g(\go_\cJ(t))] = \sqrt{1-t} \sum_{j\in \cI} \rho_{ij}  \bbE[\partial^2_{\go_i,\go_j}f(\go_\cI(t))\, g(\go_\cJ(t))]\\
+ \sqrt{1-t} \sum_{j\in \cJ} \rho_{ij} \bbE[\partial_{\go_i}f(\go_\cI(t))\, \partial_{\go_j}g(\go_\cJ(t))],
\end{multline}
and for $j\in \cJ$,
\begin{equation}
\bbE[\gto_j f(\go_\cI(t))\, \partial_{\go_j}g(\go_\cJ(t))] = \sqrt{t} \sum_{i\in \cJ} \rho_{ij}  \bbE[f(\go_\cI(t))\, \partial^2_{\go_i,\go_j} g(\go_\cJ(t))],
\end{equation}
\begin{multline}
\bbE[\go_j\, f(\go_\cI(t))\, \partial_{\go_j} g(\go_\cJ(t))] = \sqrt{1-t} \sum_{i\in \cJ} \rho_{ij}  \bbE[f(\go_\cI(t))\, \partial^2_{\go_i,\go_j}g(\go_\cJ(t))]\\
+ \sqrt{1-t} \sum_{i\in \cI} \rho_{ij} \bbE[\partial_{\go_i}f(\go_\cI(t))\, \partial_{\go_j}g(\go_\cJ(t))].
\end{multline}
Adding everything up, we get
\begin{equation}
\varphi'(t) = -\sum_{i\in \cI,\, j\in\cJ} \rho_{ij} \bbE[\partial_{\go_i}f(\go_\cI(t)) \partial_{\go_j}g(\go_\cJ(t))],
\end{equation}
from which we deduce \eqref{eq:decouple.Gronwall}, thanks to \eqref{eq:ass.f.g}.
\end{proof}

\smallskip
\begin{proof}[Proof of Lemmas \ref{lem:decoupling0} and \ref{lem:decouple}.] We apply iteratively Lemma \ref{lem:decouple.general} to specific functions of $\go$.
Observe that, in any decoupling lemma that we are using (Lemmas \ref{lem:decoupling0} and \ref{lem:decouple}), the functions to which we apply Lemma \ref{lem:decouple.general} are of the form $\cZ(\go)^{\gamma}$, where $0<\gamma\leq 1$ and $\cZ(\go)$ is a finite positive linear combination of finite products of functions of the form
\begin{equation}
Z_{\cI_r}(\go) = \bE\big[e^{-2\gl \sum_{n\in \cI_r} (\go_n + h)\sigma_n}\big],
\end{equation}
for disjoint intervals $\cI_r$, and where $\sigma_n$ is a random variable taking values in $\{0,1\}$.
Note that the choice $\sigma_n=\gd_n$ gives the partition function of the pinning model (up to a change of parameters); $\sigma_n = \gD_n$ the partition function of the copolymer model; and finally $\bP(\sigma_n = 1,\, \forall n\in I_k) = \bP(\sigma_n = 0,\, \forall n\in I_k) = 1/2$ leads to the partition function restricted to one large excursion of size $|\cI_r|$, as in \eqref{eq:z.onestretch}.

It is then easy to check that for all $i,i'\in \cI_r$, $\mid\!\! \partial_{\go_i}Z_{\cI_r}(\go)\!\! \mid \leq 2\gl Z_{\cI_r}(\go)$, and $\mid\!\! \partial^2_{\go_i, \go_{i'}}Z_{\cI_r}(\go) \!\!\mid \leq 4\gl^2 Z_{\cI_r}(\go)$. Therefore, for $i,i'\in\bigcup_r \cI_r$,
\begin{equation}
\mid\!\! \partial_{\go_i} \cZ(\go)^{\gamma} \!\! \mid = \gamma \mid\!\!  [\partial_{\go_i} \cZ(\go)] \cZ(\go)^{\gamma-1}\!\!  \mid  \leq 2\gl\, \cZ(\go)^{\gamma}, \quad \text{and} \quad \mid \!\! \partial^2_{\go_i,\go_{i'}} \cZ(\go)^{\gamma} \!\! \mid \leq 4\gl^2\,  \cZ(\go)^{\gamma},
\end{equation}
which proves that \eqref{eq:ass.f.g} is fulfilled in our context, with a constant $c=2\lambda$.

Now, we can use Lemma \ref{lem:decouple.general}, and only the different constants $C(\cI,\cJ)$ involved remain to be estimated. Note that $C(\cI,\cJ)\leq \sum_{i\in\cI}\sum_{j\notin\cI} |\rho_{ij}|$, so that the definition of $\cJ$ does not matter. To obtain Lemma \ref{lem:decoupling0}, we apply Lemma \ref{lem:decouple.general} only once, with $\cI = \bigcup_{i=1}^{\ell} B_{j_i}$ (as defined in Section \ref{sec:coarse}): one gets $C(\cI,\cJ)\leq \ell V_{k_{\gl,\gep}}$, where $V_k :=2\sum_{i=1}^k \sum_{j > k} |\rho_{ij}|$. To obtain Lemma \ref{lem:decouple}, we apply  Lemma \ref{lem:decouple.general} repeatedly ($3\ell-1$ times), with each time some constant $C(\cI,\cJ)$ where $\cI$ is an interval of length $s\leq k_{\gl,\gep}$, so that $C(\cI,\cJ)\leq V_s$.
For both lemmas, we are therefore only left to show that if $\lambda$ is small enough, then $4\gl^2 V_s \leq \log 2$, for all $s\in\{1,\ldots,k_{\gl,\gep}\}$.

Indeed, $V_s = \epsilon(s) \, s$, where $\epsilon (s)$ goes to $0$ as $s$ goes to infinity, because of the summability of the correlations. One controls $4\lambda^2 V(s) = 4 t_{\gep} \epsilon(s) s/k_{\gl,\gep}$. Let us write $s_0:=(4t_{\gep} \max_{s\in\bbN}\{\epsilon (s)\})^{-1} \log 2$: if $s\leq s_0\, k_{\gl,\gep}$, then  $4\lambda^2 V(s) \leq \log 2$ ; if $s\in  (s_0\,k_{\gl,\gep}, k_{\gl,\gep})$, then $4\lambda^2 V(s) \leq 4t_{\gep} \epsilon (s)$, which can be made arbitrarily small by taking $s\geq s_0 k_{\gl,\gep}$ large, in other words by taking $\lambda$ small.
\end{proof}

\begin{rem}\rm
\label{rmk:dec.pin}
For the sake of conciseness, we do not detail the case of the pinning model, and leave it to the reader to check that analogous to Lemmas \ref{lem:decoupling0}-\ref{lem:decouple} hold, with an almost identical proof. Let us point out that the case of the pinning model is even simpler since the polymer does not collect any charge during the large excursions that occur between the visits of coarse-grained blocks.
\end{rem}

\section{Proof of Lemma \ref{lem:interpol}}
\label{App:interpolation}

The interpolation techniques we use here are inspired from \cite{cf:T2}, but many adaptations are however needed to deal with the correlated case. One writes
\begin{align}
&\tf^{\pin}_N(\gb,h) =\nonumber\\
&\hspace{0.5cm} \tf^{\pin}_{\a,N}(\gb,h) + \bbE \frac{1}{N} \log \E_{N,
\gb,h} \Big[ \exp\Big\{\gb \sum_{n=1}^N \omega_n \delta_n - \frac{\gb^2}{2} \sum_{n,m=1}^N \rho_{nm} \delta_n \delta_m \Big\} \Big],
\end{align}
where
\begin{equation}
\label{eq:defEngdh}
\E_{N,\gb,h}[\cdot] = \frac{\E\left[\,\cdot\,\exp\left\{ h \sum_{ n=1}^{ N} \delta_n + \frac12 \gb^2 \sum_{n,m=1}^{N} \rho_{nm} \delta_n \delta_m  \right\} \delta_N \right]}{\E\left[\exp\left\{h \sum_{n=1}^{N} \delta_n + \frac12 \gb^2 \sum_{n,m=1}^{N} \rho_{nm} \delta_n \delta_m  \right\} \delta_N \right]}.
\end{equation}
Let us denote by $\tau'$ an independent copy of  $\tau$. We define for $t\in[0,1]$, which plays the role of an interpolation parameter, and for $\kappa\in \bbR$, which can be seen as a coupling constant:
\begin{align}
\label{eq:defHngbtkappa}
H^{\otimes 2}_N(\gb,t,\kappa,\omega) := \sqrt{t}\gb \sum_{n=1}^N \omega_n (\delta_n + \delta_n') &- \frac{t\gb^2}{2} \sum_{n,m = 1}^N \rho_{nm} (\delta_n \delta_m + \delta_n' \delta_m')\nonumber\\& + \kappa \gb^2 \sum_{n,m = 1}^N \rho_{nm} \delta_n \delta_m',  
\end{align}
and for $\beta\geq 0$,
\begin{equation}
\psi(N,h\gb;t,\kappa) := \bbE \frac{1}{2N} \log \E^{\otimes 2}_{N,\beta,h}\bigg[\exp\{ H^{\otimes 2}_N(\gb,t,\kappa,\omega)\}\bigg].
\end{equation}

Since
\begin{align}
&\psi(N,h,\gb;t,\kappa=0) = \nonumber \\
&\hspace{2cm} \bbE \frac{1}{N} \log \E_{N,\beta,h}\Big[ \exp\Big\{\sqrt{t}\gb \sum_{n=1}^N \omega_n \delta_n - \frac{t\gb^2}{2} \sum_{n,m=1}^N \rho_{nm} \delta_n \delta_m \Big\}\Big],
\end{align}
we have
\begin{equation}
\label{eq:manquegronwall}
\tf^{\pin}(\gb,h)\geq \tf^{\pin}_N(\gb,h) = \tf^{\pin}_{\a,N}(\gb,h) + \psi(N,h,\gb;t=1,\kappa=0).
\end{equation}
To control $\psi(N,h,\gb;t=1,\kappa=0)$, we use
\begin{gather}
\label{eq:interpolation}
\partiald{t}\psi(N,h,\gb;t,\kappa) \leq  \partiald{\kappa}\psi(N,h,\gb;t,\kappa),\\
- \partiald{t}\psi(N,h,\gb;t,\kappa=0) = \partiald{\kappa}\psi(N,h,\gb;t,\kappa=0).
\label{eq:equ_partiald}
\end{gather}
The proofs of \eqref{eq:manquegronwall} and \eqref{eq:interpolation} are postponed to the end of this section.

Then, since for all $N,h,\gb,t$, the map $\kappa \to \psi(N,h;\gb,t,\kappa)$ is convex and non-decreasing on $\bbR^+$ (as it is clear by computing the first and second order derivatives with respect to $\kappa$, see \eqref{eq:derivkappa}), one gets for all $M>0$ and $t\in[0,1]$,
\begin{align}
\partiald{\kappa}\psi(N,h,\gb;t,\kappa=0) &\leq \frac{\psi(N,h,\gb;t,1+M-t) - \psi(N,h,\gb;t,0)}{1+M-t}\nonumber\\
&\leq \frac{\psi(N,h,\gb;0,1+M) - \psi(N,h,\gb;t,0)}{1+M-t}\nonumber\\
&\leq \frac{\psi(N,h,\gb;0,1+M) - \psi(N,h,\gb;t,0)}{M},\label{eq:useconvexity1}
\end{align}
where for the second inequality, we used that for $x\geq 0$, $\psi(N,h,\gb;x,\kappa) \leq \psi(N,h,\gb;0,\kappa+x)$, which is a consequence of \eqref{eq:interpolation}.
Therefore, setting 
\begin{equation}
f(t) := - \psi(N,h;\gb,t,\kappa=0),\qquad
c(M) := \psi(N,h;\gb,t=0,\kappa=M+1),
\end{equation}
and using \eqref{eq:equ_partiald} and \eqref{eq:useconvexity1}, we get $\partiald{t}f(t) = \partiald{\kappa}\psi(N,h;\gb,t,\kappa=0)\leq \frac{c(M)}{M} + \frac{f(t)}{M}$, so that the derivative of $f(t)e^{-t/M}$ is uniformly bounded by $c(M)/M$.
Since $f(0)=0$, we get by integrating between $t=0$ and $t=1$ that
$f(1) \leq \frac{c(M)}{M} e^{1/M}$, that is
\begin{equation}
\label{eq:gronwall}
\psi(N,h,\gb;t=1,\kappa=0) \geq - \frac{e^{1/M}}{M} \psi(N,h;\gb,t=0,1+M).
\end{equation}
The combination of \eqref{eq:manquegronwall} and \eqref{eq:gronwall} gives Lemma \ref{lem:interpol}, because
$\psi(N,h;\gb,t=0,1+M) = \frac{1}{2N} \log \frac{\cA'_N}{\cB_N}$,
where 
\begin{align}
\cA'_N &:= \E^{\otimes 2}\bigg[ e^{2(M+1)\gb^2 \sum_{n,m=1}^N \rho_{nm} \delta_n \delta'_m + h \sum_{n=1}^N (\delta_n + \delta'_n) + \frac12 \gb^2 \sum_{n,m=1}^N \rho_{nm} (\delta_n \delta_m + \delta'_n \delta'_m)} \delta_N \delta'_N \bigg]\\
\cB_N &:= \E^{\otimes 2}\bigg[ e^{h \sum_{n=1}^N (\delta_n + \delta'_n) + \frac12 \gb^2 \sum_{n,m=1}^N \rho_{nm} (\delta_n \delta_m + \delta'_n \delta'_m)}\delta_N \delta'_N \bigg].
\end{align}

\begin{proof}[Proof of Equations (\ref{eq:interpolation}) and (\ref{eq:equ_partiald})]
We are now left with computing $\partiald{t}\psi(N,h;\gb,t,\kappa)$ and $\partiald{\kappa}\psi(N,h;\gb,t,\kappa)$. Recall the definition \eqref{eq:defEngdh} and \eqref{eq:defHngbtkappa}, to get
\begin{multline}
\partiald{t}\psi(N,h,\gb;t,\kappa) = \frac{\gb}{4\sqrt{t}N} \sum_{n=1}^N \bbE \left[ \frac{\E_{N,\beta,h}^{\otimes 2}[\omega_n(\delta_n + \delta'_n)\exp\{H^{\otimes 2}_N(\gb,t,\kappa,\omega)\}]}{\E_{N,\beta,h}^{\otimes 2}[\exp\{H^{\otimes 2}_N(\gb,t,\kappa,\omega)\}]}\right]\\
 - \frac{\gb^2}{4N}\sum_{n,m=1}^{N} \rho_{nm} \bbE \left[ \frac{\E_{N,\beta,h}^{\otimes 2}[(\delta_n\delta_m + \delta'_n\delta'_m)\exp\{H^{\otimes 2}_N(\gb,t,\kappa,\omega)\}]}{\E_{N,\beta,h}^{\otimes 2}[\exp\{H^{\otimes 2}_N(\gb,t,\kappa,\omega)\}]}\right].
\end{multline}
By the Gaussian integration by part formula, we have
\begin{multline}
\bbE \left[ \frac{\omega_n \exp\{H^{\otimes 2}_N(\gb,t,\kappa,\omega)\}}{\E_{N,\gb,h}^{\otimes 2}[\exp\{H^{\otimes 2}_N(\gb,t,\kappa,\omega)\}]}\right] =  \sum_{ m=1}^{N}  \rho_{nm} \bbE \left[ \partiald{\omega_m}\left(\frac{\exp\{H^{\otimes 2}_N(\gb,t,\kappa,\omega)\}}{\E_{N,\gb,h}^{\otimes 2}[\exp\{H^{\otimes 2}_N(\gb,t,\kappa,\omega)\}]} \right) \right]\\
= \sqrt{t}\beta  \sum_{m=1}^N \rho_{nm} \bbE \left[ \frac{\sqrt{t}\gb(\delta_m +\delta'_m)\exp\{H^{\otimes 2}_N(\gb,t,\kappa,\omega)\}}{\E_{N,\gb,h}^{\otimes 2}[\exp\{H^{\otimes 2}_N(\gb,t,\kappa,\omega)\}]}\right. \hspace{5cm}\\
 \left. - \frac{\sqrt{t}\gb \exp\{H^{\otimes 2}_N(\gb,t,\kappa,\omega)\}\E_{N,\gb,h}^{\otimes 2}[(\delta_m +\delta'_m)\exp\{H^{\otimes 2}_N(\gb,t,\kappa,\omega)\}]}{\E_{N,\gb,h}^{\otimes 2}[\exp\{H^{\otimes 2}_N(\gb,t,\kappa,\omega)\}]^2} \right].
\end{multline}
Therefore, using the notation
\begin{equation}
\langle \cdot \rangle = \langle\cdot \rangle_{N,h,\omega}^{\gb,t,\kappa} := \frac{\E^{\otimes 2}_{N,\gb,h}[\, \cdot \, \exp\{H^{\otimes 2}_N(\gb,t,\kappa,\omega)\}]}{\E^{\otimes 2}_{N,\gb,h}[\exp\{H^{\otimes 2}_N(\gb,t,\kappa,\omega)\}]} \ ,
\end{equation}
we end up with
\begin{multline}
\label{eq:derivt}
\partiald{t} \psi(N,h,\gb;t,\kappa) = \frac{\gb^2}{4N} \sum_{ n,m=1}^{ N} \rho_{nm} \bbE[ \langle(\delta_n + \delta'_n)(\delta_m + \delta'_m)\rangle -\langle\delta_n + \delta'_n \rangle\langle \delta_m + \delta'_m \rangle]\\
 \hspace{3cm}- \frac{\gb^2}{4N} \sum_{ n,m=1}^{ N} \rho_{nm} \bbE[\langle\delta_n \delta_m \rangle + \langle\delta'_n \delta'_m \rangle]\\
 = \frac{\gb^2}{2N} \sum_{ n,m=1}^{ N} \rho_{nm} \bbE[\langle\delta_n \delta'_m \rangle] - \frac{\gb^2}{N} \sum_{ n,m=1}^{ N} \rho_{nm}  \bbE[\langle\delta_n \rangle \langle\delta_m\rangle],
\end{multline}
where we used that $\langle \gd_n\rangle =\langle \gd'_n\rangle$ (by symmetry in $\gd$ and $\gd'$) to simplify the last sum.
Similarly (and more easily, no Gaussian integration by part being needed), one gets
\begin{equation}
\label{eq:derivkappa}
\partiald{\kappa} \psi(N,h,\gb;t,\kappa) = \frac{\gb^2}{2N}\sum_{n,m=1}^N  \rho_{nm} \bbE[\langle\delta_n \rangle \langle \delta_m \rangle].
\end{equation}
From \eqref{eq:derivt}-\eqref{eq:derivkappa}, one gets \eqref{eq:interpolation} since $\sum_{n,m=1}^N \rho_{nm} \langle\delta_n \rangle \langle\delta_m\rangle \geq 0$ ($\gU$ is positive semi-definite).
To obtain \eqref{eq:equ_partiald}, one simply realizes that when $\kappa =0$, $\langle \gd_n\gd'_m\rangle = \langle \gd_n \rangle \langle \gd'_m\rangle = \langle \gd_n \rangle \langle \gd_m\rangle$.
\end{proof}
\end{appendix}

\bigskip

\noindent
{\bf Acknowledgments.} 
\normalsize
QB acknowledges support by a travel grant from the Simons Foundation.
JP acknowledges the support of ERC Advanced Grant 267356 VARIS and ANR project MEMEMO2 10–BLAN–0125–03. Part of this work was carried out during the YEP conference and the following workshop on Random Polymers at EURANDOM in January 2013, and the authors want to thank TU Eindhoven for its hospitality.

\bigskip

\end{document}